\def\red{\color{red}}
\theoremstyle{plain}
\newtheorem{THEOREM}{Theorem}[section]
\newtheorem{theorem}[THEOREM]{Theorem}
\newtheorem{corollary}[THEOREM]{Corollary}
\newtheorem{lemma}[THEOREM]{Lemma}
\newtheorem{proposition}[THEOREM]{Proposition}
\theoremstyle{definition}
\newtheorem{definition}[THEOREM]{Definition}
\theoremstyle{remark}
\newtheorem{remark}[THEOREM]{Remark}
\def \a {\alpha}
\def \g {\gamma}
\def \d {\delta}
\def \k {\kappa}
\def \e {\varepsilon}
\def \l {\lambda}
\def \th {\theta}
\def \o {\omega}
\def \w {\omega}
\def \br {{\bf r}}
\def \bw {{\bf w}}
\def \bx {{\bf x}}
\def \by {{\bf y}}
\def \bz {{\bf z}}
\def \bE {{\bf E}}
\def \cF {\mathcal{F}}
\def \cM {\mathcal{M}}
\def \cR {\mathcal{R}}
\def \cW {\mathcal{W}}
\newcommand{\R}{\ensuremath{\mathbb{R}}}   
\newcommand{\C}{\ensuremath{\mathbb{C}}}   
\def \ds  {\, \mbox{d}s}
\def \ddt  {\frac{\mbox{d\,\,}}{\mbox{d}t}}
\def \Re {\mathrm{Re}}
\def \Im {\mathrm{Im}}
\def \i {\mathrm{i}} 
\title[Synchronization of coupled Stuart-Landau oscillators]{Synchronization of coupled Stuart-Landau oscillators: How heterogeneity can facilitate synchronization}
\author{Ana P Millán}
\address{Department of Electromagnetism and Matter Physics and Institute \emph{Carlos I} of Theoretical and Computational Physics, University of Granada}
\email{apmillan@ugr.es}
\author{David Poyato}
\address{Department of Applied Mathematics and Research Unit ``Modeling Nature'' (MNat), Faculty of Sciences, University of Granada, 18071 Granada, Spain}
\email{davidpoyato@ugr.es}
\author{David N Reynolds}
\address{University of Warsaw, Institute of Applied Mathematics and Mechanics}
\email{reyndn12@go.ugr.es}
\author{Francesco Tudisco}
\address{School of Mathematics and Maxwell Institute for Mathematical Sciences,
University of Edinburgh; and Gran Sasso Science Institute}
\email{f.tudisco@ed.ac.uk}
\begin{document}

\maketitle

\begin{abstract}
We study the collective dynamics of coupled Stuart--Landau oscillators, which model limit-cycle behavior near a Hopf bifurcation and serve as the amplitude-phase analogue of the Kuramoto model. Unlike the well-studied phase-reduced systems, the full Stuart--Landau model retains amplitude dynamics, enabling the emergence of rich phenomena such as amplitude death, quenching, and multistable synchronization. We provide a complete analytical classification of asymptotic behaviors for identical natural frequencies, but heterogeneous inherent amplitudes in the finite-$N$ setting. In the two-oscillator case, we classify the asymptotic behavior in all possible regimes including heterogeneous natural frequencies and inherent amplitudes, and in particular we identify and characterize a novel regime of \emph{leader-driven synchronization}, wherein one active oscillator can entrain another regardless of frequency mismatch. For general $N$, we prove exponential phase synchronization under sectorial initial data and establish sharp conditions for global amplitude death. Finally, we analyze a real-valued reduction of the model, connecting the dynamics to nonlinear opinion formation and consensus processes. Our results highlight the fundamental differences between amplitude-phase and phase-only Kuramoto models, and provide a new framework for understanding synchronization in heterogeneous oscillator networks.
\end{abstract}

\section{Introduction}

The Stuart--Landau (SL) coupled oscillatory system 
\begin{equation}\label{eq:SL1}
    \dot{z}_j = (\alpha_j + i\omega_j - |z_j|^2)z_j + \frac{\kappa}{N} \sum_{l=1}^N (z_l - z_j), \quad j = 1, \dots, N.
\end{equation}
provides a canonical model for a limit-cycle oscillator near a Hopf bifurcation, and plays a central role in the derivation of the celebrated Kuramoto model of synchronization~\cite{Kur}. Despite this, the full coupled SL oscillator system, retaining both amplitude and phase dynamics, has received comparatively limited analytical attention in the mathematical literature. Yet, SL oscillators appear widely in physics and neuroscience, modeling mesoscopic brain activity and spatially extended nonlinear systems \cite{DN1,DN2,Muñ,vNMD,PD,Sahoo,SETKD,Stam}.

Kuramoto derived his model of coupled phase oscillators from \eqref{eq:SL1} via a phase reduction where oscillators are assumed to never leave a common limit-cycle and therefore the coupled dynamics of phases and amplitudes just reduces to the dynamics of phases \cite{Kur}. The resulting Kuramoto model has become one of the most studied paradigms for synchronization, with a vast literature spanning mathematics \cite{ABPRS,CCHKK,Chiba,BK,DV,DF,DFG,HKR,HR,MP,PPS}, physics \cite{MTB,MS,Str}, and engineering \cite{FNP,JSP,NHSNNN}. However, the original SL system of coupled amplitude-phase oscillators has not received the same degree of analytical scrutiny. Notable exceptions include the work in~\cite{PLE}, which proved a practical synchronization result (i.e., synchronization in the limit $\kappa \to \infty$) over heterogeneous networks, and~\cite{MS}, which analyzed the system in the mean-field limit $N \to \infty$ under the assumption the assumption of amplitude homogeneity (i.e., $\alpha_j=1$ for all $j=1,\ldots,N$). Other studies have examined variations of the SL model \eqref{eq:SL1} with repulsive coupling~\cite{WZ}, star network structures~\cite{CCSL,CLLZ}, or alternative coupling schemes. Nevertheless, the foundational ODE system \eqref{eq:SL1} with general parameters in finite dimension remains largely unexplored.

This paper presents the first comprehensive study of the collective dynamics of the finite-$N$ SL system \eqref{eq:SL1}. 
We analyze the system under heterogeneous parameters $\alpha_j$ and $\omega_j$, with a particular focus on the effect of amplitude heterogeneity. Our contributions include:

\begin{itemize}
\item Complete classification of asymptotic states in the $N=2$ case across all possible values of the parameters $\alpha_j$, $\omega_j$ (both homogeneous and heterogeneous) and $\kappa$ (both small and large). We showcase the different regimes, including active/inactive and coherent/incoherent states.
\item Classification of asymptotic states for arbitrary $N\geq 2$ in the case of identical natural frequencies $\omega_j=0$, heterogeneous inherent amplitudes $\alpha_j$, and general $\kappa$. We show that sectorial solutions tend to coherent states. More specifically, we prove the exponential phase synchronization of these sectorial and we give conditions for the convergence to active/inactive states.
\end{itemize}

In doing so, we find new phenomena that was not found in the cases analyzed in \cite{PLE,MS}. In particular, we remark the following two new findings:

\begin{itemize}
\item {\it Leader-driven synchronization}. In the case of $N=2$ oscillators, we characterize a novel regime of leader-driven synchronization, where a single supercritical oscillator entrains the other regardless of the frequency mismatch.

\item {\it Nonlinear opinion dynamics model}. In the case $N\geq 2$, with homogeneous natural frequencies, we find a reduction of the SL model on the complex plane to a novel nonlinear opinion dynamics model on the real line where the inherent amplitudes model the effect of stubbornness of individuals with heterogeneous inherent opinions.
\end{itemize}

These results demonstrate new forms of synchronization and phase transitions in the SL model beyond those observed in phase-reduced models such as the Kuramoto model, and they also extends the partial results in \cite{PLE, MS}. Our analysis reveals parameter-dependent bifurcations, discontinuities, and transitions to amplitude death and coherent motion. The findings also clarify the role of amplitude dynamics in shaping global synchronization.

The remainder of the paper is structured as follows. Section~\ref{sec:preliminaries} reviews background material and provides an overview of the main results of the paper, which are then expanded and proved in the subsequent sections. Section~\ref{S:twin} treats the homogeneous two-oscillator case and proves Theorem~\ref{t:N=2,a=0}. Section~\ref{S:ahet} explores all heterogeneous two-oscillator regimes, establishing the structure seen in Figures~\ref{fig:Kur2a2<a1<0}--\ref{fig:Kur2a1>a2>0}. 
Section~\ref{S:hom} proves Theorem~\ref{t:N>2synch} for $N \geq 2$ oscillators with amplitude heterogeneity and zero frequency heterogeneity. Section~\ref{S:opinion} presents Theorem~\ref{t:opinion1} for the real-valued reduction modeling nonlinear opinion dynamics.

\section{Preliminaries and Statement of Results} \label{sec:preliminaries}

The equation for a single Stuart--Landau oscillator is
\begin{equation}\label{SLo}
    \dot{z}(t)=\left(\alpha+\i\omega-|z(t)|^2\right)z(t),
\end{equation}
where $z(t)\in \C, \a,\o\in\R$ and $\i=\sqrt{-1}$ denotes the imaginary unit. For $z(t)=r(t)e^{\i\phi(t)}$, the equations for the magnitude $|z(t)|=r(t)$ and phase $\phi(t)$ of the oscillator are given by
\begin{align}
    \ddt r(t)&=\left(\a-r^2(t)\right)r(t),\\
    \ddt \phi(t)&=\omega.
\end{align}
From these equations, we notice that for $\a>0$ the oscillator converges to a stable limit cycle with amplitude $\sqrt{\a}$ and speed $\omega$. For this reason, we call $\a$ the inherent amplitude (or Hopf parameter) and $\omega$ the natural frequency. Meanwhile, for $\a<0$, the oscillator is subcritical and undergoes asymptotic amplitude death as $r \to 0$.

The coupled SL system is given by
\begin{equation}\label{eq:SL}
    \dot{z}_j=(\alpha_j+\i\omega_j-|z_j|^2)z_j+\frac{\kappa}{N}\sum_{l=1}^N(z_l-z_j),
\end{equation}
for coupling strength $\k\geq 0$, inherent amplitudes $\a_j \in \R$, natural frequencies, $\w_j\in \R$, and finitely many oscillators $j=1,...,N$. Using polar coordinates $z_j=r_je^{{\rm i}\phi_j}$ the system can be reformulated as the following coupled system for the phases $\phi_j$ and amplitudes $r_j$:
\begin{equation}\label{eq:SL-split}
\begin{aligned}
&\dot{r}_j=(\alpha_j-r_j^2)r_j+\frac{\kappa}{N}\sum_{l=1}^N(r_l\cos(\phi_l-\phi_j)-r_j),\\
&\dot{\phi}_j=\omega_j+\frac{\kappa}{N}\sum_{\ell=1}^N\frac{r_l}{r_j}\sin(\phi_l-\phi_j).
\end{aligned}
\end{equation}
In this form, the coupled SL system \eqref{eq:SL-split} can be regarded as a Kuramoto model over a heterogeneous and dynamical network, whose weights $\frac{r_l}{r_l}$ co-evolve with the phases $\phi_j$. See \cite{BGKKY-23,BVSY-21,HNP-16} and references therein for related literature about models of coupled oscillators on adaptive networks.

In order to study this fundamental ODE system and investigate the role each parameter plays in leading to synchronous outcomes, we start off by introducing the next two notions of heterogeneity for the system.

\begin{definition}[Amplitude and Frequency Heterogeneity]
    Letting $\a_j \in \R$, a measure of inherent amplitude heterogeneity is given by
    \begin{align}
        a=\max_{i,j=1,...,N}|\a_i-\a_j|.
    \end{align}
    Similarly, letting $\w_j \in \R$, a measure of heterogeneity within natural frequencies is defined as
    \begin{align}
        \g=\max_{i,j=1,...,N}|\w_i-\w_j|.\\ \nonumber
    \end{align}
\end{definition}

Historically, synchronous behavior has been linked with the homogeneity of oscillators. Indeed, the Kuramoto model is derived in a limiting regime where $a \equiv 0$. Within this regime, the relationship between coupling strength $\k$ and natural frequency heterogeneity $\g$ drives whether the system converges to a phase-locked state or not. In particular, for $\k>\k^*(\g)$, the Kuramoto system enjoys a convex gradient flow structure such that a large class of initial data converges to the phase-locked state. The SL system does not enjoy such a structure, and as will be seen in Section \ref{S:hom}, a perturbed gradient flow structure can only be recovered in the case of natural frequency homogeneity $(\g=0)$, and even still, it will be a nonconvex flow.

The main contribution of this paper is to highlight the effects of introducing amplitude heterogeneity $(a>0)$ into the Stuart-Landau system \eqref{eq:SL}. To this end, a major focus of the paper will be the case of $N=2$ oscillators. By varying the parameters $\k,a,\g,\a_j$ we can observe various continuous and discontinuous phase transitions. Of particular interest will be a new region for which phase-locking occurs for intermediate values of $\k$ regardless of the natural frequency heterogeneity $\g$. We call this phenomenon \textit{leader-driven synchronization}.

We introduce here two useful functions for observing the synchrony of the system.

\begin{definition}[Phase Difference and Average]
    Letting $\phi_j \in [-\pi,\pi)$, we define the relative phase difference by
    \begin{align}
        \Phi_{jk}=\phi_j-\phi_k
    \end{align}
    and in particular for $N=2$ we drop the indices and refer to this as
    \begin{align}
        \Phi=\phi_1-\phi_2.
    \end{align}
    Similarly, we define the phase average as
    \begin{align}
        \Psi=\frac{1}{N}\sum_{j=1}^N\phi_j.\\
        \nonumber
    \end{align}
\end{definition}

A quantity of further use with regard to the amplitude values is the ratio of amplitude variables.
\begin{definition}[Amplitude Ratios]
    Letting $r_j(t)=|z_j(t)|$, let the amplitude ratios be defined as
    \begin{align}
        R_{jk}=\frac{r_j}{r_k},
    \end{align}
    while for the case $N=2$, we drop the indices and refer to the ratio as
    \begin{align}
        R=\frac{r_1}{r_2}.
    \end{align}
\end{definition}

Last, in order to understand the asymptotic states of the system, we will define the four phenomena we witness when $N=2$, although we state the definitions for general $2\leq N<\infty$. Due to the reliance on both amplitude and phases, there are two potential states that stem from the analysis of the amplitudes, and two that stem from the phases.

\begin{definition}[Active State versus Amplitude Death]
The following two states are observed via the analysis of the amplitude variables $|z_j|=r_j$:

  \begin{itemize}
      \item We say that the system tends to Amplitude Death if for all $j=1,...,N$ each $r_j \to 0.$
      \item On the other hand, we say the oscillators remain in an Active State if again for all $j=1,...,N$, each $r_j \not\to 0$.
  \end{itemize}  
\end{definition}

\begin{definition}[Phase-Locking versus Incoherence (Periodic Orbit)]
The following two states are observed via the analysis of the phase variables $\Phi_{jk}$, $\Psi$:

  \begin{itemize}
      \item We say that the system tends to Phase-Locking if for all $j,k=1,...,N$ each $\Phi_{jk} \to c_{jk}$, a fixed constant.
      \item On the other hand, we say the oscillators have Incoherent dynamics if Phase-locking does not occur: $\Phi_{jk}\not\to c_{jk}$ for some fixed constant. In this case, for $N=2$, at times we can show that this state is further representative of a Periodic orbit characterized by $\Psi\to 0$ while $\ddt \Phi>0$.
  \end{itemize}  
\end{definition}
With all the relevant definitions in hand the rest of this section will be devoted to stating the main results of the paper.

\subsection{Two oscillator results}
In order to highlight the effects of heterogeneity, we begin with the homogeneous case of $N=2$ and $a=0$ with $\a_1=\a_2=\a\leq0$. 
\begin{align}
    \ddt z_1=(\a+\i\w-|z_1|^2)z_1+\frac{\k}{2}(z_2-z_1), \label{eq:N=2,a=0,1}\\
    \ddt z_2=(\a-\i\w-|z_2|^2)z_2+\frac{\k}{2}(z_1-z_2) \label{eq:N=2,a=0,2},
\end{align}
where we choose $\w=\w_1=-\w_2\geq 0$ for the rotational invariance of the system.

Due to $\a\leq0$, the asymptotic outcome will always be amplitude death at an exponential rate (algebraic if $\a=0$), however depending on the relationship between $\k$ and $\g$, the phases of the system can tend towards a periodic orbit, or a phase-locked state, throughout the convergence to amplitude death.

The phase diagram for \eqref{eq:N=2,a=0,1}-\eqref{eq:N=2,a=0,2}, with $\a\leq0$ can be seen in Figure \ref{fig:Kur2a<0}.

The Hopf parameter, $\a$, represents the desired amplitude of each oscillator. As is the case for an individual Stuart-Landau oscillator, $\a\leq 0$ yields a stable fixed point at $z_j=0$. There is a bifurcation as $\a$ passes from negative to positive where stable active states $(z_j\neq 0)$ arise depending on the parameters $\k$ and $\g$.

\begin{figure}[t]
    \centering
    \includegraphics[width=12cm]{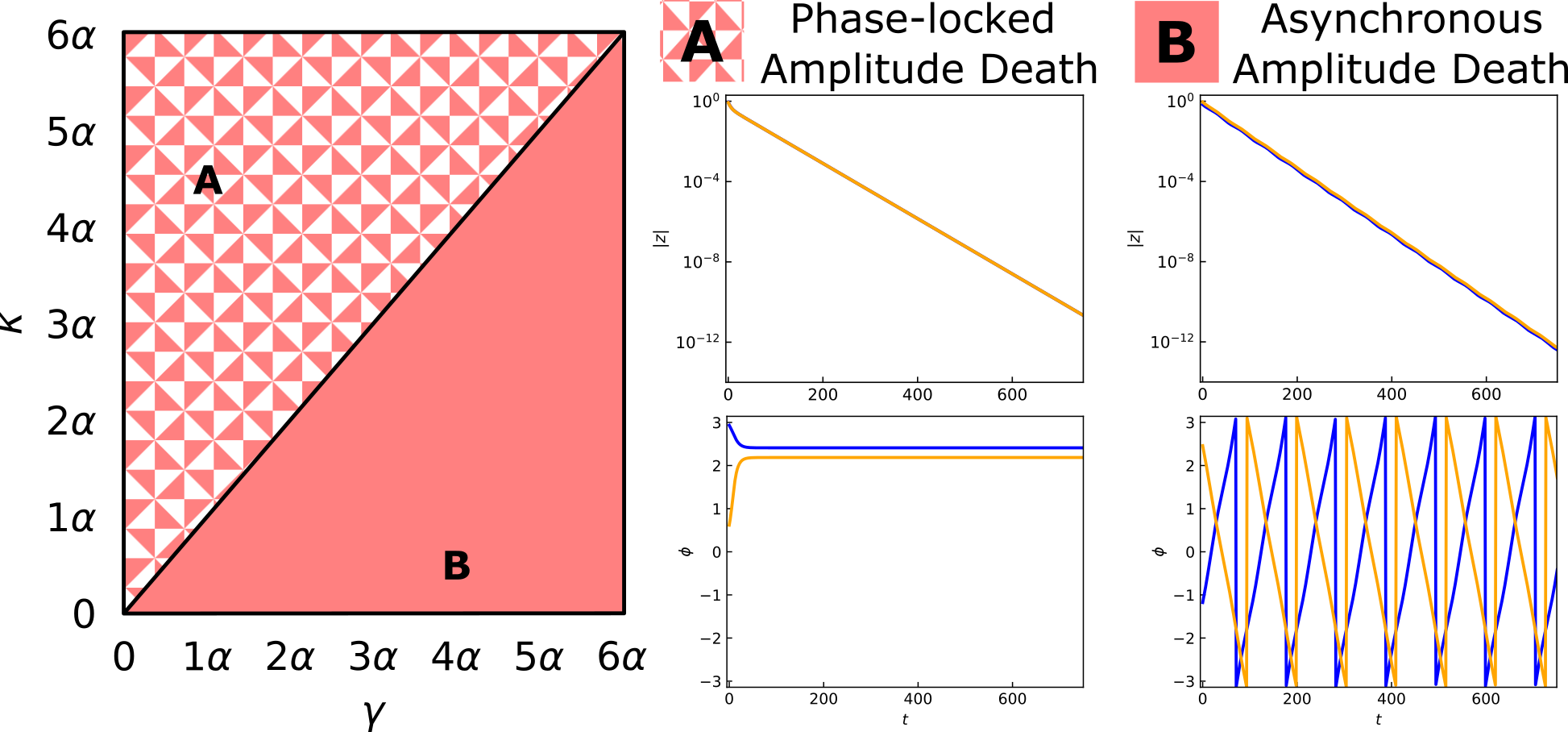}
    \caption{Phase diagram for the Stuart-Landau system with $N=2$ oscillators with identical subcritical Hopf parameter $\a_1=\a_2=\a<0$. The line $\k=\g$ is the onset of synchronization where along this line convergence to phase-locking occurs at an algebraic rate, above the line (Tessellated pattern) phase-locking occurs at an exponential rate, and below the line (Solid) the system tends towards a periodic orbit. As $\a\leq0$ both oscillators tend to Amplitude Death (Red) at an exponential rate (algebraic at $\a=0$). The letters A and B signify the dynamics for the particular choice of parameters in the phase diagram. The first row shows the amplitude death behavior as $|z|\to 0$. The second row shows the phases of each oscillator $\phi_j$ in the first case where Phase-Locking occurs, and the second giving periodic motion.
    }
    \label{fig:Kur2a<0}
\end{figure}

The asymptotic outcomes for model \eqref{eq:N=2,a=0,1}-\eqref{eq:N=2,a=0,2} with $\a>0$ are seen in the phase-diagram Figure \ref{fig:Kur2a>0}. The following theorem formalizes this for all $\a\in \R$.

\begin{theorem}\label{t:N=2,a=0}
Let $N=2$, $\a_1=\a_2=\a$, with $\w_1=-\w_2=\w\geq 0$. Then for any parameter configuration $\a \in \R,\k>0,\g\geq 0$ the following represents an invariant manifold:
\begin{align}
   \cM= \{z_1(t),z_2(t) \in \C: R(t)=\frac{r_1}{r_2}=\frac{|z_1|}{|z_2|}=1\}.
\end{align}
Furthermore, for any initial data $\{z_j(0)\}_{j=1}^2\in \cM$, solutions to \eqref{eq:N=2,a=0,1}-\eqref{eq:N=2,a=0,2}  converge to one of the 4 types of asymptotic states seen in Figures \ref{fig:Kur2a<0}-\ref{fig:Kur2a>0} depending on the parameter configuration of $(\a,\k,\g)$. For $\a>0$, the system remains Active for all $\k,\g<2\a$ while if $\k>2\a$ there is an Active/Amplitude Death curve $\k^*(\g)$ for which the active state is recovered above this curve. It is given by
\begin{align}\label{eq:AAcurve1}
    \k^*(\g)=\frac{4\a^2+\g^2}{4\a},  \ \g \in (2\a,\infty).
\end{align}
    If the system parameters lie above the curve $\k^*(\g)$, then the asymptotic state is Active-Phase-Locking. If the parameters are found in between $\k^*(\g)$ and above the line $\k=\g$, then the asymptotic state is Amplitude Death-Phase-Locking. If $2\a<\k<\g$, then the asymptotic state is Amplitude Death-Incoherence. The remaining parameter configuration gives Active-Incoherence. For $\a\leq 0$, the phase transition from periodic to phase-locked always occurs at $\k=\g$, while Amplitude Death always occurs at an exponential rate (algebraic at $\a=0)$.

    The manifold $\cM$ is stable and thus so are the asymptotic states derived in Figures \ref{fig:Kur2a<0}-\ref{fig:Kur2a>0}. Furthermore, for $\k>2\a$, convergence to asymptotic states for a.e. set of initial data is guaranteed.
    
\end{theorem}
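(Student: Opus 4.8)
\section*{Proof proposal}

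The plan is to pass to polar coordinates and exploit the symmetry $\w_1=-\w_2$ to collapse the system onto $\cM$, where it becomes essentially explicit. Writing $z_j=r_je^{\i\phi_j}$, the split system \eqref{eq:SL-split} for $N=2$ reads $\dot r_1=(\a-r_1^2)r_1+\tfrac{\k}{2}(r_2\cos\Phi-r_1)$ and $\dot r_2=(\a-r_2^2)r_2+\tfrac{\k}{2}(r_1\cos\Phi-r_2)$, with $\Phi=\phi_1-\phi_2$. First I would record the evolution of the amplitude difference $D=r_1-r_2$: a direct computation gives $\dot D=D\big[\a-(r_1^2+r_1r_2+r_2^2)-\tfrac{\k}{2}(1+\cos\Phi)\big]$. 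Since the bracket multiplies $D$, the set $\{D=0\}=\cM$ is invariant, which is the first assertion. The same conclusion follows conceptually from the conjugation symmetry $(z_1,z_2)\mapsto(\bar z_2,\bar z_1)$, whose fixed-point set is $\cM$.

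On $\cM$ we have $r_1=r_2=:r$ and the dynamics decouples into a phase part and an amplitude part. The phase difference obeys the Adler equation $\dot\Phi=\g-\k\sin\Phi$ (using $\g=\w_1-\w_2=2\w$), which is classical: for $\k>\g$ there is a stable equilibrium $\Phi^*$ with $\sin\Phi^*=\g/\k$ and $\cos\Phi^*=\sqrt{1-\g^2/\k^2}>0$, giving phase-locking; for $\k<\g$ the right-hand side has no zero, $\Phi$ is monotone and periodic with period $2\pi/\sqrt{\g^2-\k^2}$, giving incoherence; and $\k=\g$ is the degenerate borderline where $\Phi\to\pi/2$ algebraically. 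This identifies the line $\k=\g$ as the phase-locking threshold.

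The key observation for the amplitude is that along $\cM$ the variable $r$ solves the Bernoulli-type equation $\dot r=r(\beta(t)-r^2)$ with $\beta(t)=\a-\tfrac{\k}{2}(1-\cos\Phi(t))$, and the substitution $w=r^{-2}$ linearizes it to $\dot w=-2\beta(t)w+2$, which can be solved explicitly. In the phase-locked regime $\beta(t)\to\beta^*:=\a-\tfrac{\k}{2}(1-\cos\Phi^*)$, so $w\to1/\beta^*$ when $\beta^*>0$ (Active, $r\to\sqrt{\beta^*}$) and $w$ diverges when $\beta^*\le0$ (Amplitude Death, exponentially if $\beta^*<0$ and algebraically if $\beta^*=0$). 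Setting $\beta^*=0$ and eliminating $\Phi^*$ yields exactly the curve $\k^*(\g)=(4\a^2+\g^2)/(4\a)$, and since $\beta^*$ is increasing in $\k$ one gets Active above the curve and Death below, down to the line $\k=\g$. In the incoherent regime one computes $\langle\cos\Phi\rangle=0$ over one period (the integral $\int_0^{2\pi}\cos\Phi\,(\g-\k\sin\Phi)^{-1}d\Phi$ vanishes), so the Floquet multiplier of the homogeneous part of the $w$-equation is $e^{-2(\a-\k/2)T}$; hence $r$ tends to a nontrivial periodic orbit when $\k<2\a$ (Active) and to $0$ when $\k>2\a$ (Death), with $\k=2\a$ giving algebraic decay. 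Assembling these four cases reproduces the stated classification, and the degeneracy of $\k^*(\g)$ for $\a\le0$ (where $\beta^*\le0$ always) gives universal amplitude death.

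It remains to upgrade invariance of $\cM$ to stability and a.e.\ convergence, and this is where the main difficulty lies, since off $\cM$ one must control the full three-dimensional system in $(r_1,r_2,\Phi)$. The natural route is to return to $\dot D=D[\,\a-(r_1^2+r_1r_2+r_2^2)-\tfrac{\k}{2}(1+\cos\Phi)\,]$ and show the bracket is negative along the relevant trajectories: near amplitude death the cubic terms vanish and the bracket averages to $\a-\tfrac{\k}{2}(1+\langle\cos\Phi\rangle)\le\a-\k/2<0$ exactly when $\k>2\a$ --- explaining that hypothesis --- so $D\to0$; on active states one instead uses that $r_1,r_2$ are bounded away from $0$, whereupon the same computation on $\cM$ gives bracket value $-2\beta^*-\k\cos\Phi^*<0$. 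The hard part is making this transverse-stability argument uniform across the incoherent regime, where $\cos\Phi(t)$ oscillates and one cannot linearize at a fixed point; I expect to handle it via a Floquet/averaging estimate on the variational equation for $D$, combined with the explicit $w$-dynamics to control the amplitudes, and to dispose of the measure-zero exceptional set (the unstable manifold of the death/incoherent configurations and the antipodal phase equilibrium with $\cos\Phi<0$) separately.
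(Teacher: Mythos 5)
Your on-manifold analysis is correct and in places cleaner than the paper's: the invariance computation via $D=r_1-r_2$ (the paper uses the $R$-equation instead), the Adler reduction $\dot\Phi=\g-\k\sin\Phi$, and the four-way classification all match. In particular, the Bernoulli substitution $w=r^{-2}$, which linearizes the amplitude equation to $\dot w=-2\beta(t)w+2$, together with the exact identity $\int_0^{2\pi}\cos\Phi\,(\g-\k\sin\Phi)^{-1}\,d\Phi=0$, makes rigorous what the paper only argues heuristically via a ``time-averaged Hopf parameter'' with $\langle\cos\Phi\rangle=0$; the curve $\k^*(\g)=(4\a^2+\g^2)/(4\a)$ comes out the same way in both arguments.

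The genuine gap is in the stability of $\cM$ and the a.e.\ convergence, which you flag as the hard part but do not close, and your choice of transverse variable is what makes it hard. The paper works with $l=r_1^2-r_2^2$ rather than $D=r_1-r_2$: in $\ddt l=l\bigl(2\a-\k-2(r_1^2+r_2^2)\bigr)$ the $\cos\Phi$ coupling cancels identically, so for $\k>2\a$ the bracket is bounded by $2\a-\k<0$ pointwise and $l\to0$ exponentially by Gr\"onwall with no averaging whatsoever; your $D$-equation retains the term $\tfrac{\k}{2}(1+\cos\Phi)$ and forces you into precisely the Floquet/averaging problem you leave open. Moreover, even granted $D\to0$ (or $l\to0$), in the amplitude-death regimes this says nothing about $R=r_1/r_2$, since both amplitudes vanish; the paper resolves this, and obtains the a.e.\ convergence for $\k>2\a$, by passing to the quotient $Z=z_1/z_2$, which satisfies the complex Riccati equation $\ddt Z=\tfrac{\k}{2}\bigl(1+\tfrac{2}{\k}(\i\g-l)Z-Z^2\bigr)$ — explicitly solvable once $l\to0$, with one attracting and one repelling equilibrium for $\k>\g$ and periodic behavior for $\k<\g$. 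Your sketch introduces no such quotient variable, so the convergence of $R$ and $\Phi$ through amplitude death is unproved. Finally, in the remaining regime $\k<2\a$, $\k<\g$, the needed estimate $\langle\cos\Phi\rangle\ge0$ off the manifold is exactly where the paper itself resorts to a delicate inflection-point argument; your proposal would need to supply something comparable rather than defer it.
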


The behavior of model \eqref{eq:N=2,a=0,1}-\eqref{eq:N=2,a=0,2} is similar to the behavior of the Kuramoto model of phase synchronization with $N=2$ oscillators. Indeed, the same onset of synchronization at $\k=\g$ occurs with an algebraic convergence rate occurring at the phase-transition. However, without the phase-reduction which yields the Kuramoto model, the amplitude parameter $\a \in \R$ plays a role in whether or not the oscillators remain active $r_j(t)>0$, or converge to amplitude death $r_j \to 0$. Of interesting note is that even in amplitude death regimes, the phase behavior can still be recovered, giving rise to both periodic phase behavior and phase-locking, depending on the parameters.

\begin{figure}[t]
    \centering
    \includegraphics[width=12cm]{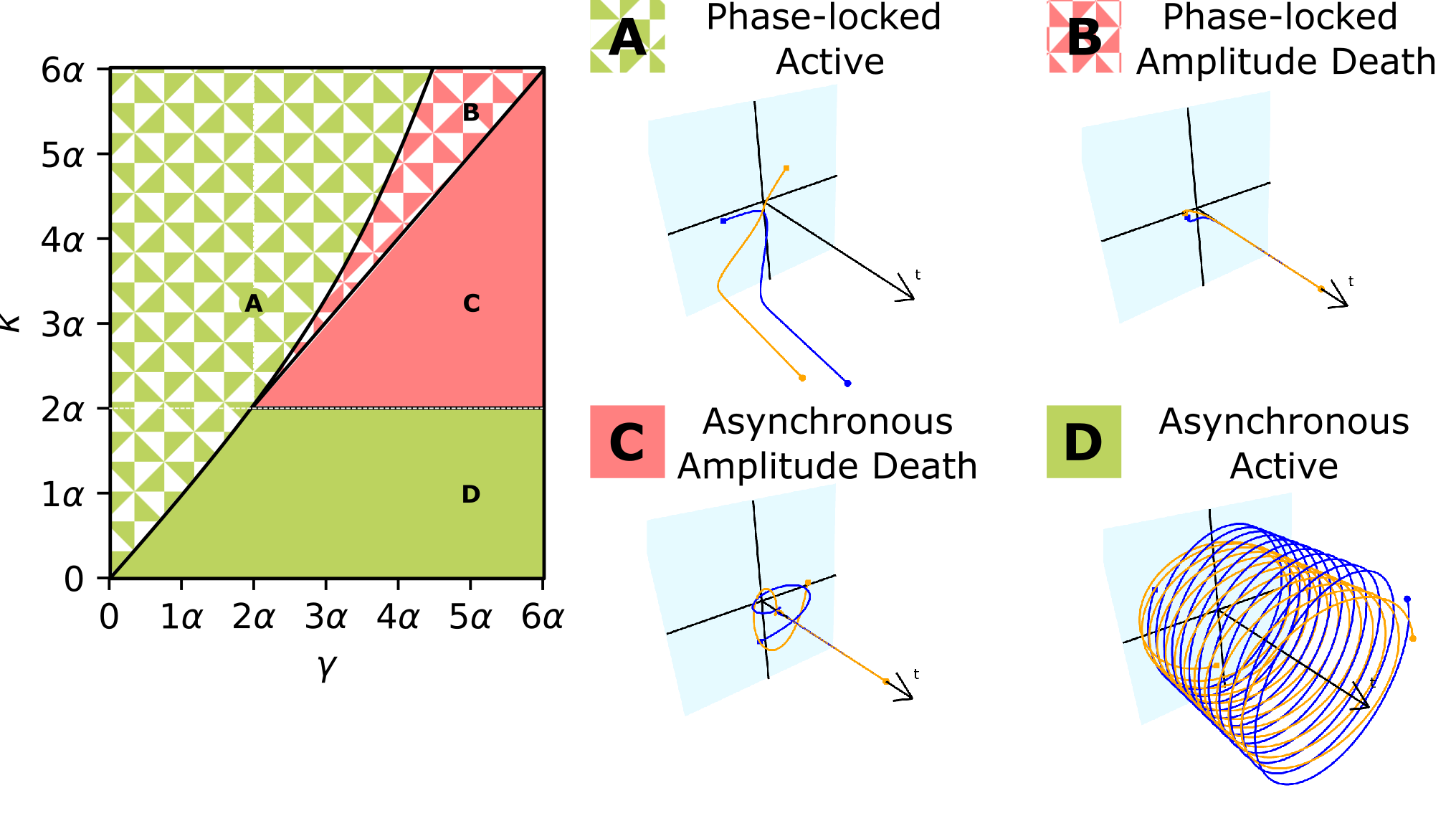}
    \caption{Phase diagram for the Stuart-Landau system with $N=2$ oscillators with identical supercritical Hopf parameter $\a_1=\a_2=\a>0$. The line $\k=\g$ is the onset of synchronization where along this line convergence to phase-locking occurs at an algebraic rate, above the line (Tessellated) phase-locking occurs at an exponential rate, and below the line (Solid) the system tends towards a periodic orbit. The curve $\k^*(\g)$ as defined in \eqref{eq:AAcurve1}, and the line $\k=2\a$ for $\g>2\a$, determine whether or not oscillators remain Active (Green) or tend to Amplitude Death (Red). The letters A,B,C,D signify a particular
choice of parameters in the phase diagram for which the dynamics are shown. }
    \label{fig:Kur2a>0}
\end{figure}

The phase-reduction process which achieves the Kuramoto model in \cite{Kur} removes the dependence upon the amplitudes of the oscillators. This, of course, is not a requirement and even in the simple case of Theorem \ref{t:N=2,a=0} we can see the effects of retaining the amplitude dependence of the Stuart-Landau system. In order to study further the effects of amplitude dependence, we introduce heterogeneity into the parameters $\a_j$. In the $N=2$ case, in order to investigate all possible configurations we begin with the $\a_1=\a_2<0$ case seen in Figure \ref{fig:Kur2a<0}. From there we leave $\a_2$ fixed and begin to increase $\a_1$. This first increase provides a discontinuous bifurcation within the system. The incoherent state seen in Figure \ref{fig:Kur2a<0} disappears entirely and the only asymptotic state is given by Amplitude Death and Phase-Locking for all $\k>0$, $\g\geq0$, as seen in Figure \ref{fig:Kur2a2<a1<0}.  

\begin{figure}[t]
    \centering
    \includegraphics[width=12cm]{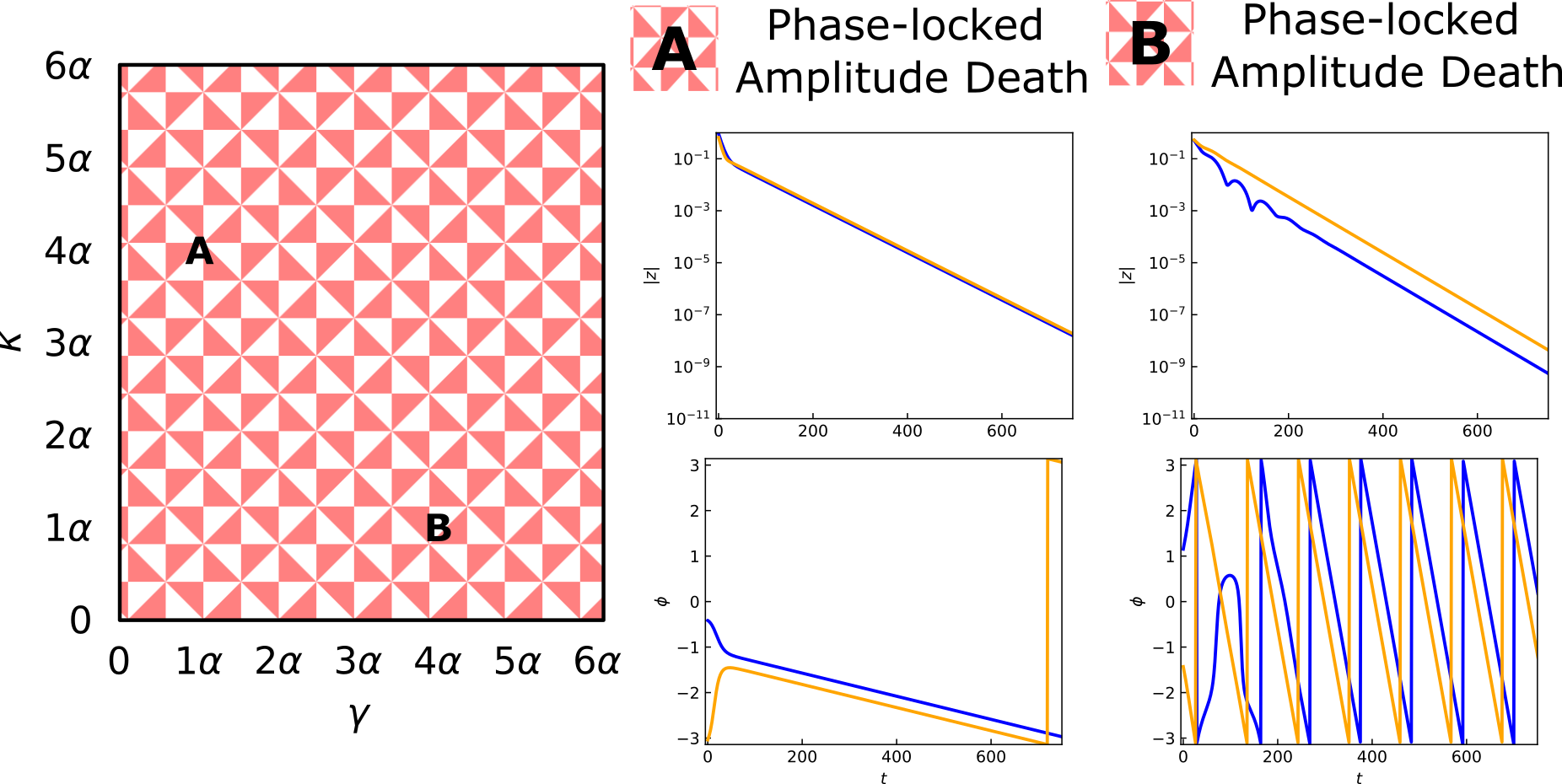}
    \caption{Phase diagram for the Stuart-Landau system with $N=2$ oscillators with nonidentical subcritical Hopf parameters $\a_2<\a_1\leq0$. The only asymptotic state is Phase-Locked--Amplitude Death (Tessellated, Red). The dynamics for the two points A and B are given where both lead to Amplitude Death and Phase-Locking, but the increased rotation speed in B can be seen.
}
    \label{fig:Kur2a2<a1<0}
\end{figure}

Continuing to increase $\a_1$ until $\a_1>0$ yields another phase transition. Once $\a_1>0$, the possibility of Active States is recovered, while still the phase behavior remains Phase-Locked. The curve which separates the Active and Amplitude Death States within Figure \ref{fig:Kur2-a2>a1>0} can be computed explicitly and is given by
\begin{align}\label{intro:curve}
     f(\a_1,\a_2,\k,\g):=\a_1+\a_2-\k+\sqrt{\frac{1}{2}(\sqrt{4a^2\g^2+(a^2-\g^2+\k^2)^2}+a^2-\g^2+\k^2)}=0, \ \ \ \k>0,\g\geq0
\end{align}
and within this regime it can be shown that
\begin{align}
    f(\a_1,\a_2,\k,0) \implies \k^*(0)&=\frac{2\a_1\a_2}{\a_1+\a_2}, \label{eq:2asym}
\end{align}
while for $\g \to \infty$ there is a horizontal asymptote at $\k=2\a_1$. Indeed, this implies that for any $\k<2\a_1$, and any $\g>0$ there exists a unique stable phase-locked state. We dub this phenomenon \textit{leader-driven synchronization} due to the fact that the larger oscillator (corresponding to $\a_1>0$) usurps the dynamics so that both oscillators oscillate close to the natural frequency $\w_1$, with $r_1^2 \sim \a_1$ and $r_2 \sim 0$. Within this regime, letting $\g \to \infty$, we see $r_1^2\to \a_1$ and $r_2\to 0$ which makes the second oscillator experience the phenomena known as Quenching \cite{KVK}. This is in direct opposition to what happens in oscillatory models, which have homogeneity of amplitudes, whether by assumption \cite{MS} or via a phase-reduction \cite{Kur}. 
\begin{figure}[t]
    \centering
    \includegraphics[width=12cm]{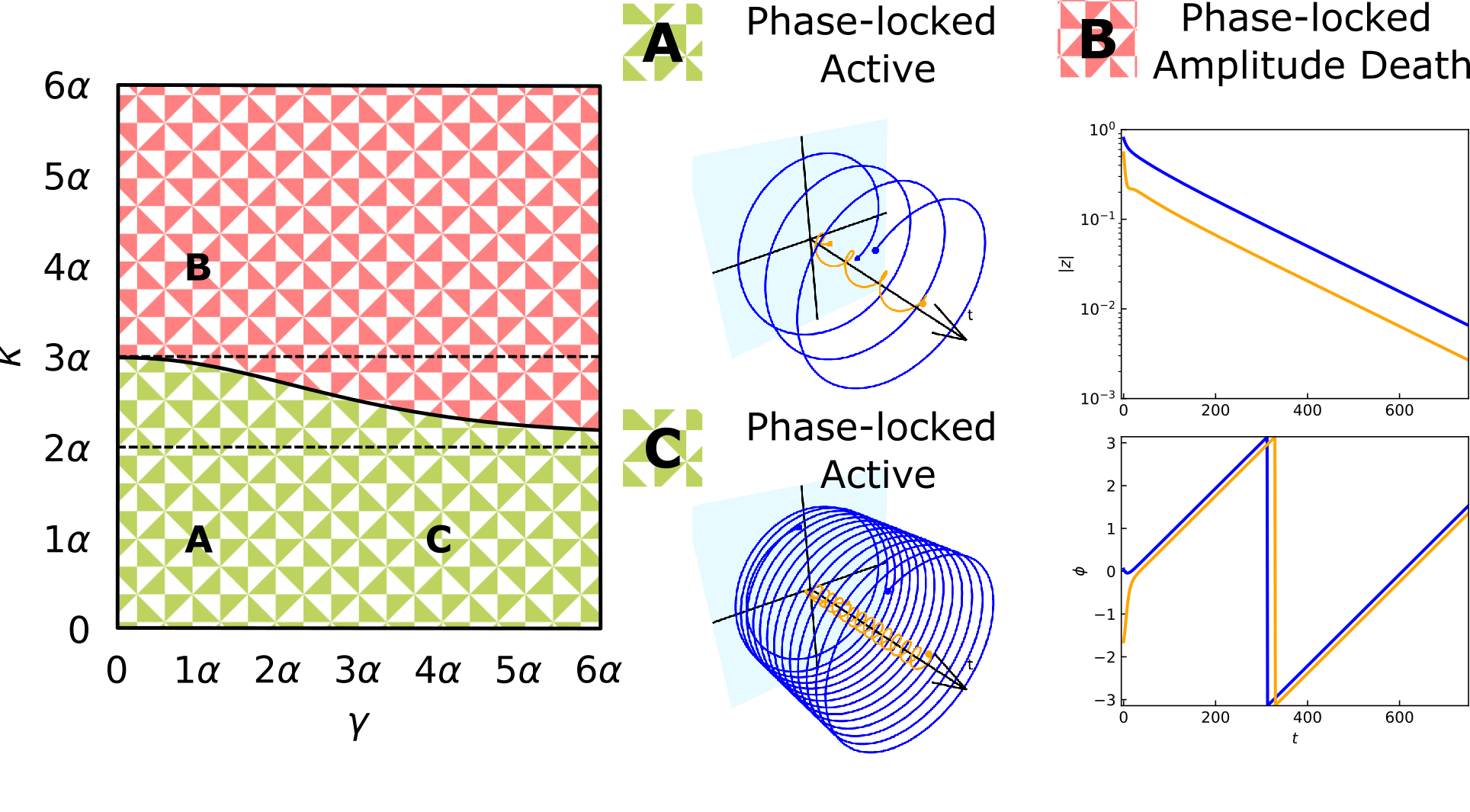}
    \caption{Phase diagram for the Stuart-Landau system with $N=2$ oscillators with nonidentical, on average subcritical, Hopf parameters $-\a_2>\a_1>0$. The phase behavior remains Phase-Locked (Tessellated), while the Active state (Green) is recovered for low coupling values such that $(\k,\g)$ is below the curve $f(\a_1,\a_2,\k,\g)=0$, and Amplitude Death (Red) above the curve. The three points A,B,C give dynamics for those particular parameter configurations.
    Notably one can see the increased rotation speed of the Phase-Locked states when comparing C (small $\k$, large $\g$) to A (small $\k$, small $\g$). Point B highlights the ability to track the phase-locking behavior through amplitude death.}
    \label{fig:Kur2-a2>a1>0}
\end{figure}

Continuing to increase $\a_1$ until $\a_1=-\a_2>0$ provides the next transition. The formulation for the Amplitude Death transition \eqref{intro:curve} remains the same but at this value we see that the domain in $\g$ for the curve is restricted to $\g>2\a_1$.
From \eqref{eq:2asym} we can see that as $\a_1\nearrow -\a_2$ the value $\k^*(0)\to \infty$. Thus, within Figure \ref{fig:Kur2a1=-a2>0}, the symmetry of $\a_1=-\a_2$ provides a vertical asymptote at $\g=2\a_1$ and a horizontal asymptote at $\k=2\a_1$. Therefore, the \textit{leader-driven synchronization} regime is preserved for $\k<2\a_1$, and a new Active Phase-Locked regime arises for weak natural frequency heterogeneity $\g<2\a_1$.

\begin{figure}[t]
    \centering
    \includegraphics[width=12cm]{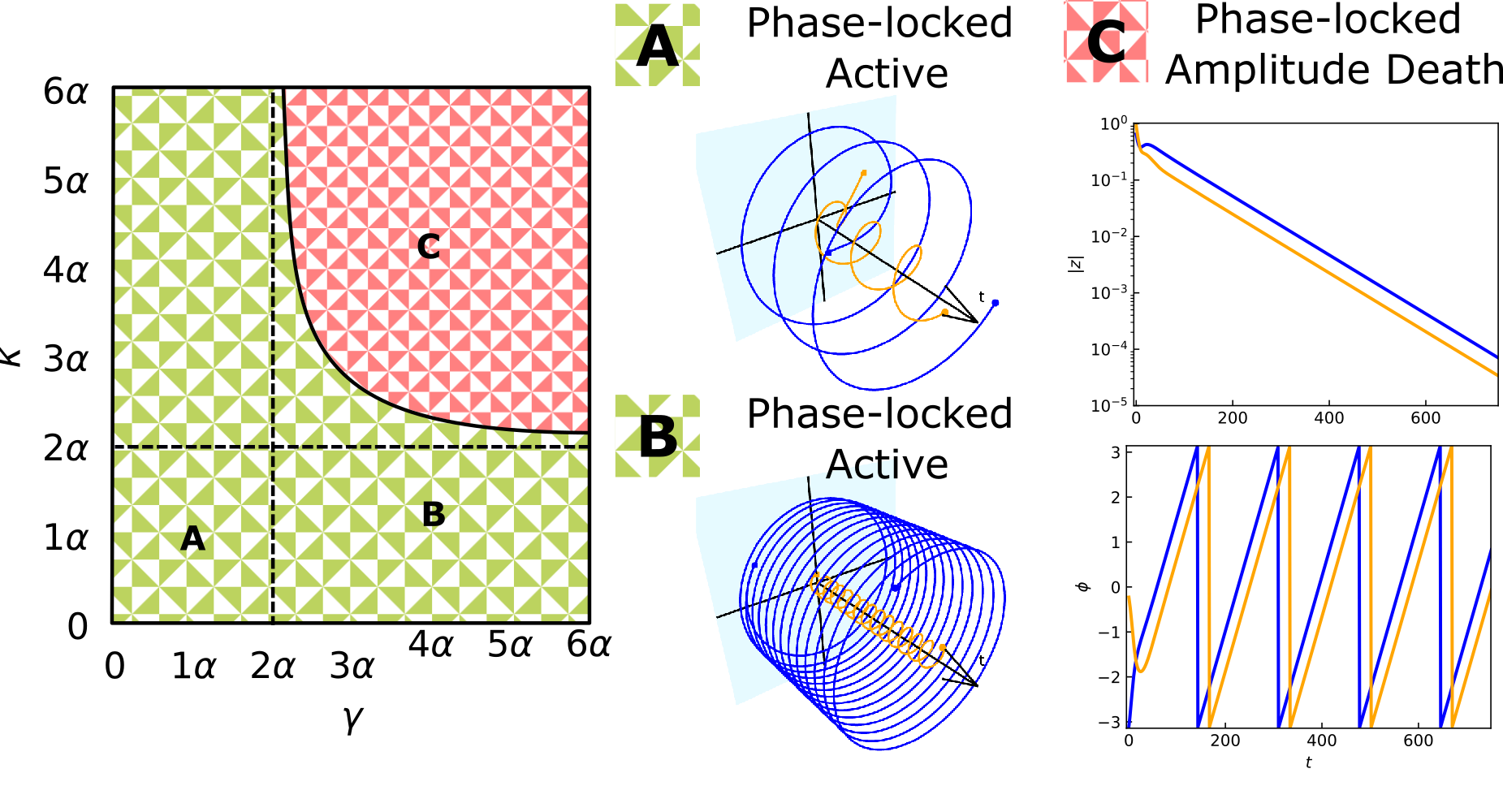}
    \caption{Phase diagram for the Stuart-Landau system with $N=2$ oscillators with nonidentical, on average critical, Hopf parameters $\a_1=-\a_2>0$. The phase behavior remains Phase-Locked (Tessellated), while the Active state (Green) is recovered for $(\k,\g)$ below the curve $f(\a_1,\a_2,\k,\g)=0$, and Amplitude Death (Red) above the curve. The points A,B,C give dynamics for the chosen parameters in the phase diagram. Note the qualitative differences: (A-small $\k$, small $\g$) slow rotations with medium difference in amplitudes, (B-small $\k$, large $\g$) fast rotations, large difference in amplitudes, (C-large $\k$, large $\g$) fast amplitude death.
    }
    \label{fig:Kur2a1=-a2>0}
\end{figure}

From $\a_1=-\a_2>0$, we begin to increase $\a_2$ which immediately gives the next transition at $\a_1>-\a_2\geq 0$. Breaking the symmetry further shrinks the Amplitude Death regime so that $f(\a_1,\a_2,\k,\g)=0$ no longer can be viewed as a function $\k(\g)$ as it becomes multi-valued. However, we can still see in Figure \ref{fig:Kur2a1>-a2>0} the persistence of the horizontal asymptote at $\k=2\a_1$, and thus the preservation of the \textit{leader driven synchronization} regime. The weak natural frequency heterogeneity regime $\g<\g'$ for $\g'>2\a_1$ (computed in Section \ref{ss:I2}) grows, while the limiting behavior in the vertical direction can be computed to be quadratic $\k \sim \g^2$, so that for any $\g>\g'$. There exist two values $\k_1(\g)>\k_2(\g)$ that satisfy the Amplitude Death curve \eqref{intro:curve}. If $\k>\k_1(\g)$, then the Active Phase-Locked state returns in the strong coupling regime.

\begin{figure}[t]
    \centering
    \includegraphics[width=12cm]{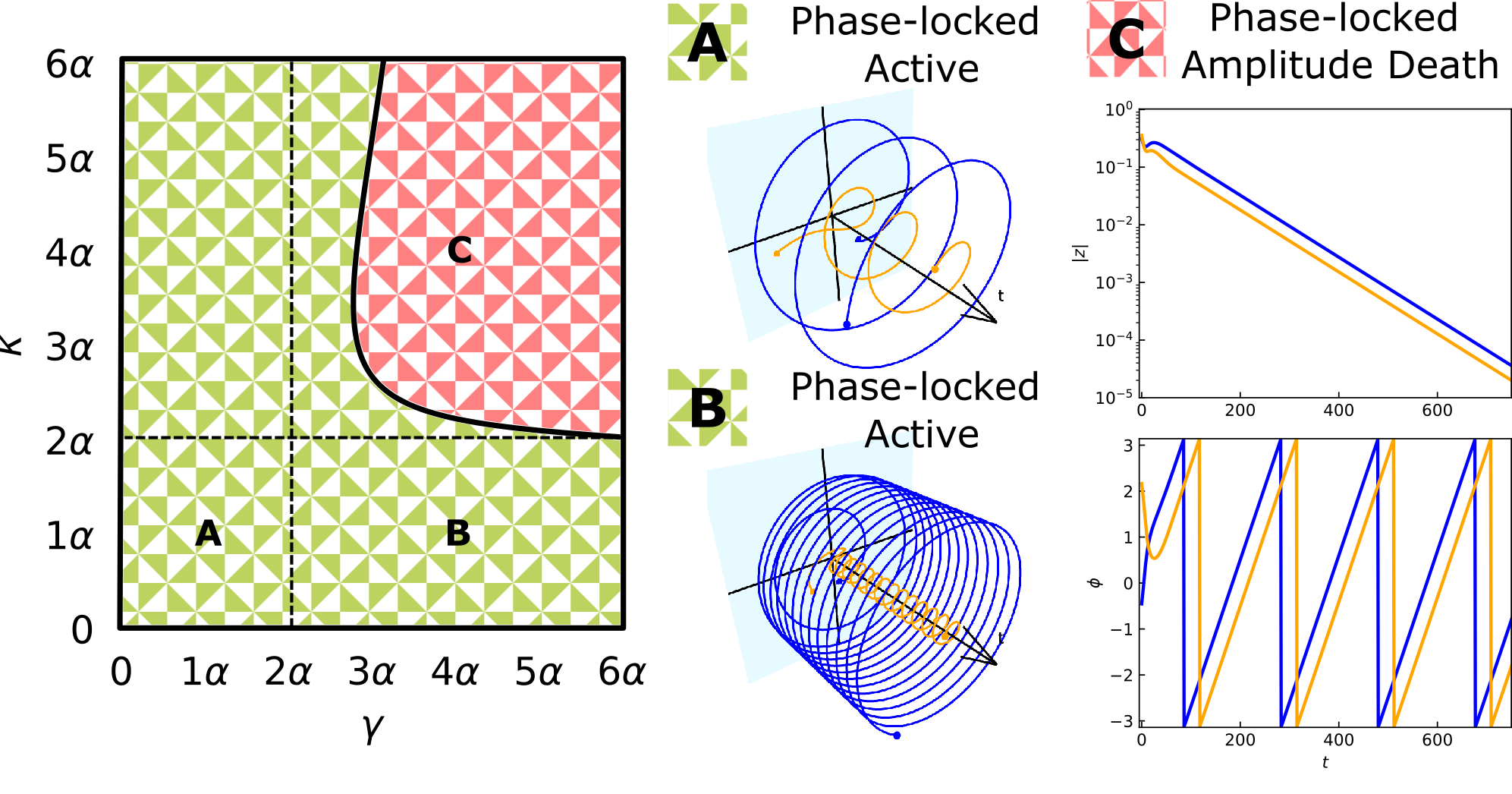}
    \caption{Phase diagram for the Stuart-Landau system with $N=2$ oscillators with nonidentical, supercritical on average, Hopf parameters $\a_1>-\a_2\geq 0$. The phase behavior remains Phase-Locked (Tessellated), while the Amplitude Death regime (Red) above the curve $f(\a_1,\a_2,\k,\g)=0$ shrinks further, compared to the Active state (green). The points A,B,C give dynamics for the chosen parameters in the phase diagram. Note the qualitative differences: (A-small $\k$, small $\g$) slow rotations with medium difference in amplitudes, (B-small $\k$, large $\g$) fast rotations, large difference in amplitudes, (C-large $\k$, large $\g$) fast amplitude death.}
    \label{fig:Kur2a1>-a2>0}
\end{figure}

As $\a_2$ becomes positive, we enter the parameter configuration $\a_1>\a_2>0$, which once more recovers the incoherent state for low coupling strengths $\k$. This curve can be explicitly computed as well for $\g$ as a function of $\k$:
\begin{align}\label{intro:curve2}
    \g^*(\k)=\k\left(\frac{\a_1+\a_2-\k}{\sqrt{(2\a_1-\k)(2\a_2-\k)}}\right), \ \ \k \in (0,2\a_2).
\end{align}
The \textit{leader-driven synchronization} regime is now moved up to intermediate values of  $\k \in [2\a_2,2\a_1]$ where we can again see that a unique Active Phase-Locked state exists for any $\g>0$, wedged in between the two curves \eqref{intro:curve2} and \eqref{intro:curve}. The dotted line seen in Figure \ref{fig:Kur2a1>a2>0} serves to delineate the smooth qualitative transition from \textit{leader-driven synchronization} to standard phase-locking behavior. This qualitative distinction can be seen via analysis of the $|z|$ order parameter.

Of further importance for understanding Figure \ref{fig:Kur2a1>a2>0} is the Northeast region, which undergoes Amplitude Death and Phase-Locking. Indeed, in all configurations when $\a_1\neq\a_2$ we see the correspondence of Amplitude Death always experiencing Phase-Locking, in contrast to Figure \ref{fig:Kur2a>0}. As $\a_2 \nearrow \a_1$ we see two transitions. This can be seen by comparing Figure \ref{fig:Kur2a1>a2>0} and Figure \ref{fig:Kur2a>0}, as well as looking at the limit of $f(\a_1,\a_2,\k,\g)$. First, the \textit{leader-driven synchronization} region shrinks and disappears as at $\a_1=\a_2$ there can be no leader. Second, there is a discontinuous bifurcation where an entire region of Amplitude Death with Incoherent dynamics (Periodic Orbit) emerges. Indeed, the curve $\g^*(\k)$ which determines the phase-transition from Incoherence to Phase-Locking within Figure \ref{fig:Kur2a1>a2>0}, jumps to the straight line $\k=\g$ seen in Figure \ref{fig:Kur2a>0} exactly when $\a_1=\a_2=\a$.

\begin{figure}[t]
    \centering
    \includegraphics[width=12cm]{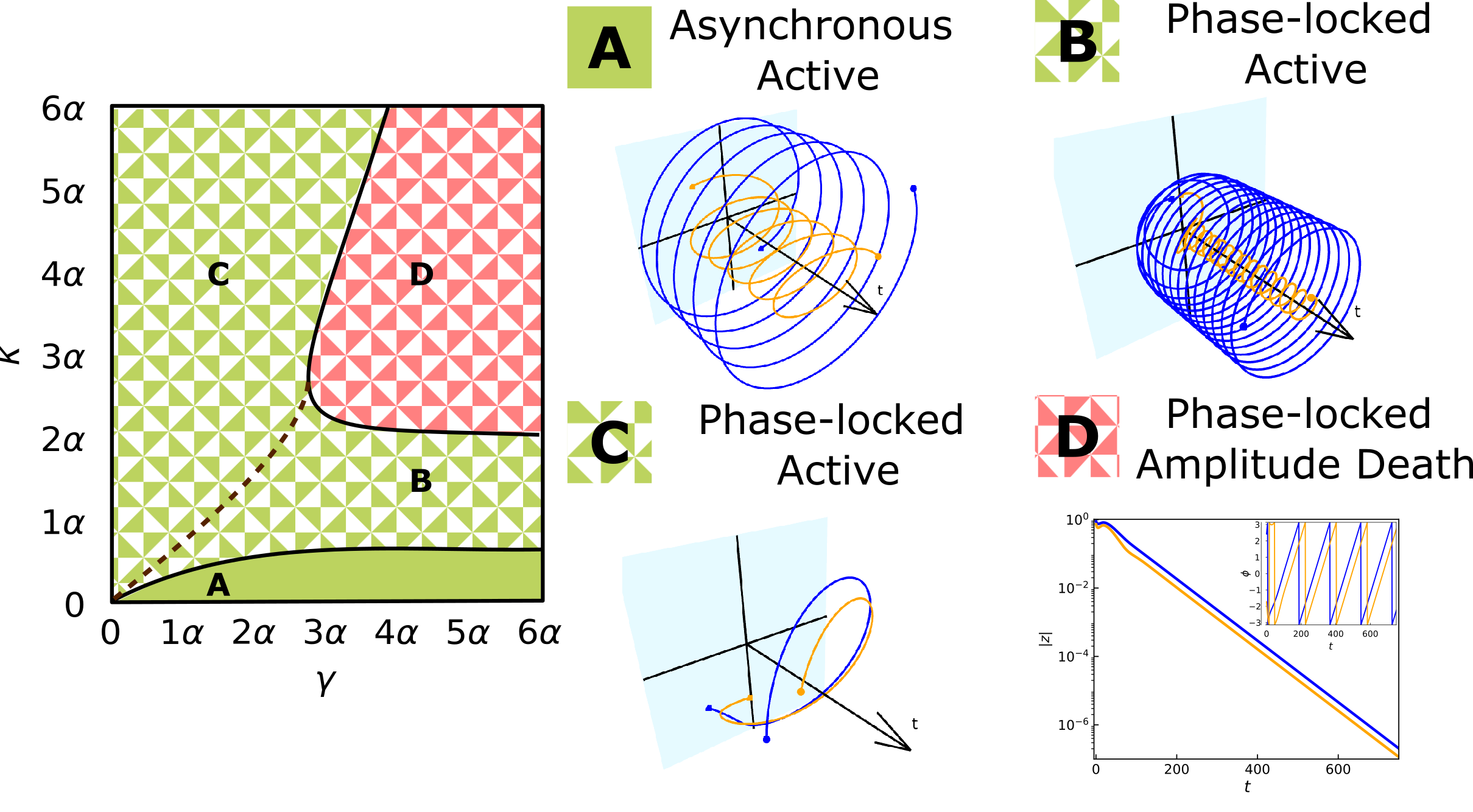}
    \caption{Phase diagram for the Stuart-Landau system with $N=2$ oscillators with nonidentical supercritical Hopf parameters $\a_1>\a_2>0$. The phase behavior is incoherent (Solid) for weak coupling values, below the lower curve and Phase-locked (Tessellated) above. The Amplitude Death regime (Red) can still be found in the Northeast region above the curve $f(\a_1,\a_2,\k,\g)=0$, while the system remains Active (Green) below. Intermediate values of $\k \in [2\a_2,2\a_1]$ beyond the dotted line correspond to the new area of Active, Phase-Locking named \textit{leader-driven synchronization}. The points A,B,C,D give dynamics for the chosen parameters in the phase diagram. Note the qualitative differences: (A-small $\k$) oscillators rotate in opposite directions with medium difference in amplitudes, (B-intermediate $\k$, large $\g$) fast rotations with large difference in amplitudes, (C-large $\k$, small $\g$) slow rotations, small difference in amplitudes, (D-large $\k$, large $\g$) fast amplitude death.
    }
    \label{fig:Kur2a1>a2>0}
\end{figure}

\subsection{Finitely many oscillator case}
Beyond the two oscillator case, we address the general $2\leq N<\infty$ case with heterogeneous amplitude parameters $a>0$, and homogeneous natural frequency parameters $\g=0$. The main result for this case is for a large class of initial data known as \textit{sectorial solutions}, complete frequency synchronization $\Phi_{jk}\to 0$ with all amplitudes Active, but differing in asymptotic value, or all amplitudes converging to Amplitude Death at the same exponential rate is guaranteed. The results are provided in the following theorem.

\begin{theorem}\label{t:N>2synch}
    Let $2\leq N<\infty$, $\a_j \in \R$ and $\w_j\equiv 0$ for each $j=1,...,N$. Let $\{z_j\}_{j=1}^N$ be strictly sectorial solutions to \eqref{SLhom}, then for all $\k>0$, $\max_{j,k} \Phi_{jk}\to 0$ exponentially fast, and the amplitude dynamics have each $r_j \to r_j^{\infty}$, where one of the two cases can occur:
    \begin{itemize}
        \item Amplitude death: $r_j^{\infty}=0$ for all $j=1,...,N$ if all the following conditions hold
        \begin{itemize}
        \item $\sum_{j=1}^N\a_j<0$,
        \item $\k>\a_j$, $\forall j=1,...,N$
        \item $\sum_{j=1}^N \frac{\a_j}{\k-\a_j}<0$
        \end{itemize}
        \item Active: $r_j^{\infty}>0$ for each $j=1,...,N$ if any of the above conditions fails.
    \end{itemize}
    Further, $r_{j,\infty}\leq r_{j+1,\infty}$ for all $j$ and if all $\a_j>0$, then $r_j^{\infty} \in [\min_j \sqrt{\a_j},\max_j \sqrt{\a_j}]$.
\end{theorem}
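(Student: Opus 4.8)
\emph{Strategy.} With $\w_j\equiv 0$ the complex system \eqref{eq:SL} is a genuine gradient flow, $\dot z_j=-2\,\partial_{\bar z_j}\mathcal E$, for the real-analytic, coercive energy
\[
\mathcal E(z)=\sum_{j=1}^N\Big(-\tfrac{\a_j}{2}|z_j|^2+\tfrac14|z_j|^4\Big)+\frac{\k}{4N}\sum_{k,l=1}^N|z_k-z_l|^2 ,
\]
which already forces boundedness of trajectories and, by the Lojasiewicz inequality, convergence to a single equilibrium. To extract the exponential phase rate and the death/active dichotomy, however, it is cleaner to argue in the polar form \eqref{eq:SL-split} and decouple the two effects. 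For phase synchronization, let $D(t)=\max_{j,k}\Phi_{jk}(t)=\phi_M(t)-\phi_m(t)$ be the phase diameter; the strictly sectorial hypothesis gives $D(0)<\pi$. At the minimizing index $m$ every term satisfies $\sin(\phi_l-\phi_m)\ge 0$ since $\phi_l-\phi_m\in[0,D]\subset[0,\pi)$, so keeping only $l=M$ gives $\dot\phi_m\ge\frac{\k}{N}\frac{r_M}{r_m}\sin D$, and symmetrically $\dot\phi_M\le-\frac{\k}{N}\frac{r_m}{r_M}\sin D$. Using the upper Dini derivative and $\tfrac{r_m}{r_M}+\tfrac{r_M}{r_m}\ge 2$,
\[
\tfrac{d^+}{dt}D\le-\tfrac{2\k}{N}\sin D .
\]
Thus $D$ is nonincreasing (the sector is invariant), and concavity of $\sin$ on $[0,D(0)]$ yields $\tfrac{d^+}{dt}D\le-\lambda D$ with $\lambda=\tfrac{2\k}{N}\tfrac{\sin D(0)}{D(0)}>0$, so $\max_{j,k}\Phi_{jk}\to 0$ exponentially, with a rate independent of the amplitude ratios.

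\emph{Reduction and convergence of amplitudes.} Since $\cos(\phi_l-\phi_j)=1+O(D^2)$ with $D(t)\le D(0)e^{-\lambda t}$, the amplitude equations in \eqref{eq:SL-split} are an exponentially small, time-integrable perturbation of the autonomous system $\dot r_j=(\a_j-r_j^2)r_j+\k(\bar r-r_j)$, where $\bar r=\tfrac1N\sum_l r_l$. This limiting flow is itself a gradient flow for $V(r)=\sum_j(-\tfrac{\a_j}2 r_j^2+\tfrac14 r_j^4)+\tfrac{\k}{4N}\sum_{k,l}(r_k-r_l)^2$, and it is \emph{cooperative}, as $\partial_{r_k}\dot r_j=\k/N>0$ for $k\ne j$. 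An a priori bound follows because at an index realizing $\max_j r_j$ the coupling term is $\le 0$ (using $\cos\le 1$ and $r_l\le r_j$), so $\dot r_j\le(\a_j-r_j^2)r_j$ and trajectories remain in a compact subset of the closed positive orthant. Monotonicity together with the analytic gradient structure then gives $r_j\to r_j^\infty$ to a single equilibrium.

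\emph{Dichotomy, ordering and bounds.} The linearization at $r=0$ is $A=\diag(\a_j-\k)+\tfrac{\k}{N}\mathbf 1\mathbf 1^{\top}$, symmetric, with top eigenvalue the unique root of the secular function $g(\lambda)=1+\tfrac{\k}{N}\sum_j(\a_j-\k-\lambda)^{-1}$ above the largest pole. Under $\k>\a_j$ (condition 2) all poles are negative and $g$ increases from $-\infty$ to $1$ on $(\max_j(\a_j-\k),\infty)$, so the top eigenvalue is negative exactly when $g(0)>0$; the identity $\sum_j\frac{\a_j}{\k-\a_j}=-N+\k\sum_j\frac1{\k-\a_j}$ shows $g(0)>0\iff\sum_j\frac{\a_j}{\k-\a_j}<0$ (condition 3). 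Writing equilibria self-consistently as $r_j=\frac{\k\bar r}{\k-\a_j+r_j^2}$, nontrivial states are positive zeros of $h(\bar r)=\tfrac1N\sum_j r_j(\bar r)-\bar r$; condition 3 gives $h'(0)<0$, and conditions 1--2 are used to exclude positive zeros, so $0$ is the only nonnegative equilibrium and globally attracts — amplitude death. If any condition fails, $0$ is unstable or a positive equilibrium appears, and monotonicity forces convergence to an active state. Finally, differentiating the self-consistency relation at fixed $\bar r$ shows $\a\mapsto r(\a)$ is increasing, giving $r_{j,\infty}\le r_{j+1,\infty}$; and when all $\a_j>0$, applying $r_j^2=\a_j+\k(\bar r-r_j)/r_j$ to the largest amplitude (where $\bar r\le r_j$) and the smallest (where $\bar r\ge r_j$) yields $r_j^\infty\in[\min_j\sqrt{\a_j},\max_j\sqrt{\a_j}]$.

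\emph{Main obstacle.} The delicate point is the \emph{global} statement in the death regime: linear stability of $0$ from conditions 2--3 is immediate, but ruling out coexisting positive equilibria — equivalently, showing $h<0$ on all of $(0,\infty)$ and that no sectorial positive-amplitude datum lies on a stable manifold of a saddle — is precisely where all three conditions must interlock. A secondary technical issue is confirming that the exponentially small, non-autonomous phase forcing does not spoil convergence of the amplitude gradient flow to a single equilibrium, which is where the integrability of the forcing and the Lojasiewicz inequality are invoked.
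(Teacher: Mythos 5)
Your proposal is correct in substance and follows the same overall architecture as the paper (exponential phase synchronization first, then the amplitude system as an exponentially perturbed gradient flow, linearization at the origin for the death/active dichotomy, Lojasiewicz for convergence), but two of your key steps are genuinely different. For phase synchronization the paper works with $1-\cos(\Phi_{+-})$ via the equation for $\tilde z_i\cdot\tilde z_j$ and must first prove a separate lemma (Lemma \ref{l:maxhom}) bounding the amplitude ratios $R_{jk}$ above and below, because its Gr\"onwall rate degrades like $1/R$. Your phase-diameter estimate $\tfrac{d^+}{dt}D\le -\tfrac{\k}{N}\left(\tfrac{r_M}{r_m}+\tfrac{r_m}{r_M}\right)\sin D\le -\tfrac{2\k}{N}\sin D$ exploits $x+1/x\ge 2$ and so bypasses the ratio-bound lemma entirely, yielding an explicit rate $\tfrac{2\k}{N}\sin D(0)/D(0)$ independent of the amplitudes; this is cleaner and, on this point, slightly stronger than the paper's argument. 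For the dichotomy, your secular-function analysis of $\diag(\a_j-\k)+\tfrac{\k}{N}\mathbf 1\mathbf 1^{\top}$ is equivalent to the paper's determinant and principal-minor computation $\det=\prod_j g_j\bigl(1-\tfrac{\k}{N}\sum_j g_j^{-1}\bigr)$; for existence and uniqueness of the active equilibrium the paper uses Brouwer topological degree plus a homotopy in $\boldsymbol{\a}$, whereas you use the scalar self-consistency function $h(\bar r)$ --- your route is a legitimate alternative but is only sketched (the monotonicity and sign analysis of $h$ away from $0$ is asserted, not verified), so the degree-theoretic argument is the more complete of the two. Finally, the point you flag as the main obstacle --- ruling out coexisting positive equilibria under the three death conditions so that local stability of $0$ upgrades to the global claim for all sectorial data --- is indeed left open in your write-up, but the paper's proof does not close it any more explicitly either (it is delegated to the gradient-flow and Lojasiewicz machinery cited from \cite{LRS,RT}), so you have correctly identified the delicate step rather than introduced a new gap.
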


The above theorem proves that in the Active case, all $z_j$ converge to the same ray emanating from the origin at an exponential rate. Meaning the dynamics are essentially one dimensional plus an exponentially decaying term. Considering the system reduced to the one-dimensional setting gives rise to a nonlinear model of opinion dynamics that has been treated in \cite{LRS,RT}. Within said works the parameter and initial data configurations were such that $\a_j>0$ and $x_j(0)>0$. However, the current study of Stuart-Landau oscillators for which it is natural to consider Hopf-parameter $\a_j<0$ has inspired us to further investigate the one-dimensional reduction in this case as well. Allowing for both $\a_j\leq 0$ and initial data $x_j(0)\in \R$ gives rise to previously not seen asymptotic states including stable disagreement, and recovery of a consensus state despite the heterogeneous nonlinear stubbornness effects. Let us rewrite the equations for the Real-line setting and give a few definitions in order to state the main result for the reduced system.
\begin{align}\label{eq:SLopinion}
    \ddt x_j=(\a_j-x_j^2)x_j+\frac{\k}{N}\sum_{l=1}^N(x_l-x_j), \ \ \k>0, \  \a_j,x_{j,0}\in \R, \ \ j=1,...,N.
\end{align}

\begin{definition}[Disagreement, Compromise, and Consensus]
    The asymptotic states $x_j^{\infty} \in \R$ of \eqref{eq:SLopinion} can be characterized as
    \begin{itemize}
        \item Disagreement: If there exists $j,k$ such that $x_j^{\infty}<0$ and $x_k^{\infty}>0$.
        \item Compromise: If for all $j=1,..,N$, $x_j^{\infty}>0$ $(x_j^{\infty}<0)$ and there exists $k,l$ such that $x_k^{\infty}\neq x_l^{\infty}$.
        \item Consensus: If for all $j=1,...,N$, $x_j^{\infty}\equiv c$.
            \begin{itemize}
                \item Balanced Consensus: if $c=0$
            \end{itemize}
    \end{itemize}
\end{definition}

The following theorem highlights the new results for the Stuart-Landau model on the Real line.
\begin{theorem}\label{t:opinion1}
    Let $2\leq N<\infty$, $\a_j \in \R$ and further suppose $\w_j\equiv 0$. If $z_j(0)=x_{j,0}\in \R$, then the dynamics preserve this fact.
    
    If $a=0$, so that $\a_j\equiv \a$ then the following can occur:
    \begin{itemize}
        \item Weak coupling: There exists a $\k^*>0$ such that for $0<\k<\k^*$ there exists both stable Disagreement and stable Active Consensus states.
        \item Strong coupling: There exists a $\k^{**}\geq\k^*>0$ such that for $\k>\k^{**}$, only one of the following holds:
        \begin{itemize}
            \item Active Consensus: $x_j^{\infty}\equiv \sqrt{\a}$ or $x_j^{\infty}\equiv-\sqrt{\a}$ if $\a>0$.
            \item Balanced Consensus: $x_j^{\infty}\equiv 0$ if $\a\leq 0$.
        \end{itemize}
    \end{itemize}

If $a>0$, so that there exist $j,k$ where $\a_j\neq \a_k$, then the following can occur:
\begin{itemize}
        \item Weak coupling: There exists a $\k^*>0$ such that for $0<\k<\k^*$ there exists both stable Disagreement and stable Compromise states.
        \item Strong coupling: There exists a $\k^{**}\geq\k^*>0$ such that for $\k>\k^{**}$, only one of the following holds:
        \begin{itemize}
            \item Active Compromise: $x_1^{\infty}\geq ...\geq x_N^{\infty}>0$ or $x_1^{\infty}\leq ...\leq x_N^{\infty}<0$ if $\sum_{j=1}^N \a_j>0$.
            \item Balanced Consensus: $x_j^{\infty}\equiv 0$ if $\sum_{j=1}^N \a_j<0$.
        \end{itemize}
    \end{itemize}

In every one of the above cases, the dynamics are governed by a gradient flow and hence converge to a fixed point.
\end{theorem}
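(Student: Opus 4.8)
The plan is to exploit the variational (gradient-flow) structure of \eqref{eq:SLopinion} and then analyze its critical points perturbatively in the two limits $\k\to 0^+$ and $\k\to\infty$. Real invariance is immediate: with $\w_j\equiv 0$, if every $z_j$ is real then $|z_j|^2=x_j^2$ and the right-hand side of \eqref{eq:SL} stays real, so $\R^N$ is invariant and the reduced flow is exactly \eqref{eq:SLopinion}. Next I would exhibit the energy
\[
E(x)=\sum_{j=1}^N\Big(\tfrac14 x_j^4-\tfrac{\a_j}{2}x_j^2\Big)+\frac{\k}{4N}\sum_{k,l=1}^N(x_k-x_l)^2,
\]
and verify that \eqref{eq:SLopinion} reads $\dot x_j=-\partial_{x_j}E$, so the dynamics form a gradient flow. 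Since $E$ is a coercive polynomial (the quartic dominates), sublevel sets are compact, trajectories bounded, and the \L{}ojasiewicz gradient inequality for the real-analytic $E$ forces convergence to a single critical point; this yields the final sentence of the theorem and reduces everything to classifying the critical points of $E$ and their stability, with asymptotically stable fixed points $=$ strict local minima.

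A fixed point satisfies $x_j^3-(\a_j-\k)x_j=\k\bar x$ for all $j$, where $\bar x=\frac1N\sum_l x_l$; thus each coordinate is a root of the \emph{same}-RHS cubic, subject to the self-consistency $\bar x=\frac1N\sum_j x_j$. The balanced consensus $x\equiv 0$ is always a critical point, with Hessian $H=\diag(\k-\a_j)-\frac{\k}{N}\mathbf 1\mathbf 1^{\top}$. A rank-one (Sherman--Morrison) computation gives $H\succ 0$ iff $\k>\a_j$ for all $j$ and $\sum_{j}\frac{\a_j}{\k-\a_j}<0$, which in the homogeneous case collapses to $\a<0$. This is precisely the amplitude-death criterion of \thm{t:N>2synch}, which I would quote to identify stable balanced consensus with amplitude death. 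Since $\frac{\a_j}{\k-\a_j}\to\frac{\a_j}{\k}$ as $\k\to\infty$, the sign of this sum is eventually that of $\sum_j\a_j$, which is exactly what will distinguish balanced consensus from active states at strong coupling.

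For the weak-coupling assertions I would use structural stability. At $\k=0$ the system decouples into $N$ scalar gradient flows $\dot x_j=(\a_j-x_j^2)x_j$ whose asymptotically stable equilibria are $x_j=\pm\sqrt{\a_j}$ when $\a_j>0$ and $x_j=0$ when $\a_j<0$; each product equilibrium is a nondegenerate local minimum of $E|_{\k=0}$, hence hyperbolic. By the implicit function theorem applied to $\nabla E=0$ with nondegenerate Hessian, every such equilibrium persists as a nearby strict local minimizer for all sufficiently small $\k$. Selecting sign patterns then yields, for $0<\k<\k^*$, a stable disagreement state (mixed signs, available once two indices carry a positive inherent amplitude) together with a stable compromise (all-positive, unequal because the $\sqrt{\a_j}$ differ) or, in the homogeneous case, a stable active consensus $x_j\equiv\sqrt{\a}$ (an exact equilibrium for every $\k$). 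The threshold $\k^*$ is the supremum of couplings for which these branches remain nondegenerate.

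The strong-coupling dichotomy is the crux. First I would prove an a priori collapse estimate: subtracting the fixed-point relations gives $(x_j-x_k)(x_j^2+x_jx_k+x_k^2+\k)=\a_jx_j-\a_kx_k$, and since coercivity bounds all equilibria uniformly in $\k$, this forces $|x_j-x_k|=O(1/\k)$, so every critical point lies within $O(1/\k)$ of the consensus manifold $\{x_j\equiv c\}$. Restricting $E$ to (a neighborhood of) this manifold yields, to leading order in $1/\k$, the effective potential $\frac{N}{4}c^4-\frac{c^2}{2}\sum_j\a_j$, whose minimizers are $c=0$ if $\sum_j\a_j\le 0$ and $c=\pm\sqrt{\bar\a}$ if $\sum_j\a_j>0$. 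This selects balanced consensus versus active states by the sign of $\sum_j\a_j$, consistent with the Hessian criterion above. For the active case, the ordering $x_1^\infty\ge\cdots\ge x_N^\infty$ (labeling $\a_1\ge\cdots\ge\a_N$) follows from monotonicity of the cubic $g_j(t)=t^3+(\k-\a_j)t$, strictly increasing when $\k>\a_j$: all coordinates solve $g_j(x_j)=\k\bar x$ with the common right-hand side, and $g_j$ is decreasing in $\a_j$ at fixed positive argument, so larger $\a_j$ forces larger $x_j$. I expect the main obstacle to be making this classification \emph{global}, i.e. proving that for $\k>\k^{**}$ no spurious stable equilibria survive off the identified branches, so that only the asserted state type occurs: the collapse estimate confines equilibria to an $O(1/\k)$ tube, but one must still rule out additional local minima of the reduced energy inside the tube and show that the disagreement/compromise branches tracked from $\k=0$ have lost stability by $\k^{**}$. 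This loss-of-hyperbolicity (fold/pitchfork) analysis of $\nabla E=0$, together with quantifying $\k^{**}$ and verifying $\k^{**}\ge\k^*$, is the delicate bifurcation-theoretic step, whereas the two perturbative endpoints and the gradient-flow convergence are comparatively routine.
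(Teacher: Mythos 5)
Your overall architecture --- real invariance, the gradient-flow energy with Lojasiewicz convergence, persistence of the hyperbolic uncoupled equilibria for small $\kappa$, and the Hessian criterion $\kappa>\alpha_j$ for all $j$ together with $\sum_j\alpha_j/(\kappa-\alpha_j)<0$ for stability of the origin --- matches the paper's (Section~\ref{S:opinion} together with Lemmas~\ref{msad} and~\ref{l:actfullsynch}). Where you diverge is the strong-coupling classification: you propose a collapse estimate $|x_j-x_k|=O(1/\kappa)$ plus an effective potential on the consensus manifold, and you correctly flag as the remaining gap that this does not by itself exclude spurious stable equilibria inside the $O(1/\kappa)$ tube (in particular mixed-sign equilibria with all coordinates of size $O(1/\kappa)$), nor does it show that the disagreement branches tracked from $\kappa=0$ have actually disappeared rather than merely drifted toward consensus.

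The paper closes exactly this gap with an observation you are one line away from. Writing the equilibrium condition as $x_j\left(\kappa-\alpha_j+x_j^2\right)=\kappa\bar x$ for every $j$, once $\kappa>\max_j\alpha_j$ the factor $\kappa-\alpha_j+x_j^2$ is strictly positive, so $\mathrm{sgn}(x_j)=\mathrm{sgn}(\bar x)$ for all $j$; and if $\bar x=0$ then $x\equiv 0$. Hence for $\kappa>\max_j|\alpha_j|$ there are \emph{no} mixed-sign equilibria whatsoever (Lemma~\ref{l:k*}), which is stronger than a loss of stability and removes the need for any fold or pitchfork tracking. Uniqueness and stability of the remaining positive (resp.\ negative) equilibrium is then not obtained from your effective potential but imported from the degree-theoretic argument of Lemma~\ref{l:actfullsynch}: every zero of the fixed-point map in the positive orthant has Jacobian with positive determinant, and the Brouwer degree equals $1$ by homotopy to the homogeneous case, so there is exactly one such zero. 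You already invoke the monotonicity of $g_j(t)=t^3+(\kappa-\alpha_j)t$ for the ordering $x_1^\infty\geq\cdots\geq x_N^\infty$; the same monotonicity together with $g_j(0)=0$ gives the sign rigidity above, so your argument is completed by this substitution rather than by the bifurcation analysis you anticipated. (A further small point: the existence of a stable disagreement state at weak coupling requires at least two indices with $\alpha_j>0$, as you note in passing and as the paper makes explicit in Corollary~\ref{c:opinion3}.)
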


As a model of opinion dynamics, these results represent intuitive outcomes for cooperatively interacting stubborn agents. For weak coupling strengths, the stubborn nonlinear forcing dominates and if the initial data is close to disagreement fixed points, then this is preserved, while for initial data close to compromise/consensus states again this will be conserved instead. Meanwhile, if the coupling is strong enough, then the ability to disagree disappears via a phase transition and the system can only converge to Compromise or Consensus states depending on the parameters $\a_j$.

The remainder of the paper is outlined as follows. Section \ref{S:twin} will be dedicated to the $N=2$ homogeneous case ($a=0$) and the proof of Theorem \ref{t:N=2,a=0}. Section \ref{S:ahet} will provide existence, uniqueness, and stability of all states depicted in Figures \ref{fig:Kur2a2<a1<0}-\ref{fig:Kur2a1>a2>0} for the $N=2$ heterogeneous cases ($a>0$). 
Section \ref{S:hom} will treat the general $N\geq2$ case with heterogeneity in amplitudes ($a\geq 0$) and homogeneity in natural frequencies ($\g=0$), proving Theorem \ref{t:N>2synch}. Section \ref{S:opinion} considers the one-dimensional reduction to the real line for the $N\geq2$ system studied in Section \ref{S:hom} which gives rise to a model of opinion dynamics, proving Theorem \ref{t:opinion1}.

\section{The twin oscillators case}\label{S:twin}
As a first investigation of coupled Stuart-Landau oscillators we begin with the simplest regime of $N=2$ oscillators with $\a_1=\a_2=\a$ so that the two oscillators have identical inherent amplitude parameters. Further, for the rotational invariance and symmetry we pick $\o_1=-\o_2=\o\geq 0$. This setting will prove to be the most similar to the Kuramoto model for $N=2$ oscillators, however differences still arise due to the amplitude dependence of the Stuart-Landau oscillators. The main work of this section will be the proof of Theorem \ref{t:N=2,a=0}, which we restate here.

\begin{theorem}\label{2t:N=2,a=0}
Let $N=2$, $\a_1=\a_2=\a$, with $\w_1=-\w_2=\w\geq 0$. Then for any parameter configuration $\a \in \R,\k>0,\g\geq 0$ the following represents an invariant manifold:
\begin{align}
   \cM= \{z_1(t),z_2(t) \in \C: R(t)=\frac{r_1}{r_2}=\frac{|z_1|}{|z_2|}=1\}.
\end{align}
Furthermore, for any initial data $\{z_j(0)\}_{j=1}^2\in \cM$, solutions to \eqref{eq:N=2,a=0,1}-\eqref{eq:N=2,a=0,2}  converge to one of the 4 types of asymptotic states seen in Figures \ref{fig:Kur2a<0}-\ref{fig:Kur2a>0} depending on the parameter configuration of $(\a,\k,\g)$. For $\a>0$, the system remains Active for all $\k,\g<2\a$ while if $\k>2\a$ there is an Active/Amplitude Death curve $\k^*(\g)$ for which the active state is recovered above this curve. It is given by
\begin{align}\label{eq:AAcurve1}
    \k^*(\g)=\frac{4\a^2+\g^2}{4\a},  \ \g \in (2\a,\infty).
\end{align}
    If the system parameters lie above the curve $\k^*(\g)$, then the asymptotic state is Active-Phase-Locking. If the parameters are found in between $\k^*(\g)$ and above the line $\k=\g$, then the asymptotic state is Amplitude Death-Phase-Locking. If $2\a<\k<\g$, then the asymptotic state is Amplitude Death-Incoherence. The remaining parameter configuration gives Active-Incoherence. For $\a\leq 0$, the phase transition from periodic to phase-locked always occurs at $\k=\g$, while Amplitude Death always occurs at an exponential rate (algebraic at $\a=0)$.

    The manifold $\cM$ is stable and thus so are the asymptotic states derived in Figures \ref{fig:Kur2a<0}-\ref{fig:Kur2a>0}. Furthermore, for $\k>2\a$, convergence to asymptotic states for a.e. set of initial data is guaranteed.
    
\end{theorem}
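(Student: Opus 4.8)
The plan is to pass to amplitude-phase coordinates, exploit the discrete symmetry of the twin system to establish invariance of $\cM$, and then reduce the on-manifold dynamics to a decoupled pair consisting of an Adler phase equation and a logistic amplitude equation, the latter of which I linearize by the substitution $u=1/r^2$. First I would record the conjugation symmetry: if $(z_1,z_2)$ solves \eqref{eq:N=2,a=0,1}--\eqref{eq:N=2,a=0,2}, then so does $(\bar z_2,\bar z_1)$, since complex conjugation flips the sign of $\i\w$ and interchanges the two equations. Equivalently, writing $D=r_1^2-r_2^2$ and differentiating using \eqref{eq:SL-split} gives the closed scalar identity
\begin{equation}
\ddt D = \bigl(2\a-\k-2(r_1^2+r_2^2)\bigr)D ,
\end{equation}
which shows at once that $\{D=0\}=\cM$ is invariant. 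On $\cM$ one has $r_1=r_2=:r$ and $R+R^{-1}=2$, so the phase difference $\Phi=\phi_1-\phi_2$ obeys the autonomous Adler equation $\ddt\Phi=\g-\k\sin\Phi$ with $\g=2\w$, while the common amplitude solves $\ddt r=(\a_{\mathrm{eff}}(t)-r^2)r$ with $\a_{\mathrm{eff}}(t)=\a+\tfrac{\k}{2}(\cos\Phi(t)-1)$.

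Next I would analyze the Adler equation on its own: for $\k>\g$ there is a stable equilibrium $\Phi^*=\arcsin(\g/\k)$ and an unstable one, giving exponential phase-locking from all but one initial phase; at $\k=\g$ these coalesce into a degenerate fixed point with the characteristic algebraic ($\sim 1/t$) approach; and for $\k<\g$ the right-hand side never vanishes, so $\Phi$ winds monotonically and is periodic on the circle, i.e. incoherence. To settle active versus amplitude death I would substitute $u=1/r^2$, turning the amplitude equation into the \emph{linear} non-autonomous equation
\begin{equation}
\ddt u = -2\a_{\mathrm{eff}}(t)\,u+2 .
\end{equation}
In the locked regime $\a_{\mathrm{eff}}(t)\to\a_{\mathrm{eff}}^*:=\a+\tfrac{\k}{2}(\cos\Phi^*-1)$, so $u$ converges to $1/\a_{\mathrm{eff}}^*$ when this limit is positive (active, $r^2\to\a_{\mathrm{eff}}^*$) and diverges when it is negative (death); the threshold $\a_{\mathrm{eff}}^*=0$ together with $\cos\Phi^*=\sqrt{1-\g^2/\k^2}$ yields, after squaring, exactly the curve $\k^*(\g)=(4\a^2+\g^2)/(4\a)$ of \eqref{eq:AAcurve1}. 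In the incoherent regime $\a_{\mathrm{eff}}(t)$ is periodic, and using $\int_0^{2\pi}\frac{\cos\Phi}{\g-\k\sin\Phi}\,d\Phi=0$ the integral over one period reduces to $\int_0^T\a_{\mathrm{eff}}\,dt=T(\a-\tfrac{\k}{2})$; a one-dimensional Floquet/Poincaré-map argument for the linear $u$-equation then produces a positive periodic amplitude (active) precisely when $\k<2\a$ and exponential decay to zero when $\k>2\a$, with the algebraic rate appearing on the marginal line $\k=2\a$. Combining the phase trichotomy with the amplitude dichotomy assembles the four labeled regions of Figures~\ref{fig:Kur2a<0}--\ref{fig:Kur2a>0}.

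Finally, both the stability of $\cM$ and the a.e. convergence statement follow from the same $D$-equation: for $\k>2\a$ the bracket $2\a-\k-2(r_1^2+r_2^2)$ is strictly negative, so $D\to0$ exponentially and every orbit is attracted to $\cM$; because the transverse decay is exponential, an asymptotically-autonomous (Markus-type) reduction transfers the on-manifold classification to the full system for a.e. datum, the exceptional set being the stable manifold of the unstable phase equilibrium. The case $\a\le0$ is immediate, since $\a_{\mathrm{eff}}\le\a\le0$ forces amplitude death (exponential for $\a<0$, and algebraic via $\ddt u=2$ when $\a=0$) while the phase transition still sits at $\k=\g$. I expect the main obstacle to be the incoherent amplitude regime: turning the formal period-average into a rigorous statement requires the Floquet analysis of $\ddt u=-2\a_{\mathrm{eff}}(t)u+2$, careful treatment of the degenerate boundaries $\k=\g$ and $\k=2\a$ where convergence becomes algebraic, and then patching the exponential off-manifold convergence to the on-manifold asymptotics.
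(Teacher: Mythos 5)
Your on-manifold analysis follows the paper's route almost exactly: the same closed equation for $D=l=r_1^2-r_2^2$ gives invariance, the reduction to the Adler equation $\ddt\Phi=\g-\k\sin\Phi$ gives the phase trichotomy at $\k=\g$, and the active/death classification comes from the sign of the effective Hopf parameter $\a+\tfrac{\k}{2}(\cos\Phi-1)$ in both the locked and winding regimes. Your substitution $u=1/r^2$ and the explicit identity $\int_0^{2\pi}\frac{\cos\Phi}{\g-\k\sin\Phi}\,d\Phi=0$ are actually a cleaner and more rigorous way to handle the periodic regime than the paper's symmetry-based assertion that $\langle\cos\Phi\rangle=0$; the Floquet argument for the linear $u$-equation is a genuine improvement there.

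However, there are two gaps in the off-manifold part. First, in the regime $\k>2\a$ you conclude from $D\to0$ exponentially that ``every orbit is attracted to $\cM$,'' but in the amplitude-death regions $D=r_1^2-r_2^2\to0$ does not imply $R=r_1/r_2\to1$, since both amplitudes vanish. The paper addresses exactly this by passing to the Riccati equation for $Z=z_1/z_2$,
\begin{align}
\ddt Z=\frac{\k}{2}\Bigl(1+\frac{2}{\k}(\i\g-l)Z-Z^2\Bigr),
\end{align}
whose explicit solution (after the exponentially decaying perturbation from $l\to0$ is absorbed by Duhamel) shows $Z\to Y_\infty$ for a.e.\ datum; this is what simultaneously delivers $R\to1$ and the off-manifold phase-locking/incoherence classification. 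Your ``Markus-type asymptotically-autonomous reduction'' is too vague to substitute for this step. Second, the theorem asserts stability of $\cM$ for \emph{all} parameter configurations, including $\k<2\a$, where the bracket $2\a-\k-2(r_1^2+r_2^2)$ in the $D$-equation need not be negative and your argument gives nothing. The paper spends the bulk of its stability proof on this case: for $\k\ge\g$ it constructs the invariant phase region $\Phi\in[0,\pi/2]$ on which $\cos\Phi\ge0$ forces monotone decay of $R$ toward $1$, and for $\k<\g$ it runs a delicate argument showing $\langle\cos\Phi\rangle\ge0$ off the manifold (via the location of the inflection points of $\Phi$) combined with a Gr\"onwall contradiction. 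Your proposal would need to supply an analogue of both of these to cover the full claim.
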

\begin{proof}
Let us begin with the invariance of the manifold $\cM$.
As we only have two oscillators we rewrite the equations here.
\begin{align}
    \ddt z_1&=(\a+\i\w-|z_1|^2)z_1+\frac{\k}{2}(z_2-z_1), \label{2eq:N=2,a=0,1}\\
    \ddt z_2&=(\a-\i\w-|z_2|^2)z_2+\frac{\k}{2}(z_1-z_2).\label{2eq:N=2,a=0,2}
\end{align}
The equations for the amplitudes and phases are similarly given by
\begin{align}
    \ddt r_1&=(\a-r_1^2)r_1+\frac{\k}{2}(\cos(\phi_1-\phi_2)r_2-r_1),\\
    \ddt r_2&=(\a-r_2^2)r_2+\frac{\k}{2}(\cos(\phi_1-\phi_2)r_1-r_2),\\
    \ddt \phi_1&=\w+\frac{\k}{2}\frac{r_2}{r_1}\sin(\phi_2-\phi_1),\\
    \ddt \phi_2&=-\w+\frac{\k}{2}\frac{r_1}{r_2}\sin(\phi_1-\phi_2).
\end{align}
As we are only looking at two oscillators we track the average phase, $\Psi=\frac{1}{2}(\phi_1+\phi_2)$, the difference in the phases, $\Phi=\phi_1-\phi_2$,
and the ratio of the amplitudes, $R:=\frac{r_1}{r_2}$. Further with $\g:=\w_1-\w_2=2\w$ we get the following equations
\begin{align}\label{eq:2rat}
    \ddt R=R(r_2^2-r_1^2)+\frac{\k}{2}\cos(\Phi)\left(1-R^2\right).
\end{align}
\begin{align}\label{eq:Phi}
    \ddt \Phi=\g-\frac{\k}{2}\sin(\Phi)\left(\frac{1}{R}+R\right)
\end{align}
\begin{align}\label{eq:Psi}
    \ddt \Psi=\frac{\k}{4}\sin(\Phi)\left(R-\frac{1}{R}\right)
\end{align}
 Then it is clear that $R=1$, yields an invariant manifold for the system.

 Let us prove that within the manifold $\cM$, convergence to each of the asymptotic outcomes dictated by Figures \ref{fig:Kur2a<0}-\ref{fig:Kur2a>0} are guaranteed.

We begin with the phase behavior.

 Let $\{z_j(0)\}_{j=1}^2\in \cM$. Then as $R=1$, we have $\ddt \Psi=0$ by \eqref{eq:Psi}. Therefore without loss of generality let us assume $\Psi(0)=0$ so that $\phi_1=-\phi_2$ and $\Phi=2\phi_1$. Then \eqref{eq:Phi} becomes
 \begin{align}\label{eq:PT}
     \ddt \Phi=\gamma-\kappa\sin(\Phi).
 \end{align}
 The phase transition is now clear. For $\k<\gamma$ we have $\ddt \Phi>0$ and it is impossible for a fixed point of \eqref{eq:PT} to exist and the dynamics are periodic in $\Phi$. While for $\k>\gamma$ the value $\Phi=\arcsin(\frac{\g}{\k})$ is an asymptotically stable fixed point. This provides the line $\k=\g$ in both Figures \ref{fig:Kur2a<0} and \ref{fig:Kur2a>0}. At the line $\k=\g$ we see that $\Phi \nearrow \frac{\pi}{2}$ at an algebraic rate.\\

 To see the amplitude behavior first let $\a\leq 0$ to complete Figure \ref{fig:Kur2a<0}. As $R=1$ we need only observe one amplitude $r_1$,
 \begin{align}
     \ddt r_1&=(\a-r_1^2)r_1+\frac{\k}{2}(\cos(\Phi)r_2-r_1),\\
     &=(\a-r_1^2+\frac{\k}{2}(\cos(\Phi)-1))r_1,\\
     &\leq (C-r_1^2)r_1
 \end{align}
 where $C$ is an upper bound on $\a+\frac{\k}{2}(\cos(\Phi)-1)$. If $\a<0$, then $C<0$ and if $\a=0$, then $C=0$ if and only if $\cos(\Phi)=1$, which can only occur in the special case of $\g=0$ as well. Thus if $\a<0$ or if $\a=0$ and $\g>0$, then $r_j \to 0$ exponentially fast by Gr\"onwall's inequality. In the case $\a=0, \g=0$ the inequality becomes
 \begin{align}
     \ddt r_1\leq -r_1^3,
 \end{align}
 in which case amplitude death still occurs, but at an algebraic rate $r_j \sim t^{-1/2}$.

 Now for $\a>0$, in order to complete Figure \ref{fig:Kur2a>0}, we separate into several cases.
 
 \textit{Case 1:} Let $0\leq \g < 2\a$. This corresponds to the vertical rectangular portion of Figure \ref{fig:Kur2a>0} where we will show the oscillators always remain active.
 
  \textit{Case 1a:} Let $\k<\g<2\a$.
We get the same equation for the amplitude of an oscillator
\begin{align}\label{eq:rHopf}
    \ddt r_1&=(\a-r_1^2+\frac{\k}{2}(\cos(\Phi)-1))r_1,
\end{align}
we see that since $\k<\g$, the term $\cos(\Phi)$ will behave periodically according to $\dot{\Phi}=\g-\k\sin(\Phi)$, with average value of zero, $\langle \cos(\Phi)\rangle=0$,  due to the symmetry of cosine and the symmetry of the dynamics  around the points $\Phi=\frac{\pi}{2}$ and $\Phi=\frac{3\pi}{2}$. 

In order to extract the active state within this regime, we observe that equation \eqref{eq:rHopf} corresponds to a Stuart-Landau oscillator with time periodic Hopf-parameter given by $\a + \frac{\k}{2}(\cos(\Phi)-1)$. Thus the system will remain active if on average this parameter remains positive. Thus the time-average Hopf-parameter is
\begin{align}
    \a+\frac{\k}{2}\left(\langle \cos(\Phi)\rangle-1\right)=\a-\frac{\k}{2}>0,
\end{align}
since $\k<\g<2\a$.

\textit{Case 1b:} Let $\g=\k<2\a$.

Then the amplitude equation becomes
  \begin{align}
     \ddt r_1&=(\a-r_1^2-\frac{\k}{2})r_1 + A(t),
\end{align}
 where $A(t) \to 0$ at an algebraic rate. As $\k<2\a$, we again see that the system is bounded below by the equation for a Hopf-bifurcation with $\a-\frac{\k}{2}>0$ and hence the oscillator must remain active. In particular the limiting amplitude value is given by $r_j^2 \to \a-\frac{\k}{2}$ at an algebraic rate.

 \textit{Case 1c:} Let $\g<\k$.

 Then due to the phase-locking at an exponential rate, the amplitude equation becomes
  \begin{align}
     \ddt r_1&=(\a-r_1^2+\frac{\k}{2}\left(\sqrt{1-\left(\frac{\g}{\k}\right)^2}-1\right))r_1 + E(t),\\
     &=(\a-r_1^2+\frac{1}{2}\left(\sqrt{\k^2-\g^2}-\k\right))r_1 + E(t),\label{eq:Hopf1}
\end{align}
where $E(t)$ is an exponentially decaying quantity. In order to see that the system always remains active within this regime, we differentiate the Hopf-bifurcation value with respect to the coupling strength.
\begin{align}\label{eq:diffk}
    \frac{d}{d\k}\left(\a+\frac{1}{2}\left(\sqrt{\k^2-\g^2}-\k\right)\right)=\frac{1}{2}\left(\frac{\k-\sqrt{\k^2-\g^2}}{\sqrt{\k^2-\g^2}}\right).
\end{align}
From \eqref{eq:diffk} we can see that the Hopf-bifurcation parameter of \eqref{eq:Hopf1} is monotonically increasing as $\k$ increases away from the value $\k=\g$. This implies that within the regime $\g<2\a$, if $\k\geq \g$, then the system converges to the active phase-locked state seen in Figure \ref{fig:Kur2a>0}.\\

\textit{Case 2:} Let $\g>2\a$. This corresponds to the other vertical rectangular half of the phase diagram Figure \ref{fig:Kur2a>0}.

\textit{Case 2a:} Let $\k<2\a<\g$.

The same argument as \textit{Case 1a} provides the Active state.

\textit{Case 2b:} Let $2\a<\k<\g$.

For this case, as $\k>2\a$ we cannot make the same conclusion immediately as we did in Case 1a and 2a. However, the same initial computations hold so that we can view the time-average Hopf-parameter as
\begin{align}
    \a+\frac{\k}{2}\left(\langle \cos(\Phi)\rangle -1\right)=\a-\frac{\k}{2}<0.
\end{align}
Hence we have Amplitude Death in this regime.

\textit{Case 2c:} Let $\g\leq \k< \frac{4\a^2+\g^2}{4\a}$.

As $\k \geq \g$ we have phase-locking so that the amplitude equation becomes
\begin{align}
     \ddt r_1&=(\a-r_1^2+\frac{\k}{2}\left(\sqrt{1-\left(\frac{\g}{\k}\right)^2}-1\right))r_1 + E(t),
\end{align}
where $E(t) \to 0$ exponentially fast for $\k>\g$ and at an algebraic rate for $\k=\g$. From this we can see that the Hopf-bifurcation parameter occurs at the value 
\begin{align}\label{eq:Hopf2}
    \a^*(\k)=\frac{\k}{2}\left(1-\sqrt{1-\left(\frac{\g}{\k}\right)^2}\right).
\end{align}
Solving \eqref{eq:Hopf2} for $\k$ yields the condition $\k^*=\frac{4\a^2+\g^2}{4\a}$. Then equation \eqref{eq:diffk} again provides the monotonicity in $\k$ implying that for $\g\leq \k< \frac{4\a^2+\g^2}{4\a}$ the system decays to amplitude death.

\textit{Case 2d:} Let $\g< \frac{4\a^2+\g^2}{4\a}<\k$.
The same argument for \textit{Case 2c} except we are on the other side of the Hopf-bifurcation and we conclude the convergence to the active state.\\

\textit{Stability of $\cM$}.\\

Again, we separate into two cases, this time differentiating between the relationship of $\k$ and $\a$.

\textit{Case 1:} $\k>2\a$.

Let us define a new variable $l=r_1^2-r_2^2$. This value appears in the equation for the ratio $R$, \eqref{eq:2rat}. Indeed, $R=1$ is equivalent to $l=0$ within the Active regime so these quantities are closely related. Computing the derivative of $l$ yields
\begin{align}\label{eq:l1}
    \ddt l=l(2\a-\k-2(r_1^2+r_2^2)).
\end{align}
Indeed, for $\k>2\a$, we can immediately conclude by Gr\"onwall's inequality that $l \to 0$. Now this is not quite enough to guarantee that $R\to 1$ as there exist Amplitude Death regimes when $\k>2\a$ where it is feasible that $r_1,r_2\to 0$, but $R\to R^*\neq 1$. However, letting $Z=\frac{z_1}{z_2}$ we can extract the following Riccati equation
    \begin{align}\label{eq:Z}
        \ddt Z&=\frac{\k}{2}\left(1+\frac{2}{\k}(\i\g-l)Z-Z^2\right),\\
        &=\frac{\k}{2}\left(1+\frac{2}{\k}\i\g Z-Z^2\right)+E(t)
    \end{align}
    where $E(t)$ is an exponentially decaying term from the fact that $l \to 0$ exponentially fast. Indeed, the equation without the exponential perturbation,
    \begin{align}\label{eq:Y}
        \ddt Y=\frac{\k}{2}\left(1+\frac{2}{\k}\i\g Y-Y^2\right),
    \end{align}
    can be explicitly solved, when $\k<\g$, the solution is periodic with $R\to1$ while for $\k>\g$ we have 
    \begin{align}
    Y(t)=\frac{Y_{\infty}+\overline{Y}_{\infty}\frac{Y(0)-\overline{Y}_{\infty}}{Y(0)+\overline{Y}_{\infty}}e^{-\sqrt{\k^2-\g^2}t}}{1-\frac{Y(0)-Y_{\infty}}{Y(0)+Y_{\infty}}e^{-\sqrt{\k^2-\g^2}t}},
    \end{align}
    where $Y_{\infty}=\sqrt{1-\frac{\g^2}{\k^2}}+\i\frac{\g}{\k}$. By Duhamel's Principle, solutions to the original equation \eqref{eq:Z} will also exponentially converge to $Y_{\infty}.$ If $\k=\g$, then the solution to \eqref{eq:Y} is given by
    \begin{align}
        Y(t)=\i+\left(\frac{\k}{2}t+\frac{1}{Y(0)-\i}\right)^{-1}
    \end{align}
    and solutions will tend to $Y_{\infty}=i$ at an algebraic rate.

    In fact, this proves that for all $\k>2\a$, then for almost all initial conditions solutions converge to the asymptotic states described in Figures \ref{fig:Kur2a<0}-\ref{fig:Kur2a>0}. As $\k>2\a$ holds trivially for $\a\leq 0$, Figure \ref{fig:Kur2a<0} is completely proven.

    In order to complete Figure \ref{fig:Kur2a>0} we move to the next case.

    \textit{Case 2:} $\k<2\a$.

    We return to the equation for the $l$ variable.
    \begin{align}
        \ddt l=l(2\a-\k-2(r_1^2+r_2^2))
    \end{align}

\textit{Case 2a:} $\k\geq \g$.

As $\k\geq \g$ we know that the phase equation \eqref{eq:Phi} always has stationary states. Further we can see an invariant region for the phase difference $\Phi$ given by $\Phi \in [0,\frac{\pi}{2}]$. Indeed,
\begin{align}
    \ddt \Phi|_{\Phi=0}=\g>0, \ \ \ \ddt \Phi|_{\Phi=\frac{\pi}{2}}=\g-\frac{\k}{2}\left(\frac{1}{R}+R\right)\leq \g-\k\leq0.
\end{align}
Therefore, if $\Phi \in[0,\frac{\pi}{2}]$, then $\cos(\Phi)\geq0$ and the equation for $R$ gives monotonic convergence towards $R=1$,
\begin{align}
    \ddt R=-lR+\frac{\k}{2}\cos(\Phi)(1-R^2)<0,
\end{align}
within the invariant region, providing stability of the manifold $\cM$. Further note that the invariant phase region is absorbing for almost all initial data as well.

    \textit{Case 2b:} $\k<\g$.

    As $\k<\g$ we know that for $R=1$ no fixed point can exist. Therefore we cannot have an invariant region for the phase variable $\Phi$. On the manifold $\cM$ we saw that the time average $\langle \cos(\Phi)\rangle|_\cM=0$. Off of $\cM$, (but near the manifold) let us see that $\langle \cos(\Phi)\rangle \geq 0$.

    Without loss of generality suppose that $R>1$.  Then the equation,
    \begin{align}
        \ddt R=-lR+\frac{\k}{2}\cos(\Phi)(1-R^2)<0,
    \end{align}
    holds at least when $\cos(\Phi)\geq 0$, i.e. $R$ is strictly decreasing for $\Phi(t) \in [-\frac{\pi}{2},\frac{\pi}{2}]$. On the other hand, it is possible that when $\Phi \sim \pi$ so that $\cos(\Phi)\sim -1$, that $\ddt R>0$ provides growth.

    Returning to the equation for $\Phi$ we see that
    \begin{align}\label{eq:Phiav>0}
        \ddt \Phi=\g-\frac{\k}{2}\left(\frac{1}{R}+R\right)\sin(\Phi).
    \end{align}
    As $\k<\g$, let us suppose that initially $R_0$ is close enough to $1$ so that there are no critical points of \eqref{eq:Phiav>0}. Indeed, if there were, then up to some finite time $T>0$, there would be an invariant region such that $\cos(\Phi(t))\geq 0$ on $[0,T]$ providing monotonic decay of $R$ until there can be no critical points of \eqref{eq:Phiav>0}.

    Now as $R_0$ is close enough to $1$, we have that $\ddt \Phi>0$. Examining the second derivative will tell us where $\Phi(t) \in [0,2\pi]$ spends more time on average, and hence allow us to determine information on $\langle \cos(\Phi)\rangle$.
\begin{align}
    \ddt\dot{\Phi}=-\frac{\k}{2}\sin(\Phi)\ddt\left(R+\frac{1}{R}\right)-\frac{\k}{2}\left(R+\frac{1}{R}\right)\cos(\Phi)\ddt \Phi.
\end{align}

As $\ddt\Phi>0$, we can see the following:
\begin{align}
    \ddt\dot{\Phi}(0)<0, \ \ \ddt\dot{\Phi}(\frac{\pi}{2})>0, \ \ \ddt\dot{\Phi}(\pi)>0, \ \ \ddt\dot{\Phi}(\frac{3\pi}{2})<0.
\end{align}
Hence due to the dependence on $R$ we can see that in each pass from $\Phi=0$ to $\Phi=2\pi$, there are two inflection points that can be found in the first $(\Phi^* \in (0,\frac{\pi}{2})$) and third quadrants $\Phi^{**} \in (\pi,\frac{3\pi}{2})$). This breaks the symmetry that we see on the manifold $\cM$ where each inflection point is found at $\Phi=\frac{\pi}{2}$ and $\Phi=\frac{3\pi}{2}$, and will provide us with the condition $\langle \cos(\Phi)\rangle\geq 0$. Indeed, looking at Figure \ref{fig:Manifold_stability}, one can see the broken symmetry showing that $R$ is decreasing on more of the dynamics than it increases, as well as the acceleration of the phase behavior being shifted off center. This implies that off the manifold $\cM$, the phase variable $\Phi$ spends more time close to zero, $(\Phi \sim 0)$, than it does close to $\pi$, $(\Phi \sim \pi)$, allowing us to conclude that the time average of the cosine of the phase variable is nonnegative, $\langle \cos(\Phi)\rangle\geq 0$.

\begin{figure}
    \centering
    \includegraphics[width=0.5\linewidth]{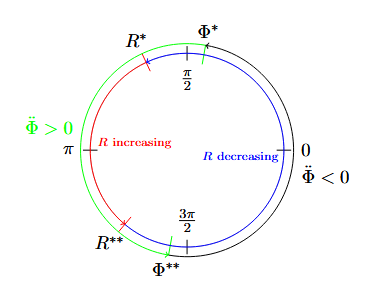}
    \caption{Schematic showing the concavity and inflection points $\Phi^*, \Phi^{**}$ of the phase behavior $\Phi$ and the growth and decay of the ratio $R$ over one $[0,2\pi)$ revolution of the dynamics off of the manifold $\cM$.}
    \label{fig:Manifold_stability}
\end{figure}

In order to yield the stability of the manifold, we must show that $R \to 1$ or equivalently, as we know this regime remains active, $l \to 0$. Let $\e>0$ be arbitrary and assume $l(t)\geq \e$ for all $t\geq 0$. Returning to the equation on $R$ we get the inequality
\begin{align}
    \ddt (R-1)&\leq -\e (R-1)-\e+\frac{\k}{2}\cos(\Phi)(1-R^2),\\
&=-\e +(R-1)(-\e-\frac{\k}{2}\cos(\Phi)(1+R))    
\end{align}
    By Gr\"onwall's inequality we get
    \begin{align}
        R(t)-1 \leq (R(0)-1)\exp\left(\int_0^t-\e-\frac{\k}{2}\cos(\Phi)(1+R(s)) \ds\right).
    \end{align}
Given that $\langle \cos(\Phi)\rangle\geq 0$, we know that $\lim_{t\to\infty} \int_0^t-\e-\frac{\k}{2}\cos(\Phi)(1+R(s)) \ds=-\infty$, implying $R\to 1$. However, this also implies $l \to 0$, contradicting the assumption that $l \geq \e$. Hence $l \to 0$ and the manifold $\cR$ is stable in all cases.
    
  \end{proof}

The model described in Theorem \ref{2t:N=2,a=0} is the most similar to the Kuramoto model, where the same condition for the existence of stable phase-locked states is seen. However, even in this simplest version, the inclusion of the amplitudes into the model lead to different outcomes. The difference here is that how close to the Hopf-bifurcation the oscillators are relative to the coupling strength and natural frequencies determines whether or not amplitude death occurs. Indeed, we see regimes where amplitude death and active states can occur in both phase-locked and incoherent states.

\section{Introducing $\alpha$-Heterogeneity}\label{S:ahet}

Within this section we will systematically go through the passage from Figure \ref{fig:Kur2a<0} to Figure \ref{fig:Kur2a>0} via the introduction of $\alpha$-heterogeneity. 

The relevant equations are then
\begin{align}
    \ddt z_1&=(\a_1+\i\w_1-|z_1|^2)z_1+\frac{\k}{2}(z_2-z_1), \label{eq:SL2het1}\\
    \ddt z_2&=(\a_2+\i\w_2-|z_2|^2)z_2+\frac{\k}{2}(z_1-z_2). \label{eq:SL2het2}
\end{align}

Before we investigate each particular configuration, let us also write down each of the equations that will prove useful in the analysis. The phase difference equation does not see the Hopf-parameter and is therefore unchanged, however, the equation for the ratio of amplitudes is effected, and in turn affects the phase-difference dynamics as the two equations cannot be decoupled
\begin{align}
    \ddt \Phi&=\g-\k\left(R+\frac{1}{R}\right)\sin(\Phi), \label{eq:SLphihet} \\
    \ddt R&=(a-l)R+\frac{\k}{2}\cos(\Phi)(1-R^2). \label{eq:SLRhet}
\end{align}
Recalling that $a=\a_1-\a_2>0$ and $l=r_1^2-r_2^2$. We further have the equation for $l$ and the full complex ratio $Z=\frac{z_1}{z_2}$
\begin{align}
    \ddt l&= 2(\a_1r_1^2-\a_2r_2^2)-l(\k+2(r_1^2+r_2^2)), \label{eq:SLlhet} \\
    \ddt Z&=(a+\i\g-l)Z+\frac{\k}{2}(1-Z^2). \label{eq:SLZhet}
\end{align}
Each of these equations will prove useful in the following subsections, however of particular importance is \eqref{eq:SLZhet} which is a complex-valued Riccati equation with a time dependent coefficient, $l$, in the linear term. Indeed, any situation in which we can first provide a convergence of the variable $l$, the Riccati theory will grant us the rest of the necessary results.\\

Each of the following subsection will cover the asymptotic states of the respective phase diagrams for the different configurations of $\a_1$ and $\a_2$.

\subsection{Subcritical Oscillators} We begin with the first bifurcation where we move $\a_1$ up so that $\a_2<\a_1\leq 0$. Their is only one stable asymptotic state for this system given by Figure \ref{fig:Kur2a2<a1<0}. The result can be given by the following theorem
\begin{theorem}\label{t:SLa2<a1<0}
    Let $\a_2<\a_1\leq 0$, then solutions to the system \eqref{eq:SL2het1}-\eqref{eq:SL2het2} converge to Amplitude Death at an exponential rate, however through this process the phase difference variable $\Phi$ converges to a fixed value providing Phase-Locking, for a.e. initial data.
\end{theorem}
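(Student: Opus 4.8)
The plan is to decouple the two assertions: first establish exponential amplitude death \emph{unconditionally} (for every initial datum, with no phase information needed), and only afterwards feed this into the complex Riccati equation \eqref{eq:SLZhet} to obtain phase-locking for a.e.\ initial data. The two organizing objects are the energy $E=r_1^2+r_2^2$ and the complex ratio $Z=z_1/z_2$, and the key structural fact I would exploit is the hint that convergence of $l=r_1^2-r_2^2$ unlocks the Riccati theory for \eqref{eq:SLZhet}.

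For the amplitude death I would compute $\tfrac12\dot{E}=r_1\dot{r}_1+r_2\dot{r}_2$ directly from the polar equations, obtaining
$$\tfrac{1}{2}\dot{E}=-\left(r_1^4+r_2^4\right)+\langle M(\Phi)\br,\br\rangle,\qquad \br=(r_1,r_2),$$
where
$$M(\Phi)=\begin{pmatrix} \a_1-\tfrac{\k}{2} & \tfrac{\k}{2}\cos\Phi \\ \tfrac{\k}{2}\cos\Phi & \a_2-\tfrac{\k}{2}\end{pmatrix}.$$
The quartic term is $\leq 0$, so it suffices to show $M(\Phi)$ is negative definite uniformly in $\Phi$. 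Its trace $\a_1+\a_2-\k$ is a negative constant, while its determinant equals $\a_1\a_2-\tfrac{\k}{2}(\a_1+\a_2)+\tfrac{\k^2}{4}\sin^2\Phi$, which, since $\a_1,\a_2\leq 0$ with $\a_2<0$, is bounded below by $-\tfrac{\k}{2}\a_2>0$. Hence the top eigenvalue of $M(\Phi)$ is bounded above by a strictly negative constant $-\mu$ depending only on $(\a_1,\a_2,\k)$, and Gr\"onwall's inequality gives $E(t)\leq E(0)e^{-2\mu t}$. This proves exponential amplitude death for all initial data; note the boundary case $\a_1=0$ is covered automatically, since it is precisely the heterogeneity $\a_2<0$ that keeps the determinant positive.

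For phase-locking I would then use that $r_1,r_2\to 0$ exponentially forces $l\to 0$ exponentially, so \eqref{eq:SLZhet} is an exponentially small perturbation of the autonomous Riccati equation $\dot{Y}=(a+\i\g)Y+\tfrac{\k}{2}(1-Y^2)$. Its equilibria $Y_\pm$ solve $Y^2-\tfrac{2}{\k}(a+\i\g)Y-1=0$, so $Y_+Y_-=-1$ and linearization yields multipliers $\mp\sqrt{(a+\i\g)^2+\k^2}$; thus exactly one equilibrium $Y_\infty$ is attracting. Because the complex Riccati flow is a M\"obius action, every trajectory except those on the measure-zero stable manifold of the repelling equilibrium converges to $Y_\infty$. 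I would transfer this to \eqref{eq:SLZhet} by Duhamel's principle, exactly as in the stability argument following \eqref{eq:Z}--\eqref{eq:Y}. Since $Y_\infty\neq 0$, this gives $\Phi=\arg Z\to\arg Y_\infty$ (phase-locking) and simultaneously $R=|Z|\to|Y_\infty|$, so the amplitude ratio settles even though both amplitudes vanish.

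The main obstacle is this last step: making rigorous the passage from the asymptotically autonomous equation \eqref{eq:SLZhet} to its limit while correctly identifying the exceptional null set. One must (i) verify $Z$ stays well defined, i.e.\ $z_2(t)\neq 0$ for finite $t$, which holds for a.e.\ initial datum and can be arranged by alternating between $Z$ and $1/Z$, and (ii) control the exponentially decaying coefficient $l(t)$ in the Duhamel representation so that it neither obstructs convergence to $Y_\infty$ nor spuriously enlarges the basin of the repeller. The explicit solvability of the constant-coefficient Riccati equation, already used for \eqref{eq:Y}, makes this tractable; moreover the genuine real part $a>0$ (absent in the $a=0$ analysis) pushes the equilibria off the imaginary axis, which if anything sharpens the attracting/repelling dichotomy.
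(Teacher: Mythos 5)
Your proposal is correct, and the amplitude--death half takes a genuinely different route from the paper. The paper first shows $\{R\geq 1\}$ is invariant (since $\tfrac{d}{dt}R|_{R=1}=a>0$), restricts to data with $R(0)\geq 1+\e$, and then bounds $\dot{r}_1\leq\bigl(\a_1-\tfrac{\k\e}{2(1+\e)}-r_1^2\bigr)r_1$, so that the coupling term supplies the strictly negative effective Hopf parameter needed to cover the borderline $\a_1=0$; the decay of both amplitudes is thus obtained conditionally on starting in the invariant region. You instead work with the single Lyapunov functional $E=r_1^2+r_2^2$ and observe that the quadratic form $M(\Phi)$ has constant negative trace $\a_1+\a_2-\k$ and determinant $\a_1\a_2-\tfrac{\k}{2}(\a_1+\a_2)+\tfrac{\k^2}{4}\sin^2\Phi\geq -\tfrac{\k}{2}\a_2>0$ uniformly in $\Phi$ (each computation checks out against \eqref{eq:SL-split}), so its top eigenvalue is uniformly $\leq-\mu<0$ and Gr\"onwall gives $E(t)\leq E(0)e^{-2\mu t}$ for \emph{every} initial datum. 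This is cleaner and slightly stronger: it removes the restriction on $R(0)$, makes the $\a_1=0$ case automatic via the heterogeneity $\a_2<0$, and leaves the ``a.e.'' qualifier to be carried entirely by the Riccati step (the stable manifold of the repelling equilibrium $Y_\infty^-$ and the finite-time non-vanishing of $z_2$). The phase-locking half --- exponential decay of $l$, the perturbed Riccati equation \eqref{eq:SLZhet}, the multipliers $\mp\sqrt{(a+\i\g)^2+\k^2}$, and Duhamel to pass from the autonomous limit to the perturbed flow --- coincides with the paper's argument; your remark that $a>0$ keeps $\Re\sqrt{(a+\i\g)^2+\k^2}>0$ is exactly the point the paper makes via the polar-form computation.
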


\begin{proof}
    Let us begin with equation \eqref{eq:SLRhet}. As $\a_1>\a_2$, indeed we expect that in general the dynamics would produce $r_1(t)\geq r_2(t)$. Checking at $r_1=r_2$ so that $R=1$ we get
    \begin{align}
        \ddt R|_{R=1}=a>0
    \end{align}
    Hence there is an invariant region such that $r_1\geq r_2$ where $R\geq 1$. Further, by continuity this can be extended to a region such that $R\geq 1+\e$ for some $\e>0$ which depends on the values $a$ and $\g$.

    Supposing initially that $R(0)\geq 1+\e$, this is preserved and we examine the amplitude equation for $z_1$
    \begin{align}
        \ddt r_1&=(\a_1-r_1^2)r_1+\frac{\k}{2}(\cos(\Phi)r_2-r_1),\\
        &=(\a_1+\frac{\k}{2}(\cos(\Phi)\frac{1}{R}-1)-r_1^2)r_1,\\
        &\leq (\a_1-\frac{\k\e}{2(1+\e)}-r_1^2)r_1.
    \end{align}
Thus for $\a_1\leq 0$ we have $\a_1-\frac{\k\e}{2(1+\e)}<0$ and both $r_1,r_2\to 0$ exponentially fast. In particular this also proves $l\to 0$ at an exponential rate.

Returning to equation \eqref{eq:SLZhet} we have
\begin{align}
    \ddt Z=(a+\i\g)Z+\frac{\k}{2}(1-Z^2)+E(t)
\end{align}
    where $E(t)\to 0$ exponentially fast due to the convergence of $l\to 0$ at an exponential rate. Thus the function $Z(t)$ satisfies an exponential perturbation of a complex-valued Riccati equation with constant coefficients. Solving the stationary solution for the unperturbed equation amounts to solving
    \begin{align}
        0=(a+\i\g)Y+\frac{\k}{2}(1-Y^2)
    \end{align}
    which yields two solutions via the quadratic formula
    \begin{align}
        Y_{\infty}^+&=\frac{a+\i\g+\sqrt{(a+\i\g)^2+\k^2}}{\k},\\
        Y_{\infty}^-&=\frac{a+\i\g-\sqrt{(a+\i\g)^2+\k^2}}{\k}.
    \end{align}

    \textit{Linear Stability:}

    Let $F(Y)=(a+\i\g)Y+\frac{\k}{2}(1-Y^2)$ and compute the linearization around the fixed point
    \begin{align}
        DF(Y)&=(a+\i\g)-\k Y,\\
        DF(Y_{\infty}^+)&=-\sqrt{(a+\i\g)^2+\k^2},\\
        DF(Y_{\infty}^-)&=\sqrt{(a+\i\g)^2+\k^2}.
    \end{align}
    Now to see that $\Re{\sqrt{(a+\i\g)^2+\k^2}}>0$ we expand the square to have $(a+\i\g)^2=a^2-\g^2+2a\g\i$. As $a,\g>0$ we have $2a\g>0$, and therefore $(a+\i\g)^2+\k^2$ is found in the upper half plane with positive imaginary part. Writing $\sqrt{(a+\i\g)^2+\k^2}$ in polar form gives
    \begin{align}
        \sqrt{(a+\i\g)^2+\k^2}=\sqrt{r}e^{\i\theta/2}.
    \end{align}
    As $(a+\i\g)^2+\k^2$ is found in the upper half plane, we have $\th \in (0,\pi)$ and hence $\theta/2 \in (0,\frac{\pi}{2})$ and $\Re{\sqrt{(a+\i\g)^2+\k^2}}>0$. Thus $Y_{\infty}^+$ is a stable equilibrium and $Y_{\infty}^-$ is repelling.

    For the unperturbed equation
    \begin{align}
        \ddt Y=(a+\i\g)Y+\frac{\k}{2}(1-Y^2),
    \end{align}
    since there are exactly two fixed points, one attracting and one repelling and the dynamics remain bounded, convergence to $Y_{\infty}^+$ would occur for all initial data not on the unstable manifold of $Y_{\infty}^-$. The same holds for the original equation for $Z$ with the exponentially small perturbation.

    In this way, although the system converges to amplitude death at an exponential rate, we can still see that phase-locking occurs as the quotient variable $Z=Re^{i\Phi} \to Y_{\infty}^+$ exponentially fast. 
\end{proof}

In this way we can see that the variable $R$ remains bounded along the dynamics and converges to the particular value $|Y_{\infty}^+|$ and $\Phi_{\infty}$ can also be extracted from $Y_{\infty}^+$. In fact, this proof yields an interesting fact for the $\a$-heterogeneous system highlighted in the following theorem.

\begin{theorem}[Amplitude Death implies Phase-Locking]\label{t:ADtoPL}
    Let $\a_1>\a_2$, then the solutions to the system \eqref{eq:SL2het1}-\eqref{eq:SL2het2} which undergo Amplitude Death also converge to a Phase-Locked state.
\end{theorem}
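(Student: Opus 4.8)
The plan is to observe that the Riccati argument in the proof of Theorem~\ref{t:SLa2<a1<0} never truly used the sign hypothesis $\a_1\le 0$: that hypothesis served only to \emph{produce} amplitude death, whereas the passage from amplitude death to phase-locking is regime-independent. So I would begin by assuming amplitude death, $r_1(t),r_2(t)\to 0$, which forces both $l=r_1^2-r_2^2\to 0$ and $s:=r_1^2+r_2^2\to 0$. The complex ratio $Z=z_1/z_2=Re^{\i\Phi}$ then solves, by \eqref{eq:SLZhet},
\begin{equation}
    \ddt Z=(a+\i\g)Z+\frac{\k}{2}(1-Z^2)-l(t)Z,
\end{equation}
an asymptotically autonomous perturbation of an autonomous complex Riccati equation, the perturbation $l(t)Z$ being small precisely because $l(t)\to 0$.

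Next I would reuse the fixed-point and linear-stability computation already carried out for Theorem~\ref{t:SLa2<a1<0}: the limiting equation has exactly the two equilibria $Y_\infty^{\pm}=\k^{-1}\bigl(a+\i\g\pm\sqrt{(a+\i\g)^2+\k^2}\bigr)$, and since $a>0$ places $(a+\i\g)^2+\k^2$ in the closed upper half-plane, one has $\Re\sqrt{(a+\i\g)^2+\k^2}>0$, so $Y_\infty^+$ is a hyperbolic sink and $Y_\infty^-$ a hyperbolic source. The crucial structural point, which explains the absence of any \emph{a.e.} qualifier in the statement, is that both equilibria are finite and nonzero (indeed $Y_\infty^-=0$ would force $\k=0$), so \emph{each} corresponds to a genuine phase-locked ratio with well-defined $\arg Y_\infty^{\pm}$.

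I would then close the argument according to the rate of amplitude death. Whenever amplitude death is exponential — as it is in every parameter window of Section~\ref{S:ahet} — one has $l(t)\to 0$ exponentially, and the Duhamel/variation-of-constants estimate used for Theorem~\ref{t:SLa2<a1<0} applies, pinning $Z(t)$ to the sink $Y_\infty^+$ off a measure-zero exceptional set. If only $l(t)\to 0$ is available, I would instead view the Riccati flow on the Riemann sphere $\widehat{\C}$, where orbits are automatically precompact and $\infty$ is a regular point; the limiting flow is then the north--south (sink--source) flow whose only internally chain-transitive invariant sets are $\{Y_\infty^+\}$ and $\{Y_\infty^-\}$, so by the theory of asymptotically autonomous systems the $\o$-limit set of $Z$ is one of these two equilibria. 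In either route $Z(t)$ converges to $Y_\infty^+$ or $Y_\infty^-$, and since both have a well-defined argument, $\Phi(t)=\arg Z(t)$ tends to a constant and the oscillators phase-lock.

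The main obstacle is precisely the quantitative control of the decay that feeds the perturbation term. When $\a_1>0$ is permitted, the naive energy identity for $s=r_1^2+r_2^2$ only yields the upper bound $\tfrac12\dot s\le\a_1 s$ (the coupling and cubic contributions are sign-indefinite once one oscillator is supercritical), so exponential decay of $s$, and hence of $l$, cannot be read off directly and must instead be extracted from the assumed convergence $r_j\to 0$ together with the near-origin linearization of \eqref{eq:SL2het1}--\eqref{eq:SL2het2}. Verifying that this linearization legitimately governs the decay rate — or, in the rate-free route, that no heteroclinic cycle between $Y_\infty^{\pm}$ can arise as an $\o$-limit set — is the only genuinely delicate step; both are settled by the explicit solvability of the limiting Riccati flow, which shows that $Y_\infty^+$ attracts all of $\widehat{\C}$ except the single source point.
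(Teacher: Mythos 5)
Your proposal is correct and follows the same skeleton as the paper's argument: the paper obtains Theorem~\ref{t:ADtoPL} as an immediate byproduct of the proof of Theorem~\ref{t:SLa2<a1<0} — amplitude death forces $l=r_1^2-r_2^2\to 0$, the ratio $Z=z_1/z_2$ then obeys a decaying perturbation of the constant-coefficient Riccati equation \eqref{eq:SLZhet}, and the two hyperbolic equilibria $Y_\infty^{\pm}$ (sink and source) pin down the asymptotic phase difference. Where you genuinely add something is at the closing step. The paper's Duhamel argument is carried out only under exponential decay of $l$, which it extracts from the hypothesis $\a_1\le 0$; for a supercritical leader $\a_1>0$ the assumption ``undergoes amplitude death'' does not hand you a rate (and on the transition curve $f(\a_1,\a_2,\k,\g)=0$ the origin is non-hyperbolic, so the decay may be only algebraic). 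Your rate-free route — compactifying the Riccati flow on $\widehat{\C}$, checking that $\infty$ is a regular point, and invoking the chain-transitivity theory of asymptotically autonomous systems to conclude that the $\o$-limit set of $Z$ is one of the two equilibria — repairs this cleanly and covers the borderline cases that the theorem's wording silently includes. Your observation that convergence to \emph{either} $Y_\infty^+$ or $Y_\infty^-$ yields phase-locking (both are finite and nonzero since $\k>0$) is also the right explanation for why the statement carries no a.e.\ qualifier, in contrast to Theorem~\ref{t:SLa2<a1<0}. Two small imprecisions, neither fatal: the coupling contribution to $\tfrac12\dot s$ is in fact nonpositive (it is bounded above by $-\tfrac{\k}{2}(r_1-r_2)^2$ after using $\cos\Phi\le 1$) rather than sign-indefinite, and ``exponential in every parameter window'' should exclude the bifurcation curve $f=0$ — but your Riemann-sphere argument needs neither claim.
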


\subsection{Interlude 1: Obtaining the Active/Amplitude Death curve}\label{ss:i1}

Now that we have an explicit description of the asymptotic phase-locked state within Amplitude Death, we can extract relations between the system parameters which provide the boundaries of the various phase-transitions seen in Figures \ref{fig:Kur2-a2>a1>0}-\ref{fig:Kur2a1>a2>0}
Let us first find the curve which delineates between the Active State and Amplitude Death. To see how the curve is obtained, we recall the equation for the amplitude of the second oscillator,
\begin{align}
    \ddt r_2=(\a_2-r_2^2)r_2+\frac{\k}{2}(\cos(\Phi)r_1-r_2).
\end{align}
However we can write the above equation in the standard form of a Hopf-bifurcation,
\begin{align}
    \ddt r_2&=(\a_2+\frac{\k}{2}(\cos(\Phi)R-1)-r_2^2)r_2,\\
    &=(\a_2+\frac{\k}{2}(\Re(Z)-1)-r_2^2)r_2,
\end{align}
where we have used the fact that $\cos(\Phi)R=\Re(Z)$.

As we have just shown, Amplitude Death implies phase-locking in $\a$-heterogeneous systems, therefore the boundary of the Amplitude Death region is provided by
\begin{align}
    &\a_2+\frac{\k}{2}(\Re(Y_{\infty})-1)=0,\\
    &\Re(Y_{\infty}^+)=\frac{a+\sqrt{\frac{1}{2}(\sqrt{4a^2\g^2+(a^2-\g^2+\k^2)^2}+a^2-\g^2+\k^2)}}{\k}.
\end{align}
This provides the following relation between $\a_1,\a_2, \k, \g$,
\begin{align}\label{curve1}
  f(\a_1,\a_2,\k,\g):=\a_1+\a_2-\k+\sqrt{\frac{1}{2}(\sqrt{4a^2\g^2+(a^2-\g^2+\k^2)^2}+a^2-\g^2+\k^2)}=0, \ \ \ \k>0,\g\geq0
\end{align}

The equation $f(\a_1,\a_2,\k,\g)=0$ provides the transition from Active to Amplitude deaths in each of Figures \ref{fig:Kur2-a2>a1>0}-\ref{fig:Kur2a1>a2>0} so that if $f(\a_1,\a_2,\k,\g)\geq0$, the only asymptotic state of the system is Amplitude Death, and therefore also phase-locking by Theorem \ref{t:ADtoPL}.\\

Of interesting note is to consider what happens to this curve in each of the transitions between diagrams. Let us begin with Figure \ref{fig:Kur2-a2>a1>0}.
Fixing $\a_1$ and $\a_2$ within this regime we can see that the Active state occurs only for weak coupling values such that $f(\a_1,\a_2,\k,\g)<0$. Further, for $-\a_2>\a_1>0$, we see that $\a_1+\a_2-\k<0$ and hence for any $\g>0$, there exists a $\kappa>0$ satisfying \eqref{curve1}. Therefore we can compute the limiting behavior as $\gamma \to 0$ and as $\gamma \to \infty$ in the above relation.
\begin{lemma}
    Let $-\a_2>\a_1>0$. Then the curve $f(\a_1,\a_2,\k,\g)=0$ which yields the phase transition from Active to Amplitude Death states satisfies the following relations in $\g$.
    \begin{align}
        & f(\a_1,\a_2,\k,0)=0 \implies \k= \frac{2\a_1\a_2}{\a_1+\a_2},\\
        &f(\a_1,\a_2,\k,+\infty)=0  \implies \k=2\a_1.
    \end{align}
\end{lemma}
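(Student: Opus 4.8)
The plan is to evaluate the explicit expression \eqref{curve1} for $f$ directly at $\g = 0$ and in the limit $\g \to +\infty$, in each case collapsing the nested radical to an elementary expression and then solving $f = 0$ for $\k$. No new machinery is needed; both assertions are algebraic consequences of the closed form for the Active/Amplitude Death curve, using the sign constraints imposed by the regime $-\a_2 > \a_1 > 0$.

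For $\g = 0$ I would substitute into \eqref{curve1}: the term $4a^2\g^2$ vanishes and $(a^2 - \g^2 + \k^2)^2 = (a^2 + \k^2)^2$ is a perfect square, so the inner radical collapses to $a^2 + \k^2$ and the outer radical to $\sqrt{a^2 + \k^2}$. Hence $f(\a_1,\a_2,\k,0) = \a_1 + \a_2 - \k + \sqrt{a^2 + \k^2}$. Setting this to zero and isolating gives $\sqrt{a^2 + \k^2} = \k - (\a_1 + \a_2)$; since $-\a_2 > \a_1 > 0$ forces $\a_1 + \a_2 < 0 < \k$, the right-hand side is positive and squaring is reversible. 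After squaring the $\k^2$ terms cancel, and using $a = \a_1 - \a_2$ together with the identity $a^2 - (\a_1 + \a_2)^2 = -4\a_1\a_2$ the equation linearizes to $2\a_1\a_2 = \k(\a_1 + \a_2)$, yielding $\k = \frac{2\a_1\a_2}{\a_1 + \a_2}$.

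For $\g \to +\infty$, I would set $B = a^2 - \g^2 + \k^2$ and $D = \sqrt{4a^2\g^2 + B^2}$, so the quantity under the outer radical is $\tfrac{1}{2}(D + B)$. Since $B \to -\infty$, rather than Taylor-expand I would rationalize: multiplying by the conjugate gives $D + B = \frac{D^2 - B^2}{D - B} = \frac{4a^2\g^2}{D - B}$. As $\g \to \infty$ both $D$ and $-B$ grow like $\g^2$, so $D - B \sim 2\g^2$ and hence $D + B \to 2a^2$. Therefore the outer radical tends to $\sqrt{\tfrac{1}{2}\cdot 2a^2} = a = \a_1 - \a_2$, giving $f(\a_1,\a_2,\k,+\infty) = \a_1 + \a_2 - \k + (\a_1 - \a_2) = 2\a_1 - \k$, whose only root is $\k = 2\a_1$.

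The only delicate point is the $\g \to \infty$ step: the leading $-\g^2$ inside $B$ must cancel exactly against the leading $\g^2$ growth of $D$, so a crude bound is insufficient and the precise cancellation has to be exhibited. The conjugate identity $D + B = 4a^2\g^2/(D - B)$ handles this cleanly, converting an indeterminate difference of two divergent quantities into a convergent ratio and making the limiting value $2a^2$ transparent. I expect this to be the main (and only mild) obstacle; the $\g = 0$ case is a routine radical manipulation, reversible precisely because $\a_1 + \a_2 < 0$ in this regime.
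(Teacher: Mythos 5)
Your proposal is correct and follows essentially the same route as the paper: direct substitution at $\g=0$ (with the squaring step justified by $\a_1+\a_2<0<\k$), and the conjugate/rationalization trick to resolve the indeterminate cancellation as $\g\to\infty$. The only cosmetic difference is that you rationalize $D+B$ directly, which lands immediately on $2\a_1-\k=0$, whereas the paper computes $\lim(D-\g^2)=a^2-\k^2$ and solves the resulting quadratic $(\k-\a_1-\a_2)^2=a^2$, implicitly discarding the extraneous root $\k=2\a_2$; your variant avoids that step.
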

First, plugging $\g=0$ into the above \eqref{curve1} yields
\begin{align}
    &\a_1+\a_2-\k+\sqrt{a^2+\k^2}=0,\\
    &\k=\frac{2\a_1\a_2}{\a_1+\a_2}.
\end{align}
To compute the asymptotic value of $\k$ as $\g \to \infty$ we first rearrange the terms to isolate the part depending on $\g$,
\begin{align}
    &\lim_{\g \to \infty} f(\a_1,\a_2,\k,\g)=0,\\
    &\lim_{\g \to \infty} \a_1+\a_2-\k+\sqrt{\frac{1}{2}(\sqrt{4a^2\g^2+(a^2-\g^2+\k^2)^2}+a^2-\g^2+\k^2)}=0,\\
    &\lim_{\g \to \infty} \sqrt{\frac{1}{2}(\sqrt{4a^2\g^2+(a^2-\g^2+\k^2)^2}+a^2-\g^2+\k^2)}=\k-\a_1-\a_2,\\
    &\lim_{\g \to \infty} \sqrt{4a^2\g^2+(a^2-\g^2+\k^2)^2}-\g^2=2(\k-\a_1-\a_2)^2-(a^2+\k^2)
\end{align}
As the right hand-side does not depend on $\g$ we need only compute the limit of the left hand side and solve for $\k$. Indeed, multiplying the LHS by the conjugate in the numerator and denominator allows one to yield
\begin{align}
    \lim_{\g \to \infty} \sqrt{4a^2\g^2+(a^2-\g^2+\k^2)^2}-\g^2=a^2-\k^2.
\end{align}
Equating to the above and solving for $\k$ gives $\k=2\a_1$.\\

The shape of the curve $f(\a_1,\a_2,\k,\g)$ changes as we increase $\a_1$ until we hit $\a_1=-\a_2>0$. In particular, we see in Figure \ref{fig:Kur2a1=-a2>0} that with the balance of the supercritical oscillator and the subcritical oscillator, for weak heterogeneity in $\g$, the increased synchronization leads to the active state being recovered in large $\k$ regimes. Thus the domain of permissible $\gamma$ values shrinks. Now due to the symmetry we compute the asymptotic values as $\g \to \infty$ and $\k\to \infty$ respectively.

\begin{lemma}
    Let $\a_1=-\a_2>0$. Then the curve $f(\a_1,\a_2,\k,\g)=0$ which yields the phase transition from Active to Amplitude Death states satisfies the following relations in $\g$ and $\k$.
    \begin{align}  
         f(\a_1,-\a_1,\k,\infty)=0 \implies \k=2\a_1,\\
        f(\a_1,-\a_1,\infty,\g)=0  \implies \g= 2\a_1.        
    \end{align}
\end{lemma}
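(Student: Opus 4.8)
The plan is to exploit the special structure of the symmetric configuration $\a_1=-\a_2$. Here $\a_1+\a_2=0$ and $a=\a_1-\a_2=2\a_1$, so the defining relation \eqref{curve1} collapses to the single radical identity
\begin{align}
\k=\sqrt{\tfrac{1}{2}\left(\sqrt{4a^2\g^2+(a^2-\g^2+\k^2)^2}+a^2-\g^2+\k^2\right)},
\end{align}
with $a=2\a_1$ fixed. Rather than isolating the $\g$-dependent radical and using the conjugate trick twice (as in the previous lemma), I would reduce this to a clean algebraic relation by squaring twice, and then read off both asymptotes at once.

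First I would square once. Since $\k>0$ and the right-hand side is nonnegative, squaring is reversible on the solution set, and after clearing the factor $\tfrac12$ and cancelling the common $\k^2$ term one obtains
\begin{align}
\k^2+\g^2-a^2=\sqrt{4a^2\g^2+(a^2-\g^2+\k^2)^2}.
\end{align}
Any genuine solution forces the left-hand side to be nonnegative, so squaring a second time is again valid. Writing $A=\k^2+\g^2-a^2$ and $B=\k^2-\g^2+a^2$, the squared equation reads $A^2=4a^2\g^2+B^2$, i.e. $A^2-B^2=4a^2\g^2$. Factoring $A^2-B^2=(A-B)(A+B)=2(\g^2-a^2)\cdot 2\k^2$ yields the remarkably simple symmetric relation
\begin{align}
\k^2(\g^2-a^2)=a^2\g^2,\qquad\text{equivalently}\qquad \k^2\g^2=a^2(\k^2+\g^2).
\end{align}
The key structural observation is that this relation is invariant under $\k\leftrightarrow\g$; this symmetry (absent when $\a_1+\a_2\neq0$) is precisely why both asymptotes will land at the same value $2\a_1$.

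From here both limits are immediate. Solving for $\k^2$ gives $\k^2=\frac{a^2\g^2}{\g^2-a^2}$, so as $\g\to\infty$ we get $\k^2\to a^2$, hence $\k\to a=2\a_1$, which is the horizontal asymptote $f(\a_1,-\a_1,\k,\infty)=0\implies\k=2\a_1$. Solving instead for $\g^2$ gives $\g^2=\frac{a^2\k^2}{\k^2-a^2}$ (valid for $\k>a$), so as $\k\to\infty$ we get $\g^2\to a^2$, hence $\g\to 2\a_1$, which is the vertical asymptote $f(\a_1,-\a_1,\infty,\g)=0\implies\g=2\a_1$. The one point requiring care — and the main obstacle — is confirming that the double squaring introduces no spurious branch: I would verify that in each of the two asymptotic regimes the quantity $\k^2+\g^2-a^2$ stays strictly positive (it tends to $+\infty$ in both cases), so the algebraic relation faithfully represents the curve $f=0$ there and the computed limits are the genuine asymptotes rather than artifacts of squaring.
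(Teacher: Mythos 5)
Your proposal is correct, and it takes a genuinely different route from the paper. The paper isolates the $\g$-dependent radical and evaluates each asymptote by the conjugate trick, using the limits $\lim_{\g\to\infty}\sqrt{4a^2\g^2+(a^2-\g^2+\k^2)^2}-\g^2=a^2-\k^2$ and its $\k\leftrightarrow\g$ counterpart, then solving the resulting identity $\k^2-a^2=a^2-\k^2$ (respectively $\g^2-a^2=a^2-\g^2$). You instead square twice to obtain the exact closed form of the curve in the symmetric case, $\k^2\g^2=a^2(\k^2+\g^2)$, and read both asymptotes off it at once. Your algebra checks out ($A-B=2(\g^2-a^2)$, $A+B=2\k^2$, so $A^2-B^2=4\k^2(\g^2-a^2)=4a^2\g^2$), and your relation is consistent with the paper's later general formula \eqref{gammacurve}, which reduces to $\g^2=a^2\k^2/(\k^2-a^2)$ when $\a_1+\a_2=0$. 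What your approach buys is strictly more information: an explicit equation for the entire transition curve (not just its asymptotes) and a transparent explanation, via the $\k\leftrightarrow\g$ symmetry, of why both asymptotes land at the same value $2\a_1$ — a feature the paper only observes implicitly. You also correctly flag and dispatch the only real pitfall, namely that the second squaring requires $\k^2+\g^2-a^2\geq 0$, which holds on the genuine solution branch since $\g^2>a^2$ there. The paper's limit-based computation is marginally shorter and reuses machinery already set up for the adjacent lemmas, but your argument is self-contained and arguably cleaner.
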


The first limit is computed exactly as before utilizing $\lim_{\g \to \infty} \sqrt{4a^2\g^2+(a^2-\g^2+\k^2)^2}-\g^2=a^2-\k^2$. Then as $\a_1=-\a_2$ we have $2\a_1=a$ and
\begin{align}
    \k^2-a^2=\lim_{\g \to \infty} \sqrt{4a^2\g^2+(a^2-\g^2+\k^2)^2}-\g^2=a^2-\k^2 \implies \k=a=2\a_1.
\end{align}

The second limit is obtained symmetrically using $\lim_{\k \to \infty} \sqrt{4a^2\g^2+(a^2-\g^2+\k^2)^2}-\k^2=a^2-\g^2$. Therefore we get
\begin{align}
   \g^2-a^2=\lim_{\k \to \infty} \sqrt{4a^2\g^2+(a^2-\g^2+\k^2)^2}-\k^2=a^2-\g^2 \implies \g=a=2\a_1.
\end{align}

\begin{lemma}
    Let $\a_1>-\a_2$. Then the curve $f(\a_1,\a_2,\k,\g)=0$ which yields the phase transition from Active to Amplitude Death states satisfies the following relations in $\g$ and $\k$.
    \begin{align}  
        &f(\a_1,\a_2,\k,\infty)=0 \implies \k=2\a_1,\\
        &f(\a_1,\a_2,\infty,\g)=0 \implies \lim_{\gamma \to \infty}\k(\g) \sim \frac{(\a_1+\a_2)^2+\g^2-a^2}{2(\a_1+\a_2)}.        
    \end{align}
\end{lemma}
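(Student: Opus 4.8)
The plan is to treat the two asymptotic relations separately, since they describe two different branches of the multi-valued curve $f(\a_1,\a_2,\k,\g)=0$: the horizontal asymptote $\k=2\a_1$ is reached along the branch on which $\k$ stays bounded as $\g\to\infty$, whereas the quadratic growth is the behavior of the branch on which $\k$ diverges together with $\g$. Throughout I abbreviate $S:=\sqrt{4a^2\g^2+(a^2-\g^2+\k^2)^2}$ and $s:=\a_1+\a_2>0$, and I use the standing solvability condition $\k>s$ obtained by isolating the outer radical in $f=0$, namely
\[
\sqrt{\tfrac12\bigl(S+a^2-\g^2+\k^2\bigr)}=\k-s.
\]

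For the horizontal asymptote I would reuse verbatim the identity method of the two preceding lemmas. Squaring the displayed equation gives $S=2(\k-s)^2-a^2+\g^2-\k^2$, hence $S-\g^2=2(\k-s)^2-a^2-\k^2$. Holding $\k$ bounded and letting $\g\to\infty$, the left-hand side tends to $a^2-\k^2$ by the already established identity $\lim_{\g\to\infty}\bigl(S-\g^2\bigr)=a^2-\k^2$. Equating the two sides collapses to $2a^2=2(\k-s)^2$, and the admissible positive root $\k-s=a=\a_1-\a_2$ yields $\k=s+a=2\a_1$, as claimed.

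The substantive part is the quadratic asymptote, where the naive fixed-$\k$ identity is unavailable because $\k$ and $\g$ diverge together along the branch. Instead I would eliminate the nested radical entirely: squaring $S=2(\k-s)^2-a^2+\g^2-\k^2$ once more and cancelling the common $S^2=A^2+4a^2\g^2$ terms (with $A:=a^2-\g^2+\k^2$) produces, after factoring, the clean polynomial constraint
\[
(\k-s)^2\bigl(\g^2-a^2+s^2-2s\k\bigr)=a^2\g^2,
\]
valid identically on the curve. Setting $\delta:=\g^2-a^2+s^2-2s\k$, this reads $(\k-s)^2\delta=a^2\g^2$, and a dominant-balance argument finishes: on the diverging branch $(\k-s)^2\to\infty$, so $\delta=a^2\g^2/(\k-s)^2$ must be lower order. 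Feeding back the leading guess $\k\sim\g^2/(2s)$ gives $(\k-s)^2\sim\g^4/(4s^2)$ and hence $\delta\sim 4a^2s^2/\g^2\to0$; this self-consistently confirms $\delta\to0$, and solving $\delta=0$ to leading order yields exactly $\k\sim\frac{s^2+\g^2-a^2}{2s}=\frac{(\a_1+\a_2)^2+\g^2-a^2}{2(\a_1+\a_2)}$.

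The main obstacle is precisely this second limit: one must resist applying the symmetric $\k\to\infty$ identity used in the $\a_1=-\a_2$ lemma, since here $\g$ is not held fixed. The two points to get right are (i) deriving the \emph{exact} polynomial constraint so that no spurious radical branch is introduced, and (ii) justifying that the correction $\delta$ is genuinely $o(1)$ — that is, verifying the dominant balance is self-consistent rather than merely assumed — which is what pins down both the quadratic leading coefficient $1/(2s)$ and the additive constant. The hypothesis $\a_1>-\a_2$, i.e. $s=\a_1+\a_2>0$, is exactly what keeps the leading balance $2s\k\sim\g^2$ nondegenerate, forcing the branch to grow like $\g^2$ rather than approach a finite vertical asymptote as occurred in the previous $s=0$ case.
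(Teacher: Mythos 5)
Your proof is correct. The first limit is handled exactly as the paper does it (the paper simply says it is ``identical to the previous two cases'' and reuses the fixed-$\k$ identity $\lim_{\g\to\infty}\bigl(\sqrt{4a^2\g^2+(a^2-\g^2+\k^2)^2}-\g^2\bigr)=a^2-\k^2$, which is precisely your computation). For the second limit, however, you take a genuinely different and more defensible route. The paper formally invokes the symmetric identity $\lim_{\k\to\infty}\bigl(\sqrt{4a^2\g^2+(a^2-\g^2+\k^2)^2}-\k^2\bigr)=a^2-\g^2$ and equates it to $-4\k(\a_1+\a_2)+2(\a_1+\a_2)^2-a^2+\g^2$, then solves for $\k(\g)$; as you point out, this is only heuristic on the branch in question, since $\g$ is not held fixed there and that left-hand side has no finite limit in $\k$ alone. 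Your elimination of the nested radical to the exact algebraic identity $(\k-s)^2\bigl(\g^2-a^2+s^2-2s\k\bigr)=a^2\g^2$ with $s=\a_1+\a_2$, followed by the self-consistent dominant balance $\delta=a^2\g^2/(\k-s)^2\to 0$, reaches the same asymptote rigorously, and has the added benefit of displaying both branches of the multivalued curve (the bounded one with $\k-s\to a$ and the quadratically growing one) inside a single cubic relation in $\k$. If you wanted to close the last epsilon of rigor, you could note that for each large $\g$ this cubic has exactly one root with $\k\to\infty$, which removes any doubt that the balance you verify is the one actually realized; but as stated your argument already improves on the paper's.
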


The first limit is identical to the previous two cases. The second limit provides the shape of the upper part of the Amplitude Death transition curve seen in Figures \ref{fig:Kur2a1>-a2>0}-\ref{fig:Kur2a1>a2>0}. Indeed, it is clear that the curve diverges to infinity, but the limit provides us with the fact that $\k(\g) \sim \g^2$ as $\k \to \infty$. Again utilizing $\lim_{\k \to \infty} \sqrt{4a^2\g^2+(a^2-\g^2+\k^2)^2}-\k^2=a^2-\g^2$ we get
\begin{align}
    \lim_{\k \to \infty} -4\k(\a_1+\a_2)+2(\a_1+\a_2)^2-a^2+\g^2=\lim_{\k \to \infty} \sqrt{4a^2\g^2+(a^2-\g^2+\k^2)^2}-\k^2=a^2-\g^2.
\end{align}
solving for $\k(\g)$ gives the asymptotic behavior as
\begin{align}
    \k(\g) \sim \frac{(\a_1+\a_2)^2+\g^2-a^2}{2(\a_1+\a_2)}.
\end{align}

Finally,we can see that the limit $\a_2\nearrow \a_1$ yields
\begin{align}
    f(\a,\a,\k,\g)=2\a-\k+\sqrt{\k^2-\g^2}=0,
\end{align}
which provides the exact condition for $\k\geq 2\a$, the top part of the Amplitude Death transition curve is given by $\k^*(\g)=\frac{4\a^2+\g^2}{4\a}$ in the $\a$-homogeneous regime. However, due to this transition being discontinuous, we see that we lose the lower part of the boundary where $\k=2\a$ for $\k\leq \g$.

Further, knowledge of this curve allows us to prove the following result.
\begin{theorem}\label{t:ADstab}
    If $\a_1>\a_2$ and $f(\a_1,\a_2,\k,\g)>0$, then there exists two Amplitude Death Phase-Locked states of the system \eqref{eq:SL2het1}-\eqref{eq:SL2het2} given by $r_1=0=r_2$ and
     \begin{align}
    &Y_{\infty}^+=\frac{a+\i\g+\sqrt{(a+\i\g)^2+\k^2}}{\k},\\
        &Y_{\infty}^-=\frac{a+\i\g-\sqrt{(a+\i\g)^2+\k^2}}{\k}.
    \end{align}
    where $Y_{\infty}^+$ is stable and $Y_{\infty}^-$ is unstable.
\end{theorem}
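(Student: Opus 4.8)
The plan is to read off both asserted states from the complex ratio $Z=z_1/z_2$, whose evolution is the Riccati equation \eqref{eq:SLZhet}. Each state is amplitude death, $r_1=r_2=0$, decorated with a limiting phase-locked direction $Z\to Y_\infty^{\pm}$, so the proof has two ingredients: showing that in the regime $f>0$ the oscillators die and $l=r_1^2-r_2^2\to 0$ exponentially, and classifying the fixed points of the resulting autonomous Riccati field $F(Y)=(a+\i\g)Y+\frac{\k}{2}(1-Y^2)$ together with their stability.

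The step I expect to be the main obstacle is the amplitude-death part, because $\a_1>0$ is now allowed: in contrast to the purely subcritical Theorem~\ref{t:SLa2<a1<0}, the larger oscillator does not decay for free, so death must be forced by the coupling, and the amplitude decay is entangled with the phase-locking through the time-dependent coefficient $l$ in \eqref{eq:SLZhet}. The mechanism is that in the phase-locked regime $r_2$ satisfies a Hopf equation with effective parameter $\a_2+\frac{\k}{2}(\Re(Z)-1)$, whose vanishing (at $Z=Y_\infty^+$) is exactly what defined the curve $f=0$ in Section~\ref{ss:i1}; on the amplitude-death side this effective parameter is strictly negative. I would make this rigorous either by linearising \eqref{eq:SL2het1}-\eqref{eq:SL2het2} at the origin, whose $2\times2$ coefficient matrix has eigenvalues $\frac{1}{2}[(\a_1+\a_2-\k)\pm\sqrt{(a+\i\g)^2+\k^2}]$ with eigendirections precisely $Y_\infty^{\pm}$, so that in the death region both eigenvalues are stable and the origin is locally exponentially attracting; or by a bootstrap that traps $Z$ near $Y_\infty^+$ using the invariant-region analysis of \eqref{eq:SLRhet}, rendering the effective Hopf parameter negative. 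Either way one obtains $r_1,r_2\to 0$ and hence $l\to 0$ exponentially.

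Once $l\to 0$ exponentially, \eqref{eq:SLZhet} reads $\ddt Z=F(Z)+E(t)$ with $E(t)$ exponentially small, which is precisely the situation handled for Theorem~\ref{t:SLa2<a1<0}. Solving $F(Y)=0$ by the quadratic formula yields the two fixed points $Y_\infty^{\pm}$ of the statement, and the linearisation $DF(Y)=(a+\i\g)-\k Y$ gives $DF(Y_\infty^{\pm})=\mp\sqrt{(a+\i\g)^2+\k^2}$. As in the earlier proof, since $a,\g>0$ the number $(a+\i\g)^2+\k^2$ lies in the open upper half-plane, so its principal square root has argument in $(0,\frac{\pi}{2})$ and thus strictly positive real part; hence $Y_\infty^+$ is linearly stable and $Y_\infty^-$ linearly unstable for the $Z$-flow.

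Finally I would transfer this hyperbolic picture to the genuine nonautonomous dynamics exactly as in Theorem~\ref{t:SLa2<a1<0}: the exponentially decaying perturbation $E(t)$ preserves the stable/unstable splitting, so by Duhamel's principle (equivalently a stable-manifold argument) every trajectory off the one-dimensional unstable manifold of $Y_\infty^-$ converges to $Y_\infty^+$. Combined with $r_1=r_2=0$, this exhibits the two amplitude-death phase-locked states with the asserted stability, the only genuinely new work compared to the subcritical case being the quenching of the supercritical oscillator in the second paragraph.
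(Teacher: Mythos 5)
Your argument is correct and rests on the same two pillars as the paper's proof: the classification of the Riccati fixed points via $DF(Y_\infty^\pm)=\mp\sqrt{(a+\i\g)^2+\k^2}$ with $\Re\sqrt{(a+\i\g)^2+\k^2}>0$, and the sign of the effective Hopf parameters at the death state. The organization, however, is genuinely different: the paper works in the blown-up coordinates $(r_1,r_2,\Re Z,\Im Z)$ and reads off a block-diagonal Jacobian at $(0,0,Y_\infty^\pm)$ whose radial entries are $\a_1+\frac{\k}{2}(\Re(1/Y_{\infty})-1)$ and $\a_2+\frac{\k}{2}(\Re(Y_{\infty})-1)$, whereas you linearize the original complex $2\times2$ system at the origin. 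Your route is arguably tidier and buys a unified picture: the eigenvalues $\frac{1}{2}[(\a_1+\a_2-\k)\pm\sqrt{(a+\i\g)^2+\k^2}]$ do have eigendirections $z_1/z_2=Y_\infty^\pm$ as you claim, and a short computation shows that both of the paper's radial Jacobian entries at $Y_\infty^+$ collapse to $\Re\lambda_+=\frac{1}{2}f$, so your single linearization encodes the paper's entire diagonal block, and the instability of $Y_\infty^-$ becomes the transparent statement that the ratio flow is repelled from the faster-decaying eigendirection. Incidentally, this computation exposes a sign slip in the paper: since $\a_2+\frac{\k}{2}(\Re(Y_\infty^+)-1)=\frac{1}{2}f$, the amplitude-death side of the curve is literally $f<0$ rather than $f>0$ as written in the statement and in the paper's claim that $f>0$ forces these quantities to be negative; your phrasing in terms of ``the amplitude-death side of the curve $f=0$'' is the correct reading and your argument is unaffected. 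The only caveats: the theorem asserts only existence and linear stability, so the global convergence machinery in your first and last paragraphs (the bootstrap alternative, the Duhamel/stable-manifold transfer) is more than is needed and more than the paper proves here; the local linearization at the origin already delivers everything required.
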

\begin{proof}
    First, note that Amplitude Death, $r_1=0=r_2$, always represents a fixed point of the system \eqref{eq:SL2het1}-\eqref{eq:SL2het2}. Therefore, as in the previous section $Y_{\infty}^+$ and $Y_{\infty}^-$ both represent phase-locked states for the variable $Z=\frac{z_1}{z_2}$. In order to prove the stability/instability of the two fixed points let us write the four equations we need in order to study the linear stability of the fixed points. Letting $RZ=\Re(Z)$ and $IZ=\Im(Z)$
    \begin{align}
        &\ddt r_1=(\a_1+\frac{\k}{2}(\frac{RZ}{RZ^2+IZ^2}-1)-r_1^2)r_1,\\
        &\ddt r_2=(\a_2+\frac{\k}{2}(RZ-1)-r_2^2)r_2,\\
        &\ddt RZ=(a-r_1^2+r_2^2)RZ-\g IZ+\frac{\k}{2}(1-(RZ)^2+(IZ)^2),\\
        &\ddt IZ=(a-r_1^2+r_2^2)IZ+\g IZ-\k (RZ)(IZ).
    \end{align}

    Now let $F(r_1,r_2,RZ,IZ)$ be the fixed point map of the above equations. Computing the Jacobian of $F$ at the fixed points will give us the linear stability.
\[
J=\begin{bmatrix}
\a_1+\frac{\k}{2}(\Re(1/Y_{\infty})-1) & 0 & 0 & 0  \\
0 & \a_2+\frac{\k}{2}(\Re(Y_{\infty})-1) & 0 & 0 \\
0 & 0 & a-\k\Re(Y_{\infty}) & -\g+\k\Im(Y_{\infty}) \\
0 & 0 & \g-\k\Im(Y_{\infty}) & a-\k\Re(Y_{\infty}) \\
\end{bmatrix}
\]
The condition $f(\a_1,\a_2,\k,\g)>0$ is exactly the condition that guarantees both $\a_1+\frac{\k}{2}(\Re(1/Y_{\infty}^+)-1)<0$ and $\a_2+\frac{\k}{2}(\Re(Y_{\infty}^+)-1)<0$. Therefore the first two eigenvalues are negative and the last two are given by the same stability analysis in the previous section guaranteeing the stability of $Y_{\infty}^+$ and the instability of $Y_{\infty}^-$.
\end{proof}

Note that the above proof also shows that if $f(\a_1,\a_2,\k,\g)<0$, then Amplitude death is unstable and the system remains active. Let us now proceed to the next section.

\subsection{Mixed oscillators--Stability of the Active Phase-Locked states}\label{ss:mixed} Now that we have the equation for the curve separating the Active and Amplitude Death states we can provide a rigorous analysis of the existence, uniqueness and stability of Active Phase-Locked state when we have one supercritical oscillator $(\a_1>0)$ and one subcritical oscillator $(\a_2\leq 0)$. This subsection will provide the proofs of stability of the Active Phase-Locked regimes found in Figures \ref{fig:Kur2-a2>a1>0}-\ref{fig:Kur2a1>-a2>0}.  In particular, with $\a_1>0\geq\a_2$ the active state is recovered whenever $f(\a_1,\a_2,\k,\g)<0$. However, due to the subcriticality of the second oscillator, $\a_2\leq0$, the phase behavior will be asymptotic phase-locking for all parameter values $\k,\g>0$. We have already seen that Phase-Locking is stable in the Amplitude Death regime where $f(\a_1,\a_2,\k,\g)>0$. We begin with existence and uniqueness of the Active Phase-Locked state whenever $f(\a_1,\a_2,\k,\g)<0$.

\begin{theorem}\label{t:mixact}
    Let $\a_1>0\geq \a_2$. If $f(\a_1,\a_2,\k,\g)<0$, then there exists a unique linearly stable Active Phase-Locked State to \eqref{eq:SL2het1}-\eqref{eq:SL2het2} within the following region:
    \begin{align}
        l_{\infty} \in (0,a), \ \ R_{\infty} \in(R_{\infty}^-,R_{\infty}^+), \ \ \Phi_{\infty}\in (0,\frac{\pi}{2}),
    \end{align}
    with $1\leq R_{\infty}^-\leq R_{\infty}^+\leq \infty$ to be defined later.
If $f(\a_1,\a_2,\k,\g)>0$, then the Amplitude Death Phase-Locked State is stable.
\end{theorem}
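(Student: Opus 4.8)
The plan is to realize the active phase-locked state as a genuine fixed point of the rotation-reduced flow and then linearize. The assertion for $f(\a_1,\a_2,\k,\g)>0$ is nothing but the stability statement of Theorem~\ref{t:ADstab}, so I would dispose of it by citation and concentrate on the regime $f(\a_1,\a_2,\k,\g)<0$. I would work in the reduced coordinates $(r_2,Z)$ with $Z=\frac{z_1}{z_2}=Re^{\i\Phi}$, in which the quotient state space is three-dimensional and the dynamics are governed by the complex Riccati equation \eqref{eq:SLZhet} together with the scalar equation $\dot r_2=(\a_2+\frac{\k}{2}(\Re(Z)-1)-r_2^2)r_2$. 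A fixed point must satisfy $\dot Z=0$ and $\dot r_2=0$; writing the latter as $r_2^2=\a_2+\frac{\k}{2}(\Re(Z)-1)$ and inserting $l=r_2^2(R^2-1)$ into the stationary Riccati relation fixes the phase through $\dot\Phi=0$ and reduces the whole problem to a single scalar equation in $R$.

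Concretely, combining the stationary forms of \eqref{eq:SLRhet} and of $\dot r_2=0$ gives the scalar profile
\begin{align}
H(R):=(R^2-1)\Big(\a_2-\tfrac{\k}{2}+\tfrac{\k}{2}\cos\Phi(R)\big(R+\tfrac1R\big)\Big)-a,\qquad \sin\Phi(R)=\frac{\g}{\tfrac{\k}{2}\big(R+\tfrac1R\big)},
\end{align}
and I would look for a root $H(R_\infty)=0$ with $\Phi(R_\infty)\in(0,\tfrac\pi2)$. Existence follows from the intermediate value theorem: at the lower end of the admissible set $H<0$ (indeed $H(1)=-a<0$, and when $\g>\k$ a real angle $\Phi$ only exists once $\frac{\k}{2}(R+R^{-1})\ge\g$, i.e. for $R\ge R_\infty^-$), while $\cos\Phi(R)\to1$ forces $H(R)\to+\infty$ as $R\to\infty$. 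The bounds $R_\infty^\pm$ in the statement are exactly the endpoints carving out $\Phi\in(0,\tfrac\pi2)$ and the positivity threshold $r_2^2>0$. Here the sign of $f$ enters: the curve $f=0$ of \eqref{curve1} was derived in Interlude~1 precisely as the locus $r_2^2=0$, so $f<0$ is what guarantees that the self-consistent root yields $r_2^2>0$ (active) rather than $r_2=0$ (amplitude death). Positivity of $r_2$ then gives $r_1^2=R_\infty^2 r_2^2>0$ and $l_\infty=r_2^2(R_\infty^2-1)\in(0,a)$, the upper bound $l_\infty<a$ coming directly from $a-l_\infty=\frac{\k}{2}\cos\Phi\,\frac{R^2-1}{R}>0$.

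For uniqueness I would show that $H$ changes sign exactly once on $(R_\infty^-,R_\infty^+)$, and for linear stability I would compute the Jacobian of $(r_1,r_2,\Re Z,\Im Z)$ at the fixed point exactly as in the matrix of Theorem~\ref{t:ADstab}: the $Z$-block is the linearization of the Riccati field at its stable root and has eigenvalue $-\Re\sqrt{((a-l_\infty)+\i\g)^2+\k^2}<0$ (the positivity of this real part being the very argument already run for $Y_\infty^+$), while the two amplitude directions are stabilized by the transversality $H'(R_\infty)>0$. Since the reduced flow is essentially slaved — Theorem~\ref{t:ADtoPL} together with the Riccati analysis shows that $Z$ is driven to $Z_\infty(l)$ once the amplitude asymmetry $l$ settles — I would close the argument with a trapping-region estimate on $\{\,l\in(0,a),\ \Phi\in(0,\tfrac\pi2),\ R\in(R_\infty^-,R_\infty^+)\,\}$, whose forward invariance is checked on the boundary via \eqref{eq:SLRhet} and \eqref{eq:SLphihet}, thereby upgrading linear stability to local attraction and ruling out a second fixed point. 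The main obstacle I anticipate is precisely the monotonicity input $H'(R_\infty)>0$: because $\cos\Phi(R)$ is itself a nontrivial decreasing function of $R$, positivity of $H'$ is not visible term by term and must be extracted from a careful sign analysis of the derivative of the bracket (equivalently, of the self-consistency map $l\mapsto a+\frac{\k}{2}(\Re(1/Z_\infty(l))-\Re(Z_\infty(l)))$), and this same quantity is what simultaneously controls the amplitude block of the linearization, so disentangling the coupled Jacobian hinges on it.
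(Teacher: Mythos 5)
Your overall skeleton matches the paper's: dispose of $f>0$ by Theorem~\ref{t:ADstab}, reduce the stationary problem to a single scalar equation in $R_\infty$ by eliminating $l_\infty$ and $\Phi_\infty$, get existence from endpoint signs, uniqueness from monotonicity, and stability from a Jacobian. Your profile $H(R)=(R^2-1)\bigl(\a_2-\tfrac{\k}{2}+\tfrac{\k}{2}\cos\Phi(R)\bigl(R+\tfrac1R\bigr)\bigr)-a$ is algebraically equivalent to the paper's reduction. However, there are two genuine gaps, both at the steps that carry the real weight. First, uniqueness: you defer the monotonicity input $H'(R_\infty)>0$ and correctly identify it as the main obstacle, but this is precisely what the paper resolves by \emph{splitting} the scalar equation as $g(R)=h(R)$ with $g(R)=\sqrt{1-\bigl(\tfrac{2\g}{\k(R+1/R)}\bigr)^2}$ strictly increasing and $h(R)=\tfrac{R^3(\k-2\a_2)+R(2\a_1-\k)}{\k(R^4-1)}$ strictly decreasing on $(R_\infty^-,R_\infty^+)$ (Lemma~\ref{l:gh}); the decrease of $h$ when $\k>2\a_1$ is itself nontrivial and uses the bounds \eqref{in:xbound1}--\eqref{in:xbound2}. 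Without some such factorization your $H$ could a priori have several sign changes, and your IVT argument on $(R_\infty^-,\infty)$ does not by itself place the root below the finite cap $R_\infty^+=\sqrt{\tfrac{\k-2\a_2}{\k-2\a_1}}$ when $\k>2\a_1$. Relatedly, the hypothesis $f<0$ enters the paper quantitatively at the right endpoint, via the bound \eqref{in:gbound} on $\g^2$ that forces $g(x^+)>h(x^+)$; your use of $f<0$ only as a qualitative assertion that ``the root has $r_2^2>0$'' is a plausible continuity statement but is not established.

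Second, stability: your proposed block structure — a Riccati $Z$-block with eigenvalue $-\sqrt{((a-l_\infty)+\i\g)^2+\k^2}$ plus amplitude directions controlled by $H'(R_\infty)>0$, glued by a ``slaving'' claim — does not hold at an active fixed point, because $l$ is dynamical and feeds back into the $Z$-equation through $(a+\i\g-l)Z$ while $Z$ feeds back into the amplitude equations through $\cos\Phi$ and $R$. The linearization is genuinely coupled: in the paper's $(l,R,\Phi)$ coordinates the Jacobian has nonzero entries $\partial_R F_l$, $\partial_l F_R$, $\partial_\Phi F_R$, $\partial_R F_\Phi$, and stability is obtained only after verifying that all coefficients of the full $3\times3$ characteristic polynomial are negative (Routh--Hurwitz), which again leans on the same inequalities \eqref{in:xbound1}--\eqref{in:xbound2}. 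The slaving heuristic is exact only in the amplitude-death regime, where $l\to0$ and the Riccati equation decouples (that is Theorem~\ref{t:SLa2<a1<0}); here it cannot replace the coupled computation. Your trapping-region idea for upgrading to local attraction is not needed for the statement and is not carried out, so I would drop it and instead supply the monotone factorization and the full Jacobian analysis.
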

\begin{proof}
    Theorem \ref{t:ADstab} provides stability of the Amplitude Death Phase-Locked state in the case of $f(\a_1,\a_2,\k,\g)>0$. Now suppose $f(\a_1,\a_2,\k,\g)<0$. Again, by Theorem \ref{t:ADstab} we know that the Amplitude Death state is unstable and hence we are searching for the existence of Active Phase-Locked states.\\

    \textit{Existence and Uniqueness:}\\

    We proceed by noticing that the relative dynamics of each $z_1$ and $z_2$ satisfying \eqref{eq:SL2het1}-\eqref{eq:SL2het2} can be characterized completely by three variables: $R=\frac{r_1}{r_2},\Phi=\phi_1-\phi_2,l=r_1^2-r_2^2$, with the following equations
    \begin{align}
        \ddt R&=(a-l)R+\frac{\k}{2}\cos(\Phi)(1-R^2),\label{eq:Rhet}\\
        \ddt \Phi&=\g-\frac{\k}{2}(R+\frac{1}{R})\sin(\Phi),\label{eq:Phihet}\\
        \ddt l&=2(\a_1r_1^2-\a_2r_2^2)-l(\k+2(r_1^2+r_2^2)). \label{eq:lhet}
    \end{align}
    Indeed, we can see that $l$ and $R$ characterize $r_1$ and $r_2$ as $r_1=r_2R$ and $r_2^2=\frac{l}{R^2-1}$. Therefore obtaining $l$ and $R$ yield $r_1$ and $r_2$. Last, $\Phi$ grants the phase difference between the two oscillators which is sufficient to fully characterize the phase-locked state.

    We begin with the equation on $l$, \eqref{eq:lhet}. As $f(\a_1,\a_2,\k,\g)<0$, we seek steady state solutions to \eqref{eq:lhet} such that $l>0$. Indeed, as the system is active and $\a_1>\a_2$, then a steady state of \eqref{eq:Rhet} with $l=0$ implies that $R>1$ and hence $r_1=r_2=0$ an amplitude death state. Setting the right side of \eqref{eq:lhet} equal to zero gives an equation of a hyperbola in the variables $r_1^2, r_2^2$
    \begin{align}
        H(r_1^2,r_2^2):=(2\a_1-\k)r_1^2-2r_1^4-(2\a_2-\k)r_2^2+2r_2^4=0.\label{eq:Hyper}
    \end{align}
    Only a particular portion of this hyperbola is relevant to the fixed points we are investigating. Indeed, any steady state within the current regime requires that $r_2^2<r_1^2<\a_1$. Hence the critical points of $l$ are those such that $0<r_2^2<r_1^2<\a_1$ and $H(r_1^2,r_2^2)=0$.\\

    Note that this provides bounds on the asymptotic value $l$ can take.
    \begin{lemma}\label{l:lbound}
        Let $\a_1>0\geq \a_2$. Then any Phase-Locked state of \eqref{eq:SL2het1}-\eqref{eq:SL2het2} has a bound on the final state of the variable $l_{\infty}=r_{1,\infty}^2-r_{2,\infty}^2$,
        \begin{align}
            l_{\infty} \in [0,\a_1) \subset [0,a),
        \end{align}
        where $l_{\infty}=0$ only in the case of $f(\a_1,\a_2,\k,\g)>0$ which coincides with Amplitude Death.
    \end{lemma}

    Now for the variables $l_{\infty}, R_{\infty}, \Phi_{\infty}$, we are seeking solutions to the following three equations

    \begin{align}
        0&=(a-l_{\infty})R_{\infty}+\frac{\k}{2}\cos(\Phi_{\infty})(1-R_{\infty}^2),\label{eq:Rhetstat}\\
        0&=\g-\frac{\k}{2}(R_{\infty}+\frac{1}{R_{\infty}})\sin(\Phi_{\infty}),\label{eq:Phihetstat}\\
        0&=2(\a_1r_{1,\infty}^2-\a_2r_{2,\infty}^2)-l_{\infty}(\k+2(r_{1,\infty}^2+r_{2,\infty}^2)). \label{eq:lhetstat}
    \end{align}

    Now, starting with \eqref{eq:lhetstat} we plug in for $r_{1,\infty}^2=\frac{l_{\infty}R_{\infty}^2}{R_{\infty}^2-1}$ and $r_{2,\infty}^2=\frac{l_{\infty}}{R_{\infty}^2-1}$. Taking advantage of the hyperbola formulation in \eqref{eq:Hyper} we can solve for $l_{\infty}$ in terms of $R_{\infty}$
    \begin{align}
        (2\a_1-\k)\frac{l_{\infty}R_{\infty}^2}{R_{\infty}^2-1}-2\left(\frac{l_{\infty}R_{\infty}^2}{R_{\infty}^2-1}\right)^2&=(2\a_2-\k)\frac{l_{\infty}}{R_{\infty}^2-1}-2\left(\frac{l_{\infty}}{R_{\infty}^2-1}\right)^2,\\
        l_{\infty}&=\frac{(2\a_1-\k)R_{\infty}^2+(\k-2\a_2)}{2(R_{\infty}^2+1)}.\label{eq:ltoR}
    \end{align}

    Note, that depending on the relation between $\k$ and $\a_1$, the equation \eqref{eq:ltoR} can provide an upper bound on the value $R_{\infty}$ can take. In Lemma \ref{l:lbound} we see that $l_{\infty} \in [0,a)$ and hence we can see that the following relation must hold,
    \begin{align}
        0\leq \frac{(2\a_1-\k)R_{\infty}^2+(\k-2\a_2)}{2(R_{\infty}^2+1)}<a.
    \end{align}
    If $\k\leq 2\a_1$, then clearly $0\leq \frac{(2\a_1-\k)R_{\infty}^2+(\k-2\a_2)}{2(R_{\infty}^2+1)}$ holds. And further solving the second half of the inequality yields the condition
    \begin{align}
        R_{\infty}^2>\frac{2\a_1-\k}{2\a_2-\k},
    \end{align}
    which always holds as the RHS is nonpositive. However, if $\k>2\a_1$, then we get an upper bound from the condition
    \begin{align}
        0\leq (2\a_1-\k)R_{\infty}^2+(\k-2\a_2) \implies R_{\infty}\leq \sqrt{\frac{\k-2\a_2}{\k-2\a_1}}.
    \end{align}
Therefore let us define
    \[
    R_{\infty}^+=\begin{cases}
        \infty & \text{if} \ \k \leq 2\a_1, \\
       \sqrt{\frac{\k-2\a_2}{\k-2\a_1}}  & \text{if} \ \k > 2\a_1.
    \end{cases}
    \]

Continuing to the phase equation, let us observe that \eqref{eq:Phihetstat} gives us the following relation
\begin{align}
    \cos(\Phi_{\infty})=\sqrt{1-\left(\frac{2\g}{\k(R_{\infty}+\frac{1}{R_{\infty}})}\right)^2}:=g(R_{\infty}).\label{eq:PhitoR}
\end{align}
where we have chosen the positive square root as we are seeking solutions such that $\Phi_{\infty}\in(0,\frac{\pi}{2})$. Further, note that as we are seeking solutions such that $R_{\infty}>1$, we potentially have a further refinement on the domain depending on the relationship between $\k$ and $\g$ in order to guarantee that the square root is well-defined. Indeed,

\begin{align}
    1-\left(\frac{2\g}{\k(R_{\infty}+\frac{1}{R_{\infty}})}\right)^2>0
\end{align}
is required to hold. Rearranging terms gives the quadratic inequality,
\begin{align}
    R_{\infty}^2-\frac{2\g}{\k}R_{\infty}+1>0.
\end{align}
Hence if $\k\geq \g$, then the inequality is always satisfied. If $\k<\g$, then $R_{\infty}$ must be increased beyond the second root of the quadratic to guarantee that the square root is well-defined. Indeed, we get a new minimum value for which $R_{\infty}>R_{\infty}^-$ must hold. We define
\[
    R_{\infty}^-=\begin{cases}
        1 & \text{if} \ \k \geq \g, \\
       \frac{\g}{\k}+\frac{1}{\k}\sqrt{\g^2-\k^2}  & \text{if} \ \k <\g.
    \end{cases}
    \]

    Now let us use $g(R_{\infty})$ and the equation \eqref{eq:ltoR} within \eqref{eq:Rhetstat} to get
\begin{align}
    0=R_{\infty}^3(\k-2\a_2)+R_{\infty}(2\a_1-\k)+\k g(R_{\infty})(1-R_{\infty}^4). \label{eq:Rquadr}
\end{align}
In order to prove existence of a solution to \eqref{eq:Rquadr} we isolate $g(R_{\infty})$,
\begin{align}\label{eq:gh}
    g(R_{\infty})=\frac{R_{\infty}^3(\k-2\a_2)+R_{\infty}(2\a_1-\k)}{\k(R_{\infty}^4-1)}:=h(R_{\infty}).
\end{align}
Therefore we wish to prove the existence of an $R_{\infty} \in (R_{\infty}^-,R_{\infty}^+)$ such that $g(R_{\infty})=h(R_{\infty})$.
The following lemma gives us useful qualitative information about the two functions $g(x)$ and $h(x)$.
\begin{lemma}\label{l:gh}
    If $g(x):=\sqrt{1-\left(\frac{2\g}{\k(x+\frac{1}{x})}\right)^2}$, and $h(x):=\frac{x^3(\k-2\a_2)+x(2\a_1-\k)}{\k(x^4-1)}$. Then letting
    \[
    x^-=\begin{cases}
        \frac{\g}{\k}+\frac{1}{\k}\sqrt{\g^2-\k^2} & \text{if} \ \k < \g, \\
        1 & \text{if} \ \k\geq \g,
    \end{cases}
    \]
    and
     \[
    x^+=\begin{cases}
        \infty & \text{if} \ \k\leq 2\a_1, \\
        \sqrt{\frac{\k-2\a_2}{\k-2\a_1}}  & \text{if} \ \k > 2\a_1,
    \end{cases}
    \]
    then the following holds
    \begin{align}
        &\lim_{x\to (x^-)^+}g(x)=c_1< c_2=\lim_{x\to (x^-)^+}h(x), \label{leftlim}\\
        &\lim_{x \to (x^+)^-} g(x)=c_3>c_4=\lim_{x \to (x^+)^-} h(x). \label{rightlim}
    \end{align}
    further $g'(x)>0$ and $h'(x)<0$ on the domain $(x^-,x^+)$.
\end{lemma}
Note that proving Lemma \ref{l:gh} automatically grants existence (via proving the limits) and uniqueness (via the strict derivative behavior) of an Active Phase-Locked state.
\begin{proof}
Let us begin with the derivative behavior first. As $x+\frac{1}{x}$ is strictly increasing and differentiable on $x>1$ it is clear that $g(x)$ is strictly increasing on its domain $(x^-,x^+)$ and hence $g'(x)>0$. To see the decrease of $h(x)$ let us differentiate.
\begin{align}\label{eq:hder}
    h'(x)=\frac{1}{\k(x^4-1)^2}\left((\k-2\a_2)(-x^6-3x^2)+(2\a_1-\k)(-3x^4-1)\right).
\end{align}
Now if $\k \leq 2\a_1$, then $h'(x)<0$ for all $x>1$. Now suppose $\k>2\a_1$ which gives the wrong sign for the quartic and constant coefficients. However, for $\k>2\a_1$ we have $x^+=\sqrt{\frac{\k-2\a_2}{\k-2\a_1}}$ which implies that for $x<x^+$ we have the following bound,
\begin{align}\label{in:xbound1}
    x^2(2\a_1-\k)+(\k-2\a_2)>0.
\end{align}
Further, we also always have
\begin{align}\label{in:xbound2}
    (2\a_1-\k)+x^2(\k-2\a_2)>0.
\end{align}

Therefore we can group the terms in \eqref{eq:hder} and use \eqref{in:xbound1} and \eqref{in:xbound2} to obtain,
\begin{align}
    h'(x)=\frac{1}{\k(x^4-1)^2}\left(\left((2\a_1-\k)+x^2(\k-2\a_2) \right)(-x^4-1)+\left(x^2(2\a_1-\k)+(\k-2\a_2)\right)(-2x^2)\right)<0.\\
    \nonumber
\end{align}
Continuing to the limiting behavior to guarantee the existence of a point where $g(x)=h(x)$.

Let us begin with the left limits. Suppose $\k \geq \g$, then $x^-=1$ and
\begin{align}
    &\lim_{x \to 1^+} g(x)=c_1=\sqrt{1-\left(\frac{\g}{\k}\right)^2} \in [0,1),\\
    &\lim_{x \to 1^+} h(x)=\lim_{x \to 1^+}\frac{x^3(\k-2\a_2)+x(2\a_1-\k)}{\k(x^4-1)}=\lim_{x \to 1^+} \frac{2a}{\k(x^4-1)}=+\infty>c_1.
\end{align}

On the other hand if $\k<\g$, then $x^-=\frac{\g}{\k}+\frac{1}{\k}\sqrt{\g^2-\k^2}>1$ and we can simply plug in to get
\begin{align}
    &\lim_{x \to (x^-)^+} h(x)=\frac{(x^-)^3(\k-2\a_2)+(x^-)(2\a_1-\k)}{\k((x^-)^4-1)}=c_2>0,\\
    &\lim_{x \to (x^-)^+} g(x)=c_1=0<c_2
\end{align}
where $c_2>0$ holds from \eqref{in:xbound2}, and $c_1=0$ by construction of $x^-$ being the larger root of the quadratic $x^2-\frac{2\g}{\k}x+1=0$. Therefore \eqref{leftlim} is proved.\\

Now if $\k \leq 2\a_1$, then $x^+=\infty$ and the limits $\lim_{x\to \infty} g(x)=c_3=1$ and $\lim_{x\to \infty} h(x)=c_4=0$ are trivial. Thus the last regime to check is when $\k>2\a_1$, so that $x^+=\sqrt{\frac{\k-2\a_2}{\k-2\a_1}}$.

As $x^+<\infty$ and $g(x)$ and $h(x)$ are both continuous we can simply plug $x^+$ into each of the functions, and what is left to be shown is the inequality
\begin{align}\label{in:ghgoal}
    g(x^+)>h(x^+).
\end{align}

Within this regime we must take advantage of the fact that $f(\a_1,\a_2,\k,\g)<0$ to achieve \eqref{in:ghgoal}.

Let us compute
\begin{align}
    &g(x^+)^2=1-\frac{\g^2}{\k^2}\frac{(\k-2\a_1)(\k-2\a_2)}{(\k-(\a_1+\a_2))^2},\label{g2}\\
    &h(x^+)^2=\frac{1}{\k^2}(\k-2\a_1)(\k-2\a_2)
\end{align}

From this we can see that we need an upper bound on $\g^2$ in order to guarantee \eqref{in:ghgoal}. Indeed, inspection of Figures \ref{fig:Kur2-a2>a1>0}-\ref{fig:Kur2a1>-a2>0} one can see that for $\k>2\a_1$ there is a maximal $\g^*$ such that $f(\a_1,\a_2,\k,\g^*)=0$ where the transition to amplitude death occurs.

Therefore we can find the $\g$ bound using $f(\a_1,\a_2,\k,\g)<0$. We have
\begin{align}
    \k-(\a_1+\a_2)>\sqrt{\frac{1}{2}(\sqrt{4a^2\g^2+(a^2-\g^2+\k^2)^2}+a^2-\g^2+\k^2)}.
\end{align}
Squaring both sides and isolating the interior square root
\begin{align}
    2(\k-(\a_1+\a_2))^2-(a^2-\g^2-\k^2)>\sqrt{4a^2\g^2+(a^2-\g^2+\k^2)^2}.
\end{align}
Squaring again yields
\begin{align}
    4(\k-(\a_1+\a_2))^4-4((\k-(\a_1+\a_2))^2(a^2-\g^2-\k^2)>4a^2\g^2.
\end{align}
Isolating $\g^2$ yields the bound
\begin{align}\label{in:gbound}
    \g^2<\frac{(\k-(\a_1+\a_2))^2}{(\k-2\a_1)(\k-2\a_2)}\left(\k^2-(\k-2\a_1)(\k-2\a_2)\right).
\end{align}
Plugging this bound into \eqref{g2} yields exactly $g(x^+)^2>h(x^+)^2$ establishing \eqref{in:ghgoal} and thus \eqref{rightlim}.
\end{proof}

Lemma \ref{l:gh} establishes the existence and uniqueness of an Active Phase-Locked state whenever $f(\a_1,\a_2,\k,\g)<0$ in the domain
\begin{align}
    &R_{\infty} \in (R_{\infty}^-,R_{\infty}^+),\\
    &l_{\infty} \in (0,a),\\
    &\Phi_{\infty} \in (0,\frac{\pi}{2}),
\end{align}
which satisfy equations \eqref{eq:Rhetstat}-\eqref{eq:Phihetstat}. Finally we must establish the stability of this fixed point via analyzing the Jacobian.\\

\textit{Stability:}\\

Let $F(l_{\infty},R_{\infty},\Phi_{\infty})=(F_l,F_R,F_\Phi)$ with
\begin{align}
    F_l&=\frac{2l_{\infty}}{R_{\infty}^2-1}\left(\a_1R_{\infty}^2-\a_2-l_{\infty}(R_{\infty}^2+1)\right)-\k l_{\infty}\\
    F_R&=(a-l_{\infty})R_{\infty}+\frac{\k}{2}\cos(\Phi_{\infty})(1-R_{\infty}^2),\\
    F_{\Phi}&=\g-\frac{\k}{2}\left(R_{\infty}+\frac{1}{R_{\infty}}\right)\sin(\Phi_{\infty}).
\end{align}

Then $F(l_{\infty},R_{\infty},\Phi_{\infty})$ is a closed system in $l_{\infty},R_{\infty},\Phi_{\infty})$ and computing the Jacobian will allow us to analyze the stability of the unique fixed point found above.

Let us compute the needed derivatives first,
\begin{align*}
    \partial_{l_{\infty}}F_l&=\frac{2}{R_{\infty}^2-1}\left(\a_1R_{\infty}^2-\a_2-l_{\infty}(R_{\infty}^2+1)\right)-\frac{2l_{\infty}}{R_{\infty}^2-1}\left(R_{\infty}^2+1\right)-\k=-\frac{2l_{\infty}}{R_{\infty}^2-1}\left(R_{\infty}^2+1\right),\\
    \partial_{R_{\infty}}F_l&=2l_{\infty}\left(\frac{-2R_{\infty}}{(R_{\infty}^2-1)^2}\right)\left(\a_1R_{\infty}^2-\a_2-l_{\infty}(R_{\infty}^2+1)\right)+\frac{2l_{\infty}}{R_{\infty}^2-1}(2\a_1R_{\infty}-2l_{\infty}R_{\infty}),\\
    &=\frac{2l_{\infty}R_{\infty}}{R_{\infty}^2-1}(2\a_1-2l_{\infty}-\k),\\
    \partial_{\Phi_{\infty}}F_l&=0,\\
    \partial_{l_{\infty}}F_R&=-R_{\infty},\\
    \partial_{R_{\infty}}F_R&=(a-l_{\infty})-\k\cos(\Phi_{\infty})R_{\infty}=-\frac{\k}{2}\left(R_{\infty}+\frac{1}{R_{\infty}}\right)\cos(\Phi_{\infty}),\\
    \partial_{\Phi_{\infty}}F_R&=-\frac{\k}{2}\sin(\Phi_{\infty})(1-R_{\infty}^2),\\
    \partial_{l_{\infty}}F_{\Phi}&=0,\\
    \partial_{R_{\infty}}F_{\Phi}&=-\frac{\k}{2}\sin(\Phi_{\infty})(1-\frac{1}{R_{\infty}^2}),\\
    \partial_{\Phi_{\infty}}F_{\Phi}&=-\frac{\k}{2}\left(R_{\infty}+\frac{1}{R_{\infty}}\right)\cos(\Phi_{\infty}),
\end{align*}
where any simplifications are achieved by plugging in the values at the fixed point.
Now the Jacobian matrix is given by
\[
DF(l_{\infty},R_{\infty},\Phi_{\infty})=\begin{bmatrix}
-\frac{2l_{\infty}}{R_{\infty}^2-1}\left(R_{\infty}^2+1\right) & \frac{2l_{\infty}R_{\infty}}{R_{\infty}^2-1}(2\a_1-2l_{\infty}-\k) & 0 \\
-R_{\infty} & -\frac{\k}{2}\left(R_{\infty}+\frac{1}{R_{\infty}}\right)\cos(\Phi_{\infty}) & -\frac{\k}{2}\sin(\Phi_{\infty})(1-R_{\infty}^2) \\
0 & -\frac{\k}{2}\sin(\Phi_{\infty})(1-\frac{1}{R_{\infty}^2}) & -\frac{\k}{2}\left(R_{\infty}+\frac{1}{R_{\infty}}\right)\cos(\Phi_{\infty})
\end{bmatrix}
\]
\[=\begin{bmatrix}
    d_{11} & d_{12} & d_{13} \\
    d_{21} & d_{22} & d_{23} \\
    d_{31} & d_{32} & d_{33}
\end{bmatrix}
\]
The characteristic polynomial is then given by
\begin{align}
    \chi(\l)=-\l^3+\mathrm{Tr}(DF)\l^2-\frac{1}{2}(\mathrm{Tr}^2(DF)-\mathrm{Tr}(DF^2))\l+\det(DF).
\end{align}
First, let us compute $\det(DF)$. Since $d_{13}=d_{31}=0$ the computation simplifies to
\begin{align}
    \det(DF)=d_{11}(d_{22}d_{33}-d_{32}d_{23})-d_{12}d_{21}d_{33}
\end{align}
Note that $-d_{11}d_{32}d_{23}<0$ and let us focus on
\begin{align}
    d_{33}(d_{11}d_{22}-d_{12}d_{21})&=\frac{-\k l_{\infty}R_{\infty}^2}{R_{\infty}^2-1}(R_{\infty}+\frac{1}{R_{\infty}})\cos(\Phi_{\infty})\left(\frac{\k}{2}(R_{\infty}+\frac{1}{R_{\infty}})\cos(\Phi_{\infty})+2\a_1-2l_{\infty}-\k\right)\\
    &-\frac{l_{\infty}\k^2}{2(R_{\infty}^2-1)}\left(R_{\infty}+\frac{1}{R_{\infty}}\right)^2\cos^2(\Phi_{\infty})
\end{align}
As the second term is strictly negative we focus on the first product. Further since $\frac{-\k l_{\infty}R_{\infty}^2}{R_{\infty}^2-1}(R_{\infty}+\frac{1}{R_{\infty}})\cos(\Phi_{\infty})<0$ the second half of the product will determine the sign of $d_{33}(d_{11}d_{22}-d_{12}d_{21})$. Plugging in for $l_{\infty}$ and $\cos(\Phi_{\infty})$ at the fixed point using \eqref{eq:ltoR} and \eqref{eq:gh}.
\begin{align}
    &\left(\frac{\k}{2}(R_{\infty}+\frac{1}{R_{\infty}})\cos(\Phi_{\infty})+2\a_1-2l_{\infty}-\k\right)\\
    & \ \ \ \ \ \ =2\a_1-\k+\frac{(\k-2\a_2)R_{\infty}^2+2\a_1-\k}{2(R_{\infty}^2-1)}-\frac{(2\a_1-\k)R_{\infty}^2+\k-2\a_2}{R_{\infty}^2+1},\\
    & \ \ \ \ \ \ =\frac{1}{2(R_{\infty}^4-1)}\left((2\a_1-\k)(3R_{\infty}^2-1)+(\k-2\a_2)(R_{\infty}^4-R_{\infty}^2+2)\right).
\end{align}
Grouping appropriately to utilize \eqref{in:xbound1} and \eqref{in:xbound2} we have
\begin{align}
    =\frac{1}{2(R_{\infty}^4-1)}\left(\left((2\a_1-\k)+(\k-2\a_2)R_{\infty}^2\right)(R_{\infty}^2-1)+2\left((2\a_1-\k)R_{\infty}^2+(\k-2\a_2)\right)\right)>0.
\end{align}
Therefore we can conclude that $d_{33}(d_{11}d_{22}-d_{12}d_{21})<0$ and $\det(DF)<0$.\\

Moving on to compute the linear coefficient
\begin{align}
    -\frac{1}{2}(\mathrm{Tr}^2(DF)-\mathrm{Tr}(DF^2))=-d_{11}d_{22}-d_{11}d_{33}-d_{22}d_{33}+d_{23}d_{32}+d_{12}d_{21}.
\end{align}
As we just showed that $d_{33}(d_{11}d_{22}-d_{12}d_{21})<0$ and $d_{33}<0$ we know that $-d_{11}d_{22}+d_{12}d_{21}<0$ and further each of $-d_{11}d_{33}<0, -d_{22}d_{33}<0$ and $d_{23}d_{32}<0$ so that the linear coefficient $-\frac{1}{2}(\mathrm{Tr}^2(DF)-\mathrm{Tr}(DF^2))<0$.\\

Last the trace is negative as each $d_{ii}<0$. Thus the characteristic polynomial is given by
\begin{align}\label{eq:charac}
    \chi(\l)=a_0\l^3+a_1\l^2+a_2\l+a_3, \ \ a_i<0, \  i=0,...,3.
\end{align}
Therefore by the Routh–Hurwitz criterion every eigenvalue is found strictly in the negative half-plane and the fixed point is stable.
\end{proof}

As can be seen in the passage from Figure \ref{fig:Kur2-a2>a1>0} to Figure \ref{fig:Kur2a1=-a2>0} to Figure \ref{fig:Kur2a1>-a2>0} a phase transition in the curve $f(\a_1,\a_2,\k,\g)=0$ occurs when the $\a$ parameters go from subcritical on average to supercritical on average. The next subsection investigates what occurs during this transition.

\subsection{Interlude 2: The subcritical on average to supercritical on average transition}\label{ss:I2}
When $\a_1+\a_2<0$ it is clear that the curve $f(\a_1,\a_2,\k,\g)=0$ can be viewed as a function $\k(\g)$ for $\g \in \R_+$. When $\a_1+\a_2=0$ the domain of the function discontinuously jumps to $\g>2\a_1$, while for $\a_1+\a_2>0$, the curve $f(\a_1,\a_2,\k,\g)=0$ fails to be a function $\k(\g)$ as it becomes multivalued.
It can further be observed in Figures \ref{fig:Kur2a1>-a2>0} and \ref{fig:Kur2a1>a2>0} that there exists a minimal value $\g'$ such that for $\g<\g'$ the Active State is guaranteed, while a phase transition occurs at the value $\g'$ so that Amplitude death becomes possible on the other side of the curve $f(\a_1,\a_2,\k,\g)=0$. We can explicitly compute this value $\g'$ by minimizing over this curve. Indeed, in the regime where $\a_1+\a_2>0$ are fixed, we see that the curve $f(\a_1,\a_2,\k,\g)=0$ can instead be seen as a function $\g(\k)$ within the domain $\k>2\a_1$. Solving \eqref{curve1} for $\g^2$ gives
\begin{align}\label{gammacurve}
    \g^2(\k)=\frac{-2(\k-(\a_1+\a_2))^2(2\a_1\a_2-\k(\a_1+\a_2))}{\k^2-2(\a_1+\a_2)\k+4\a_1\a_2}.
\end{align}
Minimizing with respect to $\k$ yields solving the following quartic equation to obtain the critical points of $\g(\k)$.
\begin{align}
    (\k-(\a_1+\a_2))\left(\k^3-3(\a_1+\a_2)\k^2+12\a_1\a_2-16\frac{\a_1^2\a_2^2}{\a_1+\a_2}\right)=0
\end{align}
As $\a_1>|\a_2|$ the first root at $\k=\a_1+\a_2$ is outside the domain $\k>2\a_1$. Thus we seek solutions to the cubic part $g(\k)=\k^3-3(\a_1+\a_2)\k^2+12\a_1\a_2-16\frac{\a_1^2\a_2^2}{\a_1+\a_2}$. Differentiating the cubic part yields the quadratic function
\begin{align}
    g'(\k)=3\k^2-6(\a_1+\a_2)\k+12\a_1\a_2=3(\k-2\a_2)(\k-2\a_1),
\end{align}
and hence the cubic $g(\k)$ is strictly increasing on the domain $\k>2\a_1$. Further, $g(2\a_1)=-4\a_1^2(\a_1-\a_2)^2<0$ and hence the cubic has exactly one real root for $\k>2\a_1$. Indeed, one can check that the discriminant of the cubic is negative, $\Delta<0$, and thus the only real root is given by Cardano's formula.
\begin{align}
    \k=\a_1+\a_2+\sqrt[3]{-\frac{q}{2}+\sqrt{\frac{q^2}{4}+\frac{p^3}{27}}}+\sqrt[3]{-\frac{q}{2}-\sqrt{\frac{q^2}{4}+\frac{p^3}{27}}}
\end{align}
for $p=12\a_1\a_2-3(\a_1+\a_2)^2$ and $q=-2(\a_1+\a_2)^3+12\a_1\a_2(\a_1+\a_2)-\frac{16\a_1^2\a_2^2}{\a_1+\a_2}$. Plugging this into \eqref{gammacurve} yields the critical value $\g'$.

This analysis provides that when oscillators are supercritical on average, the Amplitude Death state can only arise if there is both sufficient $\o$-heterogeneity and sufficient coupling strength $\k>2\a_1$.\\

 Of particular importance is that due to the amplitude dependence of the system, whenever $\a_2\leq 0$, for any $\k>0$, the oscillators are able to reach a phase-locked state regardless of the natural frequencies of the oscillators. As $\a_2\leq 0$, the subcritical oscillator is able to have small amplitudes in order to accommodate a potentially weak coupling within the phase equation. Within certain regimes this leads to the supercritical oscillator remaining relatively unaffected by the subcritical oscillator while forcing the subcritical oscillator to adopt the rotation speed of the supercritical one, thus attaining an active phase-locked state despite weak coupling and strong heterogeneity of parameters. This so-called \textit{leader-driven synchronization} can be more clearly illustrated in the following subsections when the smaller oscillator is also supercritical. This completes the analysis of coupling one subcritical oscillator with a supercritical oscillator.

\subsection{Interlude 3: Obtaining the Phase-Locking/Incoherence Curve}\label{ss:i3}

Increasing $\a_2$ until we have $\a_1>\a_2>0$ provides the recovery of incoherent dynamics as can be seen in Figure \ref{fig:Kur2a1>a2>0}. Within this figure we still see the same curve $f(\a_1,\a_2,\k,\g)=0$ which provides the transition from Active to Amplitude Death. To obtain the equation for the transition between the Phase-Locking and Incoherent states we note that the curve arises from a coincidence of events.

In the previous subsections when $\a_2\leq 0$, the heterogeneity parameter $a=\a_1-\a_2\geq \a_1$ and hence the automatic bound of $r_j^2 \leq \a_1$ implies that $l<  \a_1\leq a$. However now that $\a_2>0$, this is no longer guaranteed as $a=\a_1-\a_2<\a_1$. Therefore the bound $l< \a_1$ does not guarantee that $a-l>0$. Let us observe what occurs if $l_{\infty}=a$. We plug this into equation \eqref{eq:ltoR} to get
\begin{align}
    a=\frac{(2\a_1-\k)R_{\infty}^2+(\k-2\a_2)}{2(R_{\infty}^2+1)} \implies R_{\infty}=\sqrt{\frac{2\a_1-\k}{2\a_2-\k}}.
\end{align}
From this we return to the fixed point equation for $R_{\infty}$, \eqref{eq:Rhetstat} to get
\begin{align}
    0=\frac{\k}{2}\cos(\Phi_{\infty})(1-R_{\infty}^2).
\end{align}
As $R_{\infty}\neq 1$, we see that for a fixed point to exist when $l_{\infty}=a$, we must have $\cos(\Phi_{\infty})=0$. Utilizing this in \eqref{eq:Phihetstat} we get the equation
\begin{align}
    0=\g-\frac{\k}{2}(R_{\infty}+\frac{1}{R_{\infty}}) \implies \g^*(\k)=\k\left(\frac{\a_1+\a_2-\k}{\sqrt{(2\a_1-\k)(2\a_2-\k)}}\right), \ \ \k \in (0,2\a_2).
\end{align}

Along this curve, the stability analysis provided in the previous section still holds so that the Active Phase-Locked State $l_{\infty}=a, R_{\infty}=\sqrt{\frac{2\a_1-\k}{2\a_2-\k}},\Phi_{\infty}=\frac{\pi}{2}$ is stable along the curve $\g^*(\k)$. Indeed the Jacobian is given by

\[
DF(l_{\infty},R_{\infty},\Phi_{\infty})=\begin{bmatrix}
-\frac{2a}{R_{\infty}^2-1}\left(R_{\infty}^2+1\right) & \frac{2aR_{\infty}}{R_{\infty}^2-1}(2\a_1-2a-\k) & 0 \\
-R_{\infty} & 0 & -\frac{\k}{2}(1-R_{\infty}^2) \\
0 & -\frac{\k}{2}(1-\frac{1}{R_{\infty}^2}) & 0 \\
\end{bmatrix}
\]
The characteristic polynomial is given by
\begin{align}
    \chi(\l)=-\l^3+d_{11}\l^2+(d_{23}d_{32}+d_{12}{d_{21}})\l-d_{11}d_{23}{d_{32}}
\end{align}
Again, by the Routh-Hurwitz criterion we can conclude the stability of this fixed point since $d_{11}<0$, $d_{23}>0$, $d_{32}<0$, $d_{21}<0$ and
\begin{align}
    d_{12}=\frac{2aR_{\infty}}{R_{\infty}^2-1}(2\a_2-\k)>0, \ \ \text{for} \ \k\in (0,2\a_2),
\end{align}
which is the whole domain for $\g^*(\k)$. Hence the transition curve retains linear stability in contrast to both the $\a$-homogeneous case and the $N=2$ Kuramoto model.

\subsection{Supercritical Oscillators}
We reach the final heterogeneous configuration for $N=2$ oscillators where both oscillators are supercritical so that $\a_1>\a_2>0$. The asymptotic states are depicted in Figure \ref{fig:Kur2a1>a2>0}, and the following theorem provides stability for each of the asymptotic states.

\begin{theorem}\label{t:a1>a2>0}
    Let $\a_1>\a_2>0$. If $f(\a_1,\a_2,\k,\g)>0$, then the Amplitude Death Phase-Locked state given in Theorem \ref{t:ADstab} is stable. If $f(\a_1,\a_2,\k,\g)<0$, then Amplitude Death is unstable and if further either $\k\geq 2\a_2$ or if $\k<2\a_2$ and $\g\leq\g^*(\k)$, there exists a unique stable phase-locked state in the domain
    \begin{align}
    &R_{\infty} \in (R_{\infty}^-,R_{\infty}^+),\\
    &l_{\infty} \in (0,a],\\
    &\Phi_{\infty} \in (0,\frac{\pi}{2}].
\end{align}
If $\k<2\a_2$ and $\g>\g^*(\k)$, then the system remains Active, however no stable phase-locked state exists.
\end{theorem}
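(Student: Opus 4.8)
The plan is to follow the architecture of the proof of Theorem~\ref{t:mixact}, treating the supercritical case $\a_1>\a_2>0$ as a modification of the mixed case in which the single genuinely new feature is that $a=\a_1-\a_2<\a_1$, so that the constraint $l_\infty\le a$ can now become \emph{active}. I would first dispose of the Amplitude Death alternative: when $f(\a_1,\a_2,\k,\g)>0$, Theorem~\ref{t:ADstab} already gives stability of the AD phase-locked state $Y_\infty^+$, while the remark following it gives that AD is unstable whenever $f(\a_1,\a_2,\k,\g)<0$. So assume $f<0$ and search for Active Phase-Locked states through the closed three-variable reduction \eqref{eq:Rhet}--\eqref{eq:lhet}. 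Exactly as in Theorem~\ref{t:mixact}, solving the stationary $l$-equation on the hyperbola gives \eqref{eq:ltoR}, the stationary $\Phi$-equation gives $\cos\Phi_\infty=g(R_\infty)$ as in \eqref{eq:PhitoR}, and substitution into the stationary $R$-equation reduces everything to the scalar root problem $g(R_\infty)=h(R_\infty)$ with $h$ as in \eqref{eq:gh}.

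The first real step is to re-derive the admissible interval $(R_\infty^-,R_\infty^+)$. The lower endpoint $R_\infty^-$ (needed so the square root in $g$ is real) is unchanged. The upper endpoint changes with the regime: for $\k\ge 2\a_1$ it is still $\sqrt{(\k-2\a_2)/(\k-2\a_1)}$ coming from $l_\infty\ge0$; for $2\a_2\le\k<2\a_1$ one checks directly from \eqref{eq:ltoR} that $0<l_\infty<a$ for all $R_\infty>1$, so $R_\infty^+=\infty$; and for $\k<2\a_2$ the binding constraint is $l_\infty\le a$, which by \eqref{eq:ltoR} gives $R_\infty^+=\sqrt{(2\a_1-\k)/(2\a_2-\k)}$. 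The key technical point to re-examine is the monotonicity of $h$ from Lemma~\ref{l:gh}, since in the regime $\k<2\a_2$ the coefficient $\k-2\a_2$ changes sign. I would verify that on the new interval the inequality \eqref{in:xbound1} holds because $R_\infty>1>1/R_\infty^+$, while \eqref{in:xbound2} holds precisely because $R_\infty<R_\infty^+$; the same grouping of $h'(x)$ used in Lemma~\ref{l:gh} then still yields $h'<0$, so $g$ increasing and $h$ decreasing persist verbatim.

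The crux is pinning the phase-locking/incoherence threshold exactly at the curve $\g^*(\k)$ of Subsection~\ref{ss:i3}. Evaluating $h$ at the new endpoint gives the clean identity $h(R_\infty^+)=0$ (its numerator vanishes there since $R_\infty^+$ solves $(2\a_1-\k)=R^2(2\a_2-\k)$), while $g(R_\infty^+)=0$ if and only if $\k(R_\infty^++1/R_\infty^+)/2=\g$, which is exactly $\g=\g^*(\k)$. This yields the existence/uniqueness dichotomy: for $\k\ge2\a_2$ (with $f<0$) the right-endpoint comparison gives $g>h$ — either $g(\infty)=1>0=h(\infty)$, or the bound \eqref{in:gbound} from $f<0$ as in Lemma~\ref{l:gh} — while $g<h$ at the left endpoint, so $g-h$ increasing forces a unique interior root and hence a unique Active Phase-Locked state. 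For $\k<2\a_2$ the comparison at $R_\infty^+$ is governed by $\g^*(\k)$: if $\g<\g^*(\k)$ then $g(R_\infty^+)>0=h(R_\infty^+)$ and one again gets a unique interior root; if $\g=\g^*(\k)$ the root sits at the boundary $l_\infty=a,\ \Phi_\infty=\frac{\pi}{2}$ (explaining the half-open intervals in the statement); and if $\g>\g^*(\k)$ the real domain of $g$ terminates before $R_\infty^+$ with $g=0<h$ there, so the increasing function $g-h$ stays negative and no root exists.

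Stability and the incoherent regime close the argument. For the interior root the Jacobian computation and Routh--Hurwitz sign analysis of Theorem~\ref{t:mixact} carry over unchanged, since they use only $\cos\Phi_\infty>0$ and \eqref{in:xbound1}--\eqref{in:xbound2}, all just re-verified; on the boundary $\g=\g^*(\k)$ I would instead quote the explicit stable Jacobian computed in Subsection~\ref{ss:i3}. Finally, when $\k<2\a_2$ and $\g>\g^*(\k)$, the only candidate fixed points live on the branch $\Phi_\infty\in(\frac{\pi}{2},\pi)$ with $\cos\Phi_\infty<0$, which is linearly unstable in the $\Phi$-direction since the diagonal entry $-\frac{\k}{2}(R_\infty+\frac{1}{R_\infty})\cos\Phi_\infty>0$; hence no stable phase-locked state exists. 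Because $f<0$ makes Amplitude Death unstable, the amplitudes persist and $\Phi$ keeps rotating, giving an active incoherent state. I expect the main obstacle to be exactly this last regime: making ``remains Active'' rigorous (rather than merely ``AD is unstable'') requires a time-averaging argument on the amplitude equation analogous to Cases~1a/2a of Theorem~\ref{t:N=2,a=0}, showing the time-averaged Hopf parameter stays positive along the rotating solution.
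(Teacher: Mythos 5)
Your proposal is correct and follows essentially the same route as the paper's own proof: the same reduction to the scalar root problem $g(R_\infty)=h(R_\infty)$ on $(R_\infty^-,R_\infty^+)$ with the new upper endpoint $\sqrt{(2\a_1-\k)/(2\a_2-\k)}$ when $\k<2\a_2$, the same identification of the threshold $\g^*(\k)$ through $h(R_\infty^+)=0$ and $g(R_\infty^+)=0 \iff \g=\g^*(\k)$, and the same Routh--Hurwitz stability transfer from Theorem~\ref{t:mixact} (the paper carries out exactly these steps in Lemma~\ref{l:gh2}, Subsection~\ref{ss:i3} and Lemma~\ref{l:incoherence}). Two minor remarks: you actually supply a detail the paper glosses over (re-verifying $h'<0$ when $\k-2\a_2<0$ via \eqref{in:xbound1}--\eqref{in:xbound2} on the restricted interval), while your instability argument for the $\cos\Phi_\infty<0$ branch via the positive diagonal entry $-\frac{\k}{2}\left(R_\infty+\frac{1}{R_\infty}\right)\cos\Phi_\infty$ is the same quantity, and the same (somewhat informal) level of rigor, as the paper's scalar-parabola argument in Lemma~\ref{l:incoherence}; your closing caveat that ``remains Active'' needs more than instability of Amplitude Death is a fair point that the paper itself does not address.
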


Much of the heavy lifting for this theorem has been proved in the previous Theorem \ref{t:ADstab} subsections \ref{ss:i1}, \ref{ss:mixed}, \ref{ss:i3}. Indeed, Theorem \ref{t:ADstab} covers the case of $f(\a_1,\a_2,\k,\g)>0$. Further, the analysis completed in \ref{ss:i1} and \ref{ss:i3} shows that the two curves given by $f(\a_1,\a_2,\k,\g)=0$ and $\g^*(\k)$ never intersect. Further, notice that if $\k\geq 2\a_2$, then for the characterization of $l_{\infty}$ obtained earlier,
\begin{align}
    l_{\infty}=\frac{(2\a_1-\k)R_{\infty}^2+(\k-2\a_2)}{2(R_{\infty}^2+1)},\label{eq:ltoR2}
\end{align}
the condition $R_{\infty}>1$ automatically guarantees that $l_{\infty} \in (0,a)$. Hence the solution to the stationary equation for $R_{\infty}$,
\begin{align}
     0&=(a-l_{\infty})R_{\infty}+\frac{\k}{2}\cos(\Phi_{\infty})(1-R_{\infty}^2),\label{eq:Rhetstat2}
\end{align}
implies that a solution satisfies $\cos(\Phi_{\infty}) \in (0,\frac{\pi}{2})$. Therefore the definitions of $R_{\infty}^-$ and $R_{\infty}^+$ are identical and the argument for existence, uniqueness and stability of the Active Phase-Locked state is identical to that of Theorem \ref{t:mixact}.\\

Now assume $\k<2\a_2$, and $\g<\g^*(\k)$. Then from \eqref{eq:ltoR2} we again seek the bounds $l_{\infty} \in (0,a)$.

\begin{align}
    0< \frac{(2\a_1-\k)R_{\infty}^2+(\k-2\a_2)}{2(R_{\infty}^2+1)}<a \implies \sqrt{\frac{2\a_2-\k}{2\a_1-\k}}<R_{\infty}<\sqrt{\frac{2\a_1-\k}{2\a_2-\k}}
\end{align}
As $\k<2\a_2<2\a_1$ we see that for $R_{\infty}>1$ the first bound automatically holds, and the second inequality provides us with a new $R_{\infty}^+$ within this regime.

We proceed to extend the proof of existence and uniqueness within this region via the same functions $g(x)$ and $h(x)$ as previously defined.

\begin{lemma}\label{l:gh2}
    If $g(x):=\sqrt{1-\left(\frac{2\g}{\k(x+\frac{1}{x})}\right)^2}$, and $h(x):=\frac{x^3(\k-2\a_2)+x(2\a_1-\k)}{\k(x^4-1)}$. Then letting
    \[
    x^-=\begin{cases}
        \frac{\g}{\k}+\frac{1}{\k}\sqrt{\g^2-\k^2} & \text{if} \ \k < \g, \\
        1 & \text{if} \ \k\geq \g,
    \end{cases}
    \]
    and
     \[
    x^+=\begin{cases}
        \sqrt{\frac{\k-2\a_2}{\k-2\a_1}}  & \text{if} \ \k > 2\a_1,\\
        \infty & \text{if} \ 2\a_2\leq \k\leq 2\a_1, \\
           \sqrt{\frac{2\a_1-\k}{2\a_2-\k}}  & \text{if} \ \k < 2\a_2.
    \end{cases}
    \]
    then the following holds
    \begin{align}
        &\lim_{x\to (x^-)^+}g(x)=c_1< c_2=\lim_{x\to (x^-)^+}h(x), \label{leftlim2}\\
        &\lim_{x \to (x^+)^-} g(x)=c_3>c_4=\lim_{x \to (x^+)^-} h(x). \label{rightlim2}
    \end{align}
    further $g'(x)>0$ and $h'(x)<0$ on the domain $(x^-,x^+)$.
\end{lemma}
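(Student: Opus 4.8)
The plan is to follow the architecture of the proof of \lem{l:gh} wherever the two statements coincide, concentrating the real work on the one genuinely new regime $\k<2\a_2$, where $x^+=\sqrt{\frac{2\a_1-\k}{2\a_2-\k}}$. Since $g$ and $x^-$ are defined exactly as in \lem{l:gh}, the monotonicity $g'(x)>0$ (inherited from the strict increase of $x+\frac1x$ on $x>1$) transfers verbatim, as does the entire left-limit statement \eqref{leftlim2}: when $\k\geq\g$ one has $x^-=1$, $g(1)=\sqrt{1-(\g/\k)^2}$, and $h(x)\to+\infty$ as $x\to1^+$ because the numerator of $h$ at $x=1$ equals $2a>0$; when $\k<\g$ one has $g(x^-)=0$ by construction and $h(x^-)>0$ since the bracket $(2\a_1-\k)+(x^-)^2(\k-2\a_2)$ is positive on the nonempty domain $x^-<x^+$. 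I would therefore reduce the lemma to two claims valid in the regime $\k<2\a_2$: $h'(x)<0$ on $(x^-,x^+)$, and the right-limit inequality \eqref{rightlim2}.

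For the right limit, the crucial point is that the new cutoff $x^+$ is chosen precisely so that the numerator of $h$ vanishes there: the defining relation $(x^+)^2(2\a_2-\k)=2\a_1-\k$ rearranges to $(x^+)^2(\k-2\a_2)+(2\a_1-\k)=0$, whence $h(x^+)=0=:c_4$. It then remains only to see $g(x^+)>0$, and this is where the standing hypothesis $\g<\g^*(\k)$ enters. I would compute
\begin{align}
    x^++\frac{1}{x^+}=\frac{(2\a_1-\k)+(2\a_2-\k)}{\sqrt{(2\a_1-\k)(2\a_2-\k)}}=\frac{2(\a_1+\a_2-\k)}{\sqrt{(2\a_1-\k)(2\a_2-\k)}},
\end{align}
so that $\frac{\k}{2}\left(x^++\frac1{x^+}\right)=\g^*(\k)$ exactly; consequently $g(x^+)^2=1-\left(\g/\g^*(\k)\right)^2>0$ precisely because $\g<\g^*(\k)$, giving $c_3=g(x^+)>0=c_4$ and establishing \eqref{rightlim2}.

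For the sign of $h'$, I would reuse the grouped expression derived in \lem{l:gh},
\begin{align}
    h'(x)=\frac{1}{\k(x^4-1)^2}\Big(\big((2\a_1-\k)+x^2(\k-2\a_2)\big)(-x^4-1)+\big(x^2(2\a_1-\k)+(\k-2\a_2)\big)(-2x^2)\Big),
\end{align}
and check that both bracketed factors stay positive on $(x^-,x^+)$. The first factor is positive exactly when $x^2<\frac{2\a_1-\k}{2\a_2-\k}=(x^+)^2$, which is the content of the new upper cutoff $x<x^+$; the second is positive because for $x>1$ one has $x^2(2\a_1-\k)>2\a_1-\k>2\a_2-\k>0$, using $\a_1>\a_2$ and $\k<2\a_2$. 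Each positive factor is multiplied by a strictly negative quantity and the prefactor is positive, so $h'(x)<0$. Combined with $g'>0$ and the two limit inequalities, this closes the lemma.

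The main obstacle is not computational but conceptual: recognizing that the new cutoff $x^+$ plays a double role — it simultaneously annihilates the numerator of $h$ and realizes the identity $\frac{\k}{2}(x^++\frac1{x^+})=\g^*(\k)$ — which is exactly what converts the hypothesis $\g<\g^*(\k)$ into the strict inequality $g(x^+)>0$ needed for \eqref{rightlim2}. Everything else is sign bookkeeping imported from \lem{l:gh}.
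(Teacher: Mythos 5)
Your proposal is correct and follows essentially the same route as the paper: reduce to the new regime $\k<2\a_2$, note $h(x^+)=0$ by construction of the new cutoff, use $\g<\g^*(\k)$ to get $g(x^+)>0$, and regroup $h'$ as in Lemma~\ref{l:gh} to get $h'<0$. Your packaging of the right limit via the identity $\tfrac{\k}{2}\bigl(x^++\tfrac{1}{x^+}\bigr)=\g^*(\k)$, so that $g(x^+)^2=1-(\g/\g^*(\k))^2$, is the same computation as the paper's \eqref{gRlim} but cleaner; and you supply the sign check of the two brackets in $h'$ for $\k<2\a_2$ explicitly, which the paper dispatches with a remark that the derivative behavior is "already done." The one soft spot is the left limit when $\k<\g$: you justify $h(x^-)>0$ by asserting that the bracket $(2\a_1-\k)+(x^-)^2(\k-2\a_2)$ is positive "on the nonempty domain $x^-<x^+$," but nonemptiness of the domain is precisely what must be checked here (the paper does it by explicit algebra from $\g<\g^*(\k)$). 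The fix is one line from your own identities: since $\tfrac{\k}{2}\bigl(x^-+\tfrac{1}{x^-}\bigr)=\g$, $\tfrac{\k}{2}\bigl(x^++\tfrac{1}{x^+}\bigr)=\g^*(\k)$, and $x\mapsto x+\tfrac1x$ is strictly increasing on $x>1$, the hypothesis $\g<\g^*(\k)$ gives $x^-<x^+$ directly, and positivity of the bracket at $x^-$ is equivalent to $x^-<x^+$. With that sentence added, the argument is complete.
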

\begin{proof}
    The derivative behavior as well as the limits for $\k\geq 2\a_2$ are already done. Therefore we begin with the left limits for $\k<2\a_2$. In this case we have to check the two cases depending on if $\k\geq \g$ or $\k<\g$. First, the limits for $\k\geq \g$ are unchanged so that
    \begin{align}
    &\lim_{x \to 1^+} g(x)=c_1=\sqrt{1-\left(\frac{\g}{\k}\right)^2} \in [0,1),\\
    &\lim_{x \to 1^+} h(x)=\lim_{x \to 1^+}\frac{x^3(\k-2\a_2)+x(2\a_1-\k)}{\k(x^4-1)}=\lim_{x \to 1^+} \frac{2a}{\k(x^4-1)}=+\infty>c_1.
\end{align}
For $\k<\g$, we still have $\lim_{x \to (x^-)^+} g(x)=0$. However, now that $\k-2\a_2<0$ the limit for $h(x)$ is more complicated. We still have $\lim_{x \to (x^-)^+} h(x)=h(x^-)$,
\begin{align}
    h(x^-)=\frac{\left(\frac{\g}{\k}+\frac{1}{\k}\sqrt{\g^2-\k^2}\right)}{\k\left(\left(\frac{\g}{\k}+\frac{1}{\k}\sqrt{\g^2-\k^2}\right)^4-1\right)}\left(\left(\frac{\g}{\k}+\frac{1}{\k}\sqrt{\g^2-\k^2}\right)^2(\k-2\a_2)+2\a_1-\k\right).
\end{align}
As $\frac{\g}{\k}+\frac{1}{\k}\sqrt{\g^2-\k^2}>1$, we focus on showing the second half of the product is positive to get $h(x^-)>0$. Now, we use the fact that $\g<\g^*(\k)=\k\left(\frac{\a_1+\a_2-\k}{\sqrt{(2\a_1-\k)(2\a_2-\k)}}\right)$ to get
\begin{align}
    &\left(\left(\frac{\g}{\k}+\frac{1}{\k}\sqrt{\g^2-\k^2}\right)^2(\k-2\a_2)+2\a_1-\k\right)>\\
    &\left(2\frac{(\a_1+\a_2-\k)^2}{(2\a_1-\k)(2\a_2-\k)}+2\frac{\a_1+\a_2-\k}{\sqrt{(2\a_1-\k)(2\a_2-\k)}}\sqrt{\frac{(\a_1+\a_2-\k)^2}{(2\a_1-\k)(2\a_2-\k)}-1}-1\right)(\k-2\a_2)+\\
    & \ \ \ \ +2\a_1-\k=0,\nonumber
\end{align}
where one can verify the final equality via a decent amount of algebra. Thus $\lim_{x \to (x^-)^+} h(x)=h(x^-)=c_2>c_1$.\\

We continue with the right limits. Again we evaluate each of $h$ and $g$ at the end point $x^+$ to get
\begin{align}
    &\lim_{x\to (x^+)^-} h(x)=h(x^+)=0, \label{hRlim}\\
    &\lim_{x \to (x^+)^-} g(x)=g(x^+)=\sqrt{1-\frac{\g^2(2\a_1-\k)(2\a_2-\k)}{\k^2(\a_1+\a_2-\k)^2}}>0.\label{gRlim}
\end{align}
Equation \eqref{hRlim} is attained by construction of $x^+$ to guarantee that $h(x)>0$ in the regime $\k<2\a_2$. Meanwhile \eqref{gRlim} is automatic as $g'(x)>0$ and $g(x^-)\geq0$, however one can also check directly using the fact that $\g<\g^*(\k)$.
\end{proof}

Lemma \ref{l:gh2} provides existence and uniqueness of an Active Phase-Locked state in the domain and regimes defined in Theorem \ref{t:a1>a2>0}. The linear stability of this state can be deduced from the same argument as for Theorem \ref{t:mixact}. This can be seen from the fact that in $\k \geq 2\a_2$, the argument does not change at all, while for $\k<2\a_2$ and $\g<\g^*(\k)$ the restriction of the domain of $R_{\infty}$ given in Lemma \ref{l:gh2} yields the same bounds \eqref{in:xbound1}-\eqref{in:xbound2} that were necessary to guarantee the negativity of all the coefficients of the characteristic polynomial \eqref{eq:charac} so that the Routh-Hurwitz criterion gives eigenvalues only in the left half-plane. Further, in the previous Subsection \ref{ss:i3} we saw that when $\g=\g^*(\k)$ the stability is preserved.\\

Finally, when $\k<2\a_2$ and $\g>\g^*(\k)$ we see that the system remains active since $f(\a_1,\a_2,\k,\g)<0$. Let us continue to prove that any phase-locked state within this regime must be unstable.

\begin{lemma}\label{l:incoherence}
    Let $\a_1>\a_2>0$ and suppose that $\k<2\a_2$ and $\g>\g^*(\k)$. Then any triple $(l_{\infty},R_{\infty},\Phi_{\infty})$ which represents a fixed point of the equations \eqref{eq:Rhetstat}, \eqref{eq:Phihetstat}, \eqref{eq:lhetstat} is not stable.
\end{lemma}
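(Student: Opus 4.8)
The plan is to push through the same three‑variable reduction $(l,R,\Phi)$ and the Jacobian $DF$ used in the proof of Theorem~\ref{t:mixact}, and to show that once $\k<2\a_2$ and $\g>\g^*(\k)$, any fixed point is forced onto the ``wrong'' branch of the phase balance, where the Routh--Hurwitz test fails. First I would locate the fixed point. Combining the stationary relations \eqref{eq:Rhetstat}--\eqref{eq:lhetstat} exactly as in Lemma~\ref{l:gh2}, the identity \eqref{eq:ltoR2} makes $l_\infty$ a strictly increasing function of $R_\infty$ with $l_\infty=a$ precisely at $R_\infty=x^+=\sqrt{(2\a_1-\k)/(2\a_2-\k)}$, while the $R$‑balance \eqref{eq:Rhetstat} gives $\cos\Phi_\infty=\tfrac{2(a-l_\infty)R_\infty}{\k(R_\infty^2-1)}$, so $\cos\Phi_\infty$ and $a-l_\infty$ share a sign. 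Since the construction in Lemma~\ref{l:gh2} produces a root in $(x^-,x^+)$ (hence $l_\infty<a$, $\Phi_\infty\in(0,\tfrac\pi2)$, $\cos\Phi_\infty>0$) only while $\g\le\g^*(\k)$, for $\g>\g^*(\k)$ any admissible fixed point must be the complementary root with $R_\infty>x^+$, i.e. $l_\infty>a$ and $\cos\Phi_\infty<0$, so $\Phi_\infty\in(\tfrac\pi2,\pi)$.

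The decisive structural consequence is the sign of the diagonal of the phase--amplitude block: $d_{22}=d_{33}=-\tfrac{\k}{2}\big(R_\infty+\tfrac1{R_\infty}\big)\cos\Phi_\infty>0$. Hence the $(R,\Phi)$ block, for which $d_{23}d_{32}<0$, now carries a complex‑conjugate eigenvalue pair with \emph{positive} real part — an unstable focus once $l$ is frozen. However $d_{11}<0$ still holds (the $l$‑direction is contracting) and the couplings $d_{12},d_{21}$ are nonzero, so I cannot read off instability directly: the contracting $l$‑mode might in principle stabilize the focus. I would therefore pass to the full cubic \eqref{eq:charac} and apply the Routh--Hurwitz criterion. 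Writing $p=-\mathrm{Tr}(DF)$, $q=c_2(DF)$ (the sum of principal $2\times2$ minors) and $r=-\det(DF)$, one first verifies by short sign computations from the entries $d_{ij}$ that $p>0$, $q>0$ and $r>0$ persist throughout the regime; thus the \emph{only} stability condition at stake is the Hurwitz product $pq>r$, and the goal is to prove the reverse inequality $pq<r$, which places a conjugate pair of eigenvalues in the open right half‑plane.

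The main obstacle is exactly this inequality. Substituting the stationarity identities into $pq-r=-\mathrm{Tr}(DF)\,c_2(DF)+\det(DF)$ and using $d_{22}=d_{33}$ together with $d_{13}=d_{31}=0$ (the $\Phi$‑equation does not see $l$, and $F_l$ does not see $\Phi$), the expression collapses to a difference of two manifestly signed pieces,
\[
pq-r=-2\,d_{22}\big((d_{11}+d_{22})^2-d_{23}d_{32}\big)+d_{12}d_{21}\,(d_{11}+d_{22}),
\]
in which the first (``focus'') term is negative and the second (``$l$‑coupling'') term $d_{12}d_{21}(d_{11}+d_{22})>0$ is positive. The heart of the matter is to show the negative term dominates. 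This is where the hypothesis $\g>\g^*(\k)$ must enter quantitatively: one inserts the explicit values $l_\infty(R_\infty)$, $\cos\Phi_\infty$ and $\sin\Phi_\infty$, eliminates $\Phi_\infty$ through $\cos^2\Phi_\infty+\sin^2\Phi_\infty=1$, and uses the defining relation of $\g^*(\k)$ (equivalently the $\g$‑estimate behind \eqref{curve1}) to bound the coupling term.

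I expect this final balance to be the delicate step, for two reasons. The destabilization here is \emph{oscillatory} — a Hopf‑type crossing of a conjugate pair — rather than a sign change of $\det(DF)$ or $\mathrm{Tr}(DF)$, so (in contrast to Theorem~\ref{t:mixact}) the ``obvious'' coefficient signs all continue to look stable and the instability is visible only through the single product condition $pq<r$. Moreover the competition between the contracting $l$‑mode and the unstable focus is genuinely quantitative, so the inequality cannot be settled by sign inspection alone and is precisely the point at which the strict inequality $\g>\g^*(\k)$ has to be exploited with care; verifying it uniformly on the regime is the crux of the argument, and I would allocate the bulk of the work there.
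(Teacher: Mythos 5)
Your localization of the fixed point is exactly the paper's first (and essentially only) step: $\g>\g^*(\k)$ forces $l_\infty>a$, hence $\cos\Phi_\infty<0$ from \eqref{eq:Rhetstat}, and the admissible equilibrium is the positive root of an upward parabola $c_1R_\infty^2-c_2R_\infty-c_1$ with $c_1,c_2>0$. The paper then stops: it observes that this positive root is the \emph{repelling} root of the scalar $R$-dynamics --- equivalently $\partial_{R_\infty}F_R=-\tfrac{\k}{2}\bigl(R_\infty+\tfrac1{R_\infty}\bigr)\cos\Phi_\infty>0$, precisely the sign you record as $d_{22}>0$ --- and concludes non-stability from that alone, remarking afterwards that ``this technique only works to prove that the equilibrium cannot be stable.'' You notice this same sign but discard it as inconclusive and instead commit to a full Routh--Hurwitz analysis of the $3\times3$ Jacobian. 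Your caution is not unreasonable (a single positive diagonal entry does not by itself place an eigenvalue in the right half-plane), but it leads you onto a road the paper never takes and that you do not finish.

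That is the gap. The decisive inequality $pq<r$ is never established --- you explicitly defer it as ``the crux'' and ``the delicate step'' --- so the proposal is a roadmap rather than a proof. Moreover, several sign facts you assert as ``short computations'' are not available in this regime: $p=-\mathrm{Tr}(DF)=-(d_{11}+2d_{22})$ now has $d_{22}>0$, so its claimed positivity is unclear (if it fails you would get instability for free, but then your premise $p,q,r>0$ collapses and the reduction to the single condition $pq<r$ is moot); and $d_{12}=\tfrac{2l_\infty R_\infty}{R_\infty^2-1}(2\a_1-2l_\infty-\k)$ carries the factor $2\a_1-2l_\infty-\k$, which is not sign-definite for $l_\infty\in(a,\a_1)$ with $\k<2\a_2$, so the positivity of the ``$l$-coupling'' term $d_{12}d_{21}(d_{11}+d_{22})$ is unsubstantiated. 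To match the statement as the paper proves it, you should either carry out the quantitative estimate you outline (which would in fact be a stronger result than the paper's) or adopt the paper's shorter route: having shown $\cos\Phi_\infty<0$, conclude directly from the parabola that the only nonnegative equilibrium value of $R_\infty$ is the repelling root, and hence the fixed point cannot be stable.
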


\begin{proof}
First notice that the incoherence curve $\g^*(\k)$ was constructed via finding exactly when $l_{\infty}=a$, therefore the condition $\g>\g^*(\k)$ implies that $a<l_{\infty}<\a_1$ must hold. Therefore from,
\begin{align}
    0=(a-l_{\infty})R_{\infty}+\frac{\k}{2}\cos(\Phi_{\infty})(1-R_{\infty}^2),
\end{align}
we see that $a-l_{\infty}<0$ implies that $\cos(\Phi_{\infty})<0$. Therefore the fixed point equation for $R_{\infty}$ is given by the parabola
\begin{align}
    p(R_{\infty})=c_1R_{\infty}^2-c_2R_{\infty}-c_1, \ \ c_1,c_2>0.
\end{align}
Hence there is a stable negative root and the positive root given by $R_{\infty}=\frac{c_2+\sqrt{c_2^2+4c_1^2}}{2c_1}$ cannot represent a stable equilibrium. Since the variable $R(t)$ represents the ratio of the amplitudes of the oscillators, $R(t)\geq0$ is required, and hence there is no stable equilibrium in this regime.
\end{proof}
An important note about the above lemma is that this technique only works to prove that the equilibrium cannot be stable. Indeed, proving stability of the fixed points required more in depth analysis that took into account all three variables while performing the linearization.\\

This completes the proof of Theorem \ref{t:a1>a2>0}.

\section{Homogeneous natural frequency coupled Stuart-Landau Oscillators}\label{S:hom}
In this section we move beyond the $N=2$ case to provide full synchronization results for coupled Stuart-Landau oscillators with identical natural frequencies. The model for general $N$ is given by
\begin{equation}\label{SLfull}
    \ddt z_j=\left(\a_j+\i\w_j-|z_j|^2\right)z_j+\frac{\k}{N}\sum_{l=1}^N(z_l-z_j),
\end{equation}

Let us first see the conditions necessary for a fixed point to exist. Splitting into  the equations for amplitude and phase,
\begin{align}
    \ddt r_j&=(\a_j-r_j^2)r_j+\frac{\k}{N}\sum_{l=1}^N(\cos(\phi_l-\phi_j)r_l-r_j),\\
    \ddt \phi_j&=\omega_j+\frac{\k}{N}\sum_{l=1}^N\frac{r_l}{r_j}\sin(\phi_l-\phi_j),
\end{align}
let us first set the phase equation to zero, and multiply through by $r_j^2$,
\begin{align}
    0=\omega_jr_j^2+\frac{\k}{N}\sum_{l=1}^Nr_lr_j\sin(\phi_l-\phi_j),
\end{align}
summing over all $j$ yields
\begin{align}
    \bw=\frac{1}{\sum_{j=1}^Nr_{j,\infty}^2}\sum_{j=1}^N\w_jr_{j,\infty}^2=0,
\end{align}
is required in order to have a fixed point. Indeed, if $\bw \neq 0$, this corresponds to a phase locked state rotating at constant speed $\bw$, which depends on the asymptotic amplitude values, and hence is an emergent property. Although a priori it is not known what $r_{j,\infty}$ will be, by the rotational invariance of the system we can still shift to the rotational reference frame so that each $\omega_j \mapsto \omega_j-\bw$. We begin with an analysis of the system with all $\o_j\equiv \o$. With this we see that $\bw=\omega$, which no longer depends on the amplitudes, and that a fixed point only can exist for $\o=0$. This provides the $\o$-homogeneous formulation of the model.

\begin{equation}\label{SLhom}
    \ddt z_j=\left(\a_j-|z_j|^2\right)z_j+\frac{\k}{N}\sum_{l=1}^N(z_l-z_j),
\end{equation}
where $\k>0$ is the coupling strength, and $\a_j\in\R$ determines the desired amplitude of each of the oscillators. In this way all the oscillators want to synchronize to the same phase, while potentially having different amplitudes. A particular version of this setting was treated in \cite{DN1} in the context of large $N$ and $\a_j$ sampled from exactly two values ($a>0$ and $b<0$) where a numerical investigation lead to either full active synchronization or amplitude death (termed aging). In what follows we make no restriction on the finite size of $N$ and let $\a_j\in \R$ take any values, providing an analytic proof of convergence to phase synchronized states as well as an explicit phase transition from the Active state to the Amplitude Death state.

We can again split \eqref{SLhom} into equations for the amplitudes and phases,
\begin{align}
    \ddt r_j&=(\a_j-r_j^2)r_j+\frac{\k}{N}\sum_{l=1}^N\cos(\phi_l-\phi_j)r_l-r_j,\label{eq:shom1}\\
    \ddt \phi_j&=\frac{\k}{N}\sum_{l=1}^N\frac{r_l}{r_j}\sin(\phi_l-\phi_j).\label{eq:shom2}
\end{align}
Notice the similarity of the phase equation to the classical homogeneous Kuramoto model, however the amplitudes now play a role in the synchronization process. Indeed, the ratios of amplitude play the role of an adaptive complete graph network where oscillators with greater amplitudes are less sensitive to the effects of the oscillators with lesser amplitudes. Recall that we have defined $R_{jk}=\frac{r_j}{r_k}$ and $\Phi_{jk}=\phi_j-\phi_k$ so that we can more compactly write the equations as
\begin{align}
    \ddt r_j&=(\a_j-r_j^2)r_j+\frac{\k}{N}\sum_{l=1}^N\cos(\Phi_{lj})r_l-r_j,\label{eq:hom1}\\
    \ddt \phi_j&=\frac{\k}{N}\sum_{l=1}^NR_{lj}\sin(\Phi_{lj}).\label{eq:hom2}
\end{align}

First let us see that the amplitudes of each oscillator are eventually bounded by the largest value $\a_+=\max_j \a_j$.
\begin{align}\label{in:max1}
    \ddt r_+&=(\a_+-r_+^2)r_++\frac{\k}{N}\sum_{l=1}^N\cos(\phi_l-\phi_+)r_l-r_+, \nonumber\\
    &\leq (\a_+-r_+^2)r_+.
\end{align}

Thus all amplitudes are bounded from above, and further any fixed point satisfies $r_{j,\infty}^2\leq \a_+$ for all $j=1,...,N$. However, as for the $N=2$ case, Stuart-Landau oscillators have the potential to experience Amplitude death under certain configurations. Before investigating such conditions, lets us introduce the idea of sectorial solutions.

\subsection{Sectorial Solutions}

Each oscillator is evolving in the complex plane, however, note that if initially all oscillators begin on one side of a half-plane, then this property persists in time. Considering the complex plane as equivalent to $\R^2$, and letting $\ell \in \R^2$ be a unit functional such that its kernel is a line through the origin. Suppose initially that $\ell(z_j(0))\geq 0$ for all $j=1,...,N$, then letting $\ell(z_-(t))=\min_j\ell(z_j(t))$, and differentiating,
\begin{align}
    \ddt \ell(z_-(t))&=(\a_--r_-^2)\ell(z_-(t))+\frac{\k}{N}\sum_{l=1}^N(\ell(z_l(t))-\ell(z_-(t))),\\
    &\geq (\a_--r_-^2)\ell(z_-(t)).
\end{align}
Now integrating the above gives,
\begin{align}\label{invariance}
    \ell(z_-(t))\geq \ell(z_-(0))e^{c(t)}\geq 0.
\end{align}
\begin{lemma}[Sectorial Principle]
    Any solution to \eqref{SLhom} that begins in a sector
    \begin{align}
        \Sigma_{\cF}=\bigcap_{\ell \in \cF} \left\{\bz : \ell(\bz)\geq0\right\},
    \end{align}
    remains in the sector for all time.
\end{lemma}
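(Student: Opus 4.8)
The plan is to reduce the claim to a one-dimensional differential inequality for each functional in the family $\cF$, exactly as foreshadowed by the computation culminating in \eqref{invariance}. Fix a single $\ell \in \cF$. Since $\ell\colon \C \cong \R^2 \to \R$ is real-linear and the Hopf coefficient $\a_j - |z_j|^2$ is a \emph{real} scalar, applying $\ell$ to \eqref{SLhom} commutes with that scalar multiplication, so the transformed quantities $\ell(z_j)$ solve the scalar system
\begin{align}
    \ddt \ell(z_j) = (\a_j - r_j^2)\,\ell(z_j) + \frac{\k}{N}\sum_{l=1}^N\bigl(\ell(z_l) - \ell(z_j)\bigr), \qquad j = 1,\dots,N.
\end{align}
The coupling is a graph-Laplacian term in the variables $\ell(z_j)$, which is precisely the structure that forces a lower envelope to be nondecreasing at its minimizing node.

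Next I would track the lower envelope $m(t) := \min_{j} \ell(z_j(t))$. This is Lipschitz in $t$ (a minimum of finitely many $C^1$ functions), hence differentiable almost everywhere, and at any time where the minimum is attained at an index $j_-$ one has $\ell(z_l(t)) \geq \ell(z_{j_-}(t)) = m(t)$ for every $l$, so the coupling term is nonnegative and
\begin{align}
    \ddt m(t) \geq \bigl(\a_{j_-} - r_{j_-}^2\bigr)\, m(t).
\end{align}
To convert this into sign preservation I would invoke the a priori amplitude bound from \eqref{in:max1}: every $r_j^2$ stays bounded, so $\beta(t) := \a_{j_-(t)} - r_{j_-(t)}^2$ is a bounded measurable coefficient. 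Setting $w(t) := m(t)\exp\bigl(-\int_0^t \beta(s)\,\ds\bigr)$ and differentiating gives $\ddt w \geq 0$ almost everywhere, so $w$ is nondecreasing and $m(t) = w(t)\exp\bigl(\int_0^t \beta\bigr) \geq m(0)\exp\bigl(\int_0^t \beta\bigr)$. Since $z_j(0) \in \Sigma_{\cF}$ forces $m(0) = \min_j \ell(z_j(0)) \geq 0$, we conclude $m(t) \geq 0$, i.e. $\ell(z_j(t)) \geq 0$ for all $j$ and all $t \geq 0$.

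Finally, since the argument holds verbatim for every $\ell \in \cF$ and the bound $\ell(z_j(t)) \geq m_\ell(t) \geq 0$ is simultaneous across the family, each oscillator stays in $\bigcap_{\ell \in \cF}\{z : \ell(z)\geq 0\} = \Sigma_{\cF}$ for all time, which is the assertion. The only genuinely delicate point is the nonsmoothness of the envelope $m(t)$: the minimizing index $j_-$ can switch along the trajectory, so the differential inequality must be read in the sense of Dini derivatives, or one appeals to the standard comparison lemma for minima of finitely many $C^1$ functions; everything else is the linear Gr\"onwall-type bookkeeping already displayed in \eqref{invariance}.
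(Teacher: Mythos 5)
Your proposal is correct and follows essentially the same route as the paper: apply the real-linear functional $\ell$ to the system, differentiate the minimum $\min_j \ell(z_j)$, observe that the Laplacian coupling is nonnegative at the minimizing index, and conclude sign preservation by a Gr\"onwall-type integration, exactly as in the computation leading to \eqref{invariance}. Your added care about the nonsmoothness of the lower envelope (Dini derivatives / Rademacher) is a point the paper handles implicitly, but it is the same argument.
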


In particular, if $\ell(z_j(0))>0$ for all $j=1,...,N$, then this is also preserved in time, although the possibility of asymptotic Amplitude Death could yield $\lim_{t\to \infty}\ell(z_j(t))=0$. However, as was seen in the case of $N=2$ oscillators, it is possible to characterize synchronization within Amplitude Death configurations via analysis of the phase difference variable $\Phi$ and the amplitude ratio $R$. Indeed, observing the phase equation \eqref{eq:hom2} we can see that as long as the ratios $R_{jk}=\frac{r_j}{r_k}$ are controlled, then we can continue to analyze the phase behavior.

\begin{lemma}\label{l:maxhom}
    Let $\{z_j\}_{j=1}^N$ be strictly sectorial solutions to \eqref{SLhom}. Then there exist constants $c$ and $C$ such that the ratios $R_{jk}(t)=\frac{r_j}{r_k}$ satisfy $0<c\leq R_{jk}(t)\leq C$, for all $t>0$.
\end{lemma}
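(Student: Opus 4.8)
The plan is to transfer the entire question to the \emph{linear} dynamics of a support functional, for which the mean-field coupling acts as an order-preserving, strongly mixing term, and then to bound the spread of the projected coordinates by a single scalar differential inequality. First I would fix the geometry: since the solution is strictly sectorial it is confined to a proper sector of half-opening $\b<\frac{\pi}{2}$ about some unit direction $\hat n$. Writing $\ell=\langle\,\cdot\,,\hat n\rangle$ and $x_j:=\ell(z_j)$, strict sectoriality gives $x_j(t)>0$ for all $t$ together with the two-sided comparison $\cos\b\,r_j\le x_j\le r_j$. Hence each $R_{jk}=r_j/r_k$ is, up to the fixed factor $(\cos\b)^{-1}$, comparable to $x_j/x_k$, so it suffices to bound the ratios $x_j/x_k$ above and below. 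Applying $\ell$ to \eqref{SLhom} and using that $\a_j-r_j^2$ is real, the projected coordinates solve the linearly coupled system
\[
\ddt x_j=(\a_j-r_j^2)x_j+\frac{\k}{N}\sum_{l=1}^N(x_l-x_j),
\]
in which the coupling is exactly the complete-graph Laplacian acting on $(x_1,\dots,x_N)$.

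Next I would control the spread $D:=\log x_+-\log x_-$, where $x_+=\max_j x_j$ and $x_-=\min_j x_j$. The function $D$ is locally Lipschitz, and at a.e.\ $t$ its derivative is evaluated at an active maximizing index $p$ and minimizing index $q$. The $-\k$ self-coupling terms cancel, and since $\sum_l x_l\ge x_+>0$ while $\tfrac{1}{x_+}-\tfrac{1}{x_-}\le0$, the Laplacian contribution is bounded above by $\frac{\k}{N}\bigl(\tfrac{1}{x_+}-\tfrac{1}{x_-}\bigr)x_+=\frac{\k}{N}(1-e^{D})$. Combining this with $\a_p-\a_q\le a$ and $r_q^2-r_p^2\le M^2$, where $M$ is the uniform amplitude bound furnished by \eqref{in:max1}, yields
\[
\ddt D\le a+M^2+\frac{\k}{N}\bigl(1-e^{D}\bigr).
\]
The restoring term $-\frac{\k}{N}e^{D}$ diverges as $D\to\infty$, so there is a threshold $D^\ast$ with $a+M^2+\frac{\k}{N}(1-e^{D^\ast})=0$, and the right-hand side is negative for $D>D^\ast$. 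Thus $D(t)\le\max\{D(0),D^\ast\}$ for all $t$, so $x_+/x_-$ stays bounded by a constant depending only on the data. Feeding this back through $\cos\b\,r_j\le x_j\le r_j$ produces uniform constants $0<c\le R_{jk}(t)\le C$, which is the claim.

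The main obstacle is the second step: the term $\a_j-r_j^2$ couples the projected dynamics to the amplitudes, so $x_j$ is not a genuinely linear flow and one cannot simply invoke Laplacian contractivity. The crux is to recognize that this nonlinear forcing contributes only a \emph{bounded, index-independent} shift of size at most $a+M^2$ to the spread equation (the $r_j^2$ part being harmless because $-(r_p^2-r_q^2)\le M^2$), while the genuinely influential term---the coupling---is in fact stabilizing and, crucially, supplies a \emph{super-linear} $-e^{D}$ restoring force that dominates any bounded driving. Care is also required to justify the a.e.\ differentiation of the max and min through standard Dini-derivative arguments, and to guarantee $x_j>0$ on all of $[0,\infty)$; this last point is exactly where strict, as opposed to merely weak, sectoriality is essential, since it keeps $\log x_\pm$ finite and preserves the comparison $r_j\asymp x_j$ with a uniform constant even along trajectories approaching amplitude death.
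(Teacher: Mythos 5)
Your proof is correct, but it takes a genuinely different route from the paper's. The paper works directly with the extremal amplitude ratio $R=\max_{j,k}r_j/r_k$: it differentiates via Rademacher's lemma, isolates the $\pm$ terms of the coupling sum to produce a stabilizing contribution $\frac{\k}{N}\cos(\Phi_{-+})(1-R^2)$, bounds the remaining off-extremal terms crudely by $\k R$, and invokes strict sectoriality in the form $\cos(\Phi_{jk})\geq c_0>0$ so that the $-R^2$ term survives and dominates the linear growth. You instead linearize the coupling by projecting onto a support direction $\hat n$ of the sector, so that the coordinates $x_j=\ell(z_j)$ obey a complete-graph Laplacian system with a bounded, real scalar forcing $(\a_j-r_j^2)$, and then close a scalar inequality for the log-spread $D$ whose superlinear restoring term $-\frac{\k}{N}e^{D}$ beats the bounded drive $a+M^2$; sectoriality enters only through the two-sided comparison $\cos\b\,r_j\leq x_j\leq r_j$ and through keeping $x_j>0$. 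Your route is slightly more robust: it only needs the trajectory to remain in a proper subsector of an open half-plane (pairwise phase differences bounded away from $\pi$), whereas the paper's estimate as written requires $\cos(\Phi_{jk})$ uniformly positive, i.e.\ pairwise differences bounded away from $\pi/2$; it also sidesteps the trigonometric bookkeeping. What the paper's computation buys in exchange is that it stays in the polar variables $(R_{jk},\Phi_{jk})$ that are reused immediately afterwards in the synchronization lemma, so it integrates more directly with the rest of Section~\ref{S:hom}. Both arguments deliver the same uniform two-sided bound on $R_{jk}$, and your appeal to a.e.\ differentiation of the max and min, the global amplitude bound from \eqref{in:max1}, and the strict positivity of $\ell(z_j)$ from \eqref{invariance} are all justified by material already in the paper.
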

\begin{proof}
    First note that from \eqref{in:max1} we have beyond some time $T>0$ each $r_j^2(t)\leq \a_++\e$. Hence we must show that if for some $j$, $r_j\to 0$, then all $r_k \to 0$ at the same rate so that $R_{jk}$ remains bounded.

    Let $R=\max_{j,k} R_{jk}$ and we compute with Rademacher's Lemma
    \begin{align}\label{eq:R1}
    \ddt R=R(\a_+-\a_-+r_-^2-r_+^2)+\frac{\k}{N}R\sum_{l=1}^N\left(\cos(\Phi_{l+})R_{l+}-\cos(\Phi_{l-})R_{l-}\right).
\end{align}

Now picking out the $+$ and $-$ index for $l$ within the sum, we get
\begin{align}
    \ddt R=R(\a_+-\a_-+r_-^2-r_+^2)+\frac{\k}{N}\cos(\phi_--\phi_+)\left(1-R^2\right)+\frac{\k}{N}R\sum_{l\neq -,+}\left(\cos(\Phi_{l+})R_{l+}-\cos(\Phi_{l-})R_{l-}\right).
\end{align}
Now as $\{z_j\}_{j=1}^N$ are strictly sectorial solutions, we have $\cos(\Phi_{jk})\geq c_0>0$ for all $j,k=1,...,N$. Further as $r_+(t)=\max_j r_j$ we have $R_{l+}\leq 1$. Therefore
\begin{align}
    \ddt R\leq R(\a_+-\a_-+r_-^2-r_+^2+\k)+\frac{\k}{N}c_0(1-R^2)
\end{align}
Therefore the negative leading order coefficient guarantees that $R$ is bounded from above and there exists $c,C>0$ such that $0<c\leq R_{jk}(t)\leq C$.
\end{proof}

Now let us compute two more useful equations for the system. First, defining the direction of each oscillator as $\tilde{z}_j=\frac{z_j}{r_j}$, we have
\begin{align}
    \ddt \tilde{z}_j=\frac{\k}{N}\sum_{l=1}^NR_{lj}\left(\tilde{z}_l-\tilde{z}_j\cos(\Phi_{lj})\right).
\end{align}

Further, we can derive an equation for the angle between two individual oscillators as $\cos(\Phi_{ij})=\tilde{z}_i\cdot\tilde{z}_j$,
\begin{align}\label{eq:angles}
    \ddt \cos(\Phi_{ij})&=\frac{\k}{N}\sum_{l=1}^NR_{lj}(\cos(\Phi_{il})-\cos(\Phi_{ij})\cos(\Phi_{lj})),\\
    &+\frac{\k}{N}\sum_{l=1}^NR_{li}(\cos(\Phi_{jl})-\cos(\Phi_{ji})\cos(\Phi_{li})).
\end{align}
With these we can prove full phase synchronization exponentially in time so that $\Phi_{ij}\to 0$ for all $i,j=1,...,N$.
\begin{lemma}\label{l:fullsynch}
    Suppose $\{z_j\}_{j=1}^N$ is a strictly sectorial solution to \eqref{SLhom}. Then the system undergoes full phase synchronization so that $\Phi_{ij}\to 0$ for all $i,j=1,...,N$ exponentially fast.
\end{lemma}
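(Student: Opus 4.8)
The plan is to track the extremal pairwise alignment $m(t) := \min_{i,j} \cos(\Phi_{ij}(t))$, which measures the distance of the configuration from full synchronization (attained when $m=1$), and to show $m(t) \to 1$ exponentially by deriving a closed differential inequality from \eqref{eq:angles}. Two ingredients are already in hand: strict sectoriality guarantees $\cos(\Phi_{jk}(t)) \ge c_0 > 0$ for all pairs, hence $m(t) \ge c_0 > 0$ uniformly in $t$; and Lemma \ref{l:maxhom} gives uniform two-sided control $0 < c \le R_{jk}(t) \le C$ on the amplitude ratios, so the coefficients in \eqref{eq:angles} never degenerate, even along trajectories approaching amplitude death.

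First I would invoke Rademacher's lemma to differentiate $m(t)$ at almost every $t$, selecting a pair $(i,j)$ realizing the minimum at that instant, so that $\ddt m = \ddt \cos(\Phi_{ij})$ is given by the right-hand side of \eqref{eq:angles}. Since $(i,j)$ realizes the minimum, every pairwise cosine appearing there satisfies $\cos(\Phi_{il}) \ge m$ and $\cos(\Phi_{jl}) \ge m$, while $\cos(\Phi_{lj}), \cos(\Phi_{li}) \le 1$; combined with $m \ge c_0 > 0$ this forces each summand to be nonnegative, since for instance $\cos(\Phi_{il}) - \cos(\Phi_{ij})\cos(\Phi_{lj}) \ge m\,(1 - \cos(\Phi_{lj})) \ge 0$. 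To extract a rate rather than mere monotonicity I keep the $l=i$ term of the first sum, which equals $R_{ij}(1 - \cos^2(\Phi_{ij})) = R_{ij}(1 - m^2) \ge c(1-m^2)$, and discard the remaining nonnegative terms. This yields
\begin{align}
\ddt m \ge \frac{\k c}{N}(1 - m^2) \ge \frac{\k c}{N}(1 - m),
\end{align}
the last step using $m \ge 0$. Setting $u = 1 - m \ge 0$ gives $\dot u \le -\tfrac{\k c}{N} u$, so Gr\"onwall's inequality delivers $1 - m(t) \le (1 - m(0))\,e^{-\k c t/N}$.

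Finally, since $1 - \cos(\Phi_{ij}) \le 1 - m$ for every pair and $1 - \cos(\Phi_{ij}) = 2\sin^2(\Phi_{ij}/2)$, the exponential decay of $1 - m$ transfers to $|\Phi_{ij}| \to 0$ exponentially (at half the rate), giving full phase synchronization for all $i,j = 1,\dots,N$. The main obstacle is not the Gr\"onwall step but securing the sign structure in \eqref{eq:angles}: the argument genuinely needs $m \ge c_0 > 0$ so that $\cos(\Phi_{ij})$ can be factored out with the correct sign, and it needs the uniform lower bound $R_{ij} \ge c$ from Lemma \ref{l:maxhom} so that the surviving $l=i$ term does not vanish in an amplitude-death regime where the individual $r_j \to 0$. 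One should also note that the minimizing pair may switch in time, which is precisely what Rademacher's lemma accommodates, since the inequality holds for whichever pair is extremal and no continuity of the index is required. As a cross-check, one could instead track the phase diameter $d(t) = \max_j \phi_j - \min_j \phi_j$ directly from \eqref{eq:hom2}, obtaining $\ddt d \le -\tfrac{2\k c}{N}\sin d$ by the same sectoriality and ratio bounds, which gives the same exponential rate.
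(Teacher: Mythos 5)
Your proposal is correct and follows essentially the same route as the paper: differentiate the extremal pairwise cosine via Rademacher's lemma, show every summand in \eqref{eq:angles} is nonnegative using strict sectoriality, extract a coercive term with the uniform ratio bounds of Lemma~\ref{l:maxhom}, and close with Gr\"onwall. The only (harmless) local differences are that you obtain nonnegativity algebraically from minimality, $\cos(\Phi_{il})-\cos(\Phi_{ij})\cos(\Phi_{lj})\ge m\,(1-\cos(\Phi_{lj}))\ge 0$, where the paper uses the sum-angle identity $\cos(\Phi_{+l})-\cos(\Phi_{+-})\cos(\Phi_{l-})=\sin(\Phi_{+-})\sin(\Phi_{l-})$, and that you extract the decay rate from the single diagonal term $R_{ij}(1-\cos^2(\Phi_{ij}))$ rather than from the full sum — if anything a slightly cleaner way to reach \eqref{sync}.
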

\begin{proof}
Without loss of generality assume all $z_j(0)$ are contained in the upper half-plane. This property is preserved in time, further letting $+,-$ represent the agents such that $\cos(\Phi_{ij})$ is minimized. Differentiating we have,

\begin{align}\label{eq:anglesmax}
    \ddt \cos(\Phi_{+-})&=\frac{\k}{N}\sum_{l=1}^NR_{l-}(\cos(\Phi_{+l})-\cos(\Phi_{+-})\cos(\Phi_{l-})),\\
    &+\frac{\k}{N}\sum_{l=1}^NR_{l+}(\cos(\Phi_{-l})-\cos(\Phi_{+-})\cos(\Phi_{l+})).
\end{align}
Now as
\begin{align}
    \Phi_{+-}=\phi_+-\phi_l+\phi_l-\phi_-=\Phi_{+l}+\Phi_{l-}<\pi-\d,
\end{align}
for some $\d>0$, since $\{z_j\}_{j=1}^N$ is a sectorial solution. Therefore
\begin{align}
    \cos(\Phi_{+l})-\cos(\Phi_{+-})\cos(\Phi_{l-})&=\cos(\Phi_{+-}+\Phi_{-l})-\cos(\Phi_{+-})\cos(\Phi_{l-}),\\
    &=\sin(\Phi_{+-})\sin(\Phi_{l-})\geq 0,
\end{align}
and similarly for the other summand. Therefore both sums are nonnegative, giving monotonic growth of $\cos(\Phi_{+-})$, and further, as each $R_{jk}$ is bounded from above and below by Lemma \ref{l:maxhom} we have for $R=\max_{jk} R_{jk}$
\begin{align}
    \ddt (1- \cos(\Phi_{+-}))\leq -c\frac{\k}{N R}\sum_{l=1}^N(\cos(\Phi_{+l})+\cos(\Phi_{l-}))(1-\cos(\Phi_{+-})),
\end{align}
where,
\begin{align}
   \cos(\Phi_{+l})+\cos(\Phi_{l-})=2\cos\left(\frac{1}{2}\Phi_{+-}\right)\cos\left(\frac{1}{2}(\Phi_{+l}+\Phi_{-l})\right)\geq c>0.
\end{align}
And we have,
\begin{align}\label{sync}
    \ddt (1-\cos(\Phi_{+-}))\leq-C\frac{1}{R}(1-\cos(\Phi_{+-})).
\end{align}
As $R$ is bounded Gr\"onwall's inequality grants $\Phi_{+-}\to 0$ exponentially fast.
\end{proof}

Now let us examine the amplitude behavior. As $\cos(\Phi_{jk})\to 1$ exponentially fast for each $j,k=1,...,N$, the amplitude equation can be written as

\begin{align}
    \ddt r_j=(\a_j-r_j^2)r_j+\frac{\k}{N}\sum_{l=1}^N(r_l-r_j)+E_j(t),\label{eq:amppert}
\end{align}
where $E_j(t)$ are exponentially decaying terms from the phase synchronization. Ignoring the exponentially decaying term for the moment. This system can be viewed as a first order dynamical opinion game which has associated payout function,
\begin{equation}\label{eq:pNFJ}
    p_j(\br)=\frac{\k}{2N}\sum_{l=1}^N (r_l-r_j)^2+\left(\frac{1}{4}r_j^{4}-\frac{1}{2}\a_jr_j^2\right).
\end{equation}
Further the system is governed by a perturbed gradient flow structure
\begin{equation}\label{eq:GF}
\ddt \br=-\nabla P(\br) + \bE(t),
\end{equation}
for $\bE(t)=(E_1(t),...,E_N(t))$ each of the exponentially decaying terms from the phase synchronization, and
\begin{align}
P(\br)=\frac{\k}{4N}\sum_{i,j=1}^N(r_i-r_j)^2+\frac{1}{4}\sum_{i=1}^N r_i^{4}-\frac{1}{2}\sum_{i=1}^N\a_ir_i^2.
\end{align}
Although $P(\br)$ is not globally convex, for the case of $\a_j>0$, for each $j=1,...,N$, existence and uniqueness of a stable fixed point for which any trajectory eventually converges to was shown in \cite{LRS}. Further given the complete graph structure and $\a_j>0$ there is an ordering for the fixed point so that 
\begin{align}
    0<\a_-\leq r_{1,\infty}^2\leq ...\leq r_{j,\infty}^2\leq r_{j+1,\infty}^2\leq ...\leq r_{N,\infty}^2\leq \a_+.
\end{align}

On the other hand, if all $\a_j< 0$, then we see from \eqref{in:max1} and Lemma \ref{l:maxhom} that all $r_j \to 0$ at the same exponential rate. The next subsection is dedicated to dealing with the case of mixed $\a_j$ so that some oscillators are subcritical and some are supercritical.\\

\subsection{Mixed parameters}
In the mixed case where there exist $\a_j<0$ and $\a_k>0$, we still achieve exponential phase synchronization for sectorial solutions via Lemma \ref{l:fullsynch}. However, notice that the Amplitude Death fixed point so that $r_{j,\infty}\equiv0$ for all $j=1,...,N$ always exists. Let us now determine under what conditions is Amplitude Death stable.

First, let us write the model without the exponentially decaying terms.
\begin{align}\label{eq:ramps}
    \ddt r_j=(\a_j-r_j^2)r_j+\frac{\k}{N}\sum_{l=1}^N(r_l-r_j),
\end{align}
for $\a_j \in \R$, and $r_j\in \R_{\geq0}$.

\begin{lemma}\label{msad}
    Let $\{r_j\}_{j=1}^N$ be a solution to \eqref{eq:ramps}. Then the Amplitude Death fixed point 
 so that $r_{j,\infty}\equiv 0$ is stable if all of the following conditions hold:
    \begin{align}
        &\sum_{j=1}^N \a_j<0, \label{cond1}\\
        &\k>\a_j \ \forall j, \label{cond2}\\ 
        &\sum_{j=1}^N \frac{\a_j}{\k-\a_j}<0. \label{cond3}
    \end{align}  
\end{lemma}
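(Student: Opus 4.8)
The plan is to prove local asymptotic stability by linearizing \eqref{eq:ramps} at the origin and exploiting the rank-one structure of the all-to-all coupling. Since $r_j=0$ for all $j$ is always a fixed point of \eqref{eq:ramps} and the cubic terms $-r_j^3$ vanish to second order, the stability is governed by the Jacobian
\begin{align}
J = \diag(\a_1,\dots,\a_N) - \k\,\Id + \frac{\k}{N}\,\mathbf{1}\mathbf{1}^{T},
\end{align}
where $\mathbf{1}=(1,\dots,1)^{T}$. Equivalently, using the gradient-flow structure $\dot{\br}=-\nabla P(\br)$ from \eqref{eq:GF}, it suffices to show that the Hessian
\begin{align}
\nabla^2 P(0) = \diag(\k-\a_1,\dots,\k-\a_N) - \frac{\k}{N}\,\mathbf{1}\mathbf{1}^{T} =: D - \frac{\k}{N}\,\mathbf{1}\mathbf{1}^{T}
\end{align}
is positive definite. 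Since $J=-\nabla^2 P(0)$ is symmetric, all its eigenvalues are real and no oscillatory instability can occur.

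The heart of the argument is the rank-one structure. Condition \eqref{cond2}, $\k>\a_j$ for all $j$, guarantees that $D$ is positive definite. We then subtract the positive-semidefinite rank-one matrix $\frac{\k}{N}\mathbf{1}\mathbf{1}^{T}$. By the Sherman--Morrison determinant formula (equivalently a Schur-complement argument), for $D\succ 0$ one has
\begin{align}
D-\frac{\k}{N}\,\mathbf{1}\mathbf{1}^{T}\succ 0 \quad\Longleftrightarrow\quad \frac{\k}{N}\,\mathbf{1}^{T}D^{-1}\mathbf{1}<1 \quad\Longleftrightarrow\quad \frac{\k}{N}\sum_{j=1}^N\frac{1}{\k-\a_j}<1.
\end{align}
Alternatively, by the interlacing theorem for rank-one perturbations, every eigenvalue of $J$ except possibly the largest lies below $\max_j(\a_j-\k)<0$; the sign of the remaining top eigenvalue is read off from the secular equation $1+\frac{\k}{N}\sum_j\frac{1}{(\a_j-\k)-\l}=0$ evaluated at $\l=0$, which yields the same scalar inequality.

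It remains to identify this inequality with condition \eqref{cond3}. Using the identity $\frac{\a_j}{\k-\a_j}=\frac{\k}{\k-\a_j}-1$ and summing over $j$ gives
\begin{align}
\sum_{j=1}^N\frac{\a_j}{\k-\a_j}=\k\sum_{j=1}^N\frac{1}{\k-\a_j}-N,
\end{align}
so \eqref{cond3} is exactly equivalent to $\frac{\k}{N}\sum_j\frac{1}{\k-\a_j}<1$. Hence \eqref{cond2} and \eqref{cond3} together force $\nabla^2P(0)\succ 0$ and asymptotic stability. Finally, condition \eqref{cond1} is in fact a consequence of the other two: writing $w_j=\frac{1}{\k-\a_j}>0$, the weights $w_j$ increase with $\a_j$, so $(\a_j)$ and $(w_j)$ are similarly ordered and Chebyshev's sum inequality gives $N\sum_j\a_j w_j\geq\left(\sum_j\a_j\right)\left(\sum_j w_j\right)$; since \eqref{cond3} makes the left-hand side negative while $\sum_j w_j>0$, it follows that $\sum_j\a_j<0$. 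I expect the main obstacle to be cleanly isolating the single potentially unstable eigenvalue — justifying that only the top mode of the rank-one--perturbed matrix can cross zero — and carrying out the Sherman--Morrison/secular reduction to the scalar condition; the subsequent translation to \eqref{cond3} and the Chebyshev remark on \eqref{cond1} are then routine.
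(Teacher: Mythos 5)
Your proposal is correct and follows essentially the same route as the paper: linearize at the origin, recognize the Jacobian as a positive diagonal matrix (condition \eqref{cond2}) minus the rank-one matrix $\frac{\k}{N}\mathbf{1}\mathbf{1}^{T}$, and reduce positive definiteness to the scalar inequality $\frac{\k}{N}\sum_j\frac{1}{\k-\a_j}<1$, which is exactly \eqref{cond3}; the paper does this via the determinant lemma and Sylvester's leading-minor criterion rather than your Schur-complement/interlacing phrasing, but these are the same linear-algebraic fact. Your observation that \eqref{cond1} is implied by \eqref{cond2}--\eqref{cond3} also matches the paper's remark that \eqref{cond3} is strictly stronger than \eqref{cond1}.
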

\begin{proof}
Notice that the above encompasses the already known situations where all $\a_j<0$. Now let us define the fixed point map
\begin{align}\label{eq:FMap}
    F(\br)_j=\k r_j-\frac{\k}{N}\sum_{l=1}^Nr_l+(r_j^2-\a_j)r_j.
\end{align}
Indeed, $F(\br)=0$ corresponds to a fixed point of the system, where we have changed the sign for convenience. Now, if all the eigenvalues of the linearization have positive real part, then the fixed point is stable.  Computing the Jacobian matrix yields
\begin{align}\label{Jacobian}
    D_{\br}F(\br)=G-M,
\end{align}
where $G$ is a diagonal matrix given by $\{g_j\}_{i=1}^N=\k+3r_j^2-\a_j$ and $M$ is the constant matrix with values $M_{ij}=-\frac{\k}{N}$. Therefore the determinant of the matrix is
\begin{align}\label{eq:det}
    \det{D_{\br}F(\br)}=\prod_{j=1}^Ng_j\left(1-\frac{\k}{N}\sum_{j=1}^N\frac{1}{g_j}\right).
\end{align}
Now to analyze the stability of the zero fixed point, $r_j=0$ for all $j=1,...,N$, we have $g_j=\k-\a_j$. Indeed, $\k>\a_j$ for all $j=1,...,N$, is required to have the main diagonal positive, and in this way  $\prod_{j=1}^Ng_j>0$ so that we need only determine the sign of $\left(1-\frac{\k}{N}\sum_{j=1}^N\frac{1}{g_j}\right)$.

Now let us rewrite
\begin{align}\label{eq:ADcondition}
    \frac{\k}{N}\sum_{j=1}^N\frac{1}{g_j}=\frac{1}{N}\sum_{j=1}^N \frac{\k}{\k-\a_j}=1+\frac{1}{N}\sum_{j=1}^N \frac{\a_j}{\k}+\frac{\a_j^2}{\k^2-\a_j\k}.
\end{align}
As $\k>\a_j$ for all $j$, we have $\frac{\a_j^2}{\k^2-\a_j\k}>0$ and hence to guarantee stability of the zero fixed point we must have $\sum_{j=1}^N \a_j<0$. Further we need
\begin{align}
    \frac{1}{N}\sum_{j=1}^N \frac{\a_j}{\k}+\frac{\a_j^2}{\k^2-\a_j\k}= \frac{1}{N}\sum_{j=1}^N\frac{\a_j}{\k-\a_j}<0,
\end{align}
which is what gives us the final condition that $\sum_{j=1}^N\frac{\a_j}{\k-\a_j}<0$. In fact this shows that the condition \eqref{cond3} is strictly stronger than condition \eqref{cond1}, however, given the necessity of condition \eqref{cond2} to hold to be able to write down \eqref{cond3}, we include all three conditions.

In this regime we see that the main diagonal of the matrix is positive and the determinant is also positive. Performing the same computation on the upper left minors, $n<N$,
\begin{align}
    M_n=\prod_{j=1}^n g_j\left(1-\frac{\k}{N}\sum_{j=1}^n\frac{1}{g_j}\right)>0,
\end{align}
grants the stability of the zero fixed point exactly under the conditions of Lemma \ref{msad}.
\end{proof}

Now, the last regime is proving stability of the Active fully synchronized state when one of the conditions of Lemma \ref{msad} fails to hold.

\begin{lemma}\label{l:actfullsynch}
    Let $\{r_j\}_{j=1}^N$ be a solution to \eqref{eq:ramps}. Then there is a unique stable Active fixed point 
 so that $r_{j,\infty}> 0$ for each $j$, if at least one of the following conditions hold:
    \begin{align}
        &\sum_{j=1}^N \a_j>0, \label{2cond1}\\
        &\k<\a_j \ \ \text{for some } j, \label{2cond2}\\ 
        &\sum_{j=1}^N \frac{\a_j}{\k-\a_j}>0. \label{2cond3}
    \end{align}  
\end{lemma}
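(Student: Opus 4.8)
The plan is to reduce the fixed-point problem to a scalar self-consistency equation, exploit concavity for existence and uniqueness, and then read off stability from the Hessian minors already computed in Lemma~\ref{msad}. Since \eqref{eq:ramps} is the gradient flow \eqref{eq:GF}, a point $\br$ with all $r_j>0$ is an Active fixed point precisely when $\nabla P(\br)=0$, i.e. $F(\br)=0$. Writing $s=\frac1N\sum_{l}r_l$, the $j$-th fixed-point equation $F(\br)_j=0$ becomes $\psi_j(r_j)=\k s$, where $\psi_j(r):=r^3+(\k-\a_j)r$. For $s>0$ the equation $\psi_j(r)=\k s$ has a unique solution $r_j(s)$ on $(0,\infty)$: if $\k\ge\a_j$ then $\psi_j$ is strictly increasing on $[0,\infty)$, while if $\k<\a_j$ then $\psi_j$ dips below zero and returns to $0$ at $r=\sqrt{\a_j-\k}$ before increasing to $+\infty$, so $\psi_j(r)=\k s>0$ still has exactly one positive root, lying on the increasing branch $r>\sqrt{\a_j-\k}$. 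In both cases $r_j(s)>0$, so Active fixed points are in bijection with the positive roots $s^\ast$ of the scalar equation $S(s)=s$, where $S(s):=\frac1N\sum_{j}r_j(s)$.

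Next I would record the shape of $S$. Differentiating $\psi_j(r_j(s))=\k s$ gives $r_j'(s)=\k/\psi_j'(r_j)=\k/g_j$ with $g_j=\k+3r_j^2-\a_j=\psi_j'(r_j)>0$ on the selected branch, and a second differentiation yields $r_j''(s)=-6r_j\,(r_j')^2/\psi_j'(r_j)<0$; thus each $r_j$, and hence $S$, is strictly increasing and strictly concave. Since $r_j(s)\sim(\k s)^{1/3}$ as $s\to\infty$, we have $S(s)=o(s)$, so $S(s)<s$ for $s$ large. To produce a crossing I show $S(s)>s$ near the left endpoint under each hypothesis. If $\k<\a_j$ for some $j$ (condition \eqref{2cond2}), then $r_j(s)\to\sqrt{\a_j-\k}>0$ as $s\to0^+$, so $S(0^+)>0$. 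Otherwise $\k\ge\a_j$ for all $j$, all $r_j(s)\to0$, $S(0)=0$, and by \eqref{eq:ADcondition} one gets $S'(0)=\frac{\k}N\sum_j\frac1{\k-\a_j}=1+\frac1N\sum_j\frac{\a_j}{\k-\a_j}$ (with $S'(0^+)=+\infty$ if $\k=\a_j$ for some $j$). Condition \eqref{2cond3} is exactly $S'(0)>1$, and condition \eqref{2cond1} forces \eqref{2cond3} since $\frac{\a_j}{\k-\a_j}>\frac{\a_j}{\k}$ whenever $\k>\a_j$; hence $S(s)>s$ just to the right of $0$. In every case $s\mapsto S(s)-s$ is concave, positive near the left endpoint and negative at $+\infty$, so it has exactly one zero $s^\ast>0$. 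This yields existence and uniqueness of the Active fixed point.

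Finally I would establish stability at $\br=\br(s^\ast)$. All $g_j>0$ there, and the single crossing of a concave function forces $\frac{d}{ds}\big(S(s)-s\big)\big|_{s^\ast}<0$, i.e. $S'(s^\ast)=\frac{\k}N\sum_j\frac1{g_j}<1$; the same strict bound holds for every partial sum, so each leading principal minor $M_n=\prod_{j\le n}g_j\big(1-\frac{\k}N\sum_{j\le n}\frac1{g_j}\big)>0$. By Sylvester's criterion $\nabla^2P(\br)=D_{\br}F(\br)$ is positive definite, hence every eigenvalue of the linearization $-D_{\br}F(\br)$ of \eqref{eq:ramps} lies in the left half-plane and the fixed point is asymptotically stable; coercivity of the quartic potential $P$ then guarantees convergence of the relevant trajectories to it. The main obstacle is the non-monotone regime $\k<\a_j$: there one must select the correct increasing branch of $\psi_j$, confirm that no spurious positive fixed points are introduced and that $S$ stays single-valued and concave, and verify the boundary behavior $S(0^+)>0$, which is precisely what turns condition \eqref{2cond2} into the strict left-endpoint inequality needed to force a crossing.
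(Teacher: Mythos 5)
Your proposal is correct, but it proves existence and uniqueness by a genuinely different mechanism than the paper. The paper works directly with the vector-valued fixed-point map $F$, shows $\det D_{\br}F>0$ at every zero, and then invokes the Brouwer topological degree together with a homotopy from the homogeneous configuration $\hat{\boldsymbol{\a}}$ to conclude $\mathrm{deg}\{F,\cW,\boldsymbol{0}\}=1$; you instead collapse the problem onto the scalar self-consistency equation $S(s)=s$ for the mean $s=\frac1N\sum_l r_l$, and get existence and uniqueness from strict concavity of $S$ plus the endpoint behavior dictated by \eqref{2cond1}--\eqref{2cond3}. Your reduction is sound: for $s>0$ each cubic $\psi_j(r)=\k s$ has exactly one positive root even when $\k<\a_j$ (the root sits on the increasing branch $r>\sqrt{\a_j-\k}$, where $\psi_j'=g_j>0$), so the bijection between Active fixed points and positive roots of $S(s)=s$ holds, and your left-endpoint case analysis covers all three hypotheses, including the edge case $\k=\a_j$ where \eqref{2cond3} is not even well defined. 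Your route is more elementary and self-contained (no degree theory, no homotopy argument), and it has a second payoff on the stability side: the strict transversality $S'(s^\ast)=\frac{\k}{N}\sum_j 1/g_j<1$ falls out of the concave crossing, whereas the paper obtains the analogous bound by substituting the fixed-point relation into $g_j$ and simplifying to $g_j=\frac{3\k}{N}\sum_l R_{lj}^{\infty}$ --- a simplification that is not correct in general (the substitution actually gives $g_j=2r_{j,\infty}^2+\frac{\k}{N}\sum_l R_{lj}^{\infty}$, so the claimed identity $\frac{\k}{N}\sum_j 1/g_j=\frac13$ should be replaced by an inequality); your argument sidesteps this entirely and delivers the needed bound, and hence the positivity of all leading principal minors of the symmetric matrix $D_{\br}F=\nabla^2P$, cleanly. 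What the paper's degree-theoretic approach buys in exchange is robustness: it does not rely on the mean-field structure that lets every oscillator see only the scalar $s$, and so it extends to general interaction graphs as in the cited opinion-dynamics literature, whereas your scalar reduction is specific to all-to-all coupling.
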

\begin{proof}
    First let us see that although Amplitude Death at $r_{j,\infty}\equiv 0$ is always a solution, that under any of the above assumptions \eqref{2cond1}-\eqref{2cond3}, that this state must be unstable.

    Let us first suppose that \eqref{2cond2} holds. Now for this particular $j$ we have
    \begin{align}
        \ddt r_j&=(\a_j-r_j^2)r_j+\frac{\k}{N}\sum_{l=1}^N(r_l-r_j),\\
        &=(\a_j-\k-r_j^2)r_j+\frac{\k}{N}\sum_{l=1}^Nr_l
    \end{align}
    Let $\varepsilon=\frac{1}{2}(\a_j-\k)>0$, then if $r_j^2<\varepsilon$ we have $\ddt r_j>0$. Therefore from Lemma \ref{l:maxhom} all $r_j$ remain active.

    Now suppose that $\k>\a_j$ for all $j$. Note that the computation for the Jacobian in the previous lemma is unchanged. Therefore from \eqref{eq:ADcondition} we see that if either \eqref{2cond1} or \eqref{2cond3} holds, then \eqref{eq:det} yields $\det{D_{\br}F(\br)}<0$ and hence at least one of the eigenvalues is negative and the Amplitude Death fixed point is unstable.

    With that in hand we are seeking a fixed point such that $r_{j,\infty}>0$ for all $j=1,...,N$. Therefore such a fixed point would satisfy
    \begin{align}
        0=(\a_j-r_{j,\infty}^2)r_{j,\infty}+\frac{\k}{N}\sum_{l=1}^N(r_{l,\infty}-r_{j,\infty})
    \end{align}
    as all $r_{j,\infty}>0$ we can divide through by $r_{j,\infty}$ and solve for $r_{j,\infty}^2$,
    \begin{align}
        r_{j,\infty}^2=\a_j-\k+\frac{\k}{N}\sum_{l=1}^NR_{lj}^{\infty}
    \end{align}
    where we recall that $R_{lj}=\frac{r_l}{r_j}$. Now in the computation of the determinant of the Jacobian we can plug in this fixed point for the $r_j^2$ in the formula for $g_j=\k+3r_j^2-\a_j$ so that
    \begin{align}
        g_j=\frac{3\k}{N}\sum_{l=1}^NR_{lj}^{\infty}>0.
    \end{align}
    Thus as $\det{D_{\br}F(\br)}=\prod_{j=1}^Ng_j\left(1-\frac{\k}{N}\sum_{j=1}^N\frac{1}{g_j}\right)$, we again need to show that $\frac{\k}{N}\sum_{j=1}^N\frac{1}{g_j}<1$.
    \begin{align}
        \frac{\k}{N}\sum_{j=1}^N\frac{1}{g_j}=\frac{1}{3}\sum_{j=1}^N\frac{1}{\sum_{l=1}^NR_{lj}^{\infty}}=\frac{1}{3}\sum_{j=1}^N\frac{r_j^{\infty}}{\sum_{l=1}^Nr_{l^{\infty}}}=\frac{1}{3}<1.
    \end{align}
    Thus $\det{D_{\br}F(\br)}>0$ and the computation of the principal minors similarly gives $M_n>0$ guaranteeing stability of the Active fixed point. In order to prove existence and uniqueness we utilize the Brouwer topological degree, see \cite{Cronin}. For a particular region $\cW$, the topological degree of a point $\bx$ is defined as
\begin{equation}
    \mathrm{deg}\{F,\cW,\bx\}=\sum_{\br \in F^{-1}(\boldsymbol{x})} \mathrm{sgn} (\det{D_{\br}F(\br)}).
\end{equation}
Now, since we have determined that  $\mathrm{sgn} (\det{D_{\br}F(\br)})=1$, for any $\br \in F^{-1}(\boldsymbol{0})$, if we can show that the degree is exactly one, then there must be a unique fixed point within the region  $\cW$.

Let us denote
\begin{align}
\langle \bx,\by\rangle= \sum_{j=1}^N  x_j y_j, \ \ \ \ \ \|\bx\|_3^3=\sum_{i=1}^N x_j^3.
\end{align}
We define,
\begin{align}
\cW=\{\bx: x_j \geq 0, \ \e\leq \|\bx\|_{\infty}, \|\bx\|_{3} \leq C\},
\end{align}
where $C>0$ is large, and $\e$ small, to be determined momentarily. We verify that the image of the boundary does not contain the origin, $0 \not\in F(\partial \cW)$. Recall that
\begin{align}
    F(\br)_j=-(\a_j-r_j^2)r_j-\frac{\k}{N}\sum_{l=1}^N(r_l-r_j).
\end{align}
First if $r_j=0$ for some $j$ then $F(\br)_j=-\frac{\k}{N}\sum_{l=1}^Nr_l<0$ since $\|\br\|\geq\e$. Now to check the other two parts of the boundary, if $\|\br\|_{\infty}=\e$ then there is a particular $j$ such that $r_j=\e$ and as $\hat{\br}=(r,...,r)$ for some $r\in \R_+$ is not a fixed point unless all $\a_j\equiv \a$, we also have particular $k$ such that $r_k>\e$. Thus
\begin{align}
    F(\br)_j=(\k-\a_j)\e+\e^3-\frac{\k}{N}\sum_{l=1}^Nr_l<0,
\end{align}
for $\e>0$ small enough.

On the other hand if $\|\br\|=C$ then
\begin{align}
    \frac{1}{N}\sum_{j=1}^N F(\br)_j=-\langle \alpha,\br\rangle+\|\br\|_3^3\geq \|\br\|_3^3-\|\br\|_3\|\a\|_{\frac{3}{2}}>0,
\end{align}
for $C$ large enough.

Therefore the value $\boldsymbol{0}$ of $F$ is regular, and its degree can be computed explicitly by
\begin{align}
\mathrm{deg}\{F,\cW,\boldsymbol{0}\}=\sum_{\br \in F^{-1}(\boldsymbol{0})} \mathrm{sgn} (\det{D_{\br}F(\br)}).
\end{align}
However, we have proved that all Jacobians for $\br \in F^{-1}(\boldsymbol{0})$ are strictly positive. Therefore uniqueness can be shown by proving $\mathrm{deg}\{F,\cW,\boldsymbol{0}\}=1$.

This is certainly true for $\hat{\boldsymbol{\alpha}}=(\a,\dots,\a)$ for a fixed $\a>0$. Indeed, this is because for identical inherent amplitudes, consensus of amplitude values is achieved at exactly the square root of the Hopf-parameter, $\sqrt{\a}$. Now, fix any such $\hat{\boldsymbol{\a}}$ and consider the homotopy of maps
\begin{align}
F^{(\tau)}:=F_{\tau \boldsymbol{\a} + (1-\tau)\hat{\boldsymbol{\a}}}.
\end{align}
First, the starting point $\hat{\boldsymbol{\a}}$ satisfies \eqref{2cond1}, and \eqref{2cond3} automatically. If the final fixed point we are investigating comes from only equation \eqref{2cond2} holding, then pick $\hat{\boldsymbol{\a}}$ to be equal to one of the $\a_j$ for which $\k<\a_j$ holds. In this way $\boldsymbol{0} \not\in F^{(\tau)}(\partial \cW)$ for any $\tau$, and the Invariance under Homotopy Principle applies and hence,
\begin{equation}
\mathrm{deg}\{F_{\boldsymbol{\a}},\cW,\boldsymbol{0}\}=\mathrm{deg}\{F_{\hat{\boldsymbol{\a}}},\cW,\boldsymbol{0}\}=1,
\end{equation}
and the proof of existence and uniqueness is finished.
\end{proof}

With this in hand we are ready to state the main theorem for this section.
\begin{theorem}\label{t:N>2synch2}
    Let $2\leq N<\infty$, $\a_j \in \R$ and $\w_j\equiv 0$ for each $j=1,...,N$. Let $\{z_j\}_{j=1}^N$ be strictly sectorial solutions to \eqref{SLhom}, then for all $\k>0$, $\max_{j,k} \Phi_{jk}\to 0$ exponentially fast, and the amplitude dynamics have each $r_j \to r_j^{\infty}$, where one of the two cases can occur:
    \begin{itemize}
        \item Amplitude death: $r_j^{\infty}=0$ for all $j=1,...,N$ if all the following conditions hold
        \begin{itemize}
        \item $\sum_{j=1}^N\a_j<0$,
        \item $\k>\a_j$, $\forall j=1,...,N$
        \item $\sum_{j=1}^N \frac{\a_j}{\k-\a_j}<0$
        \end{itemize}
        \item Active: $r_j^{\infty}>0$ for each $j=1,...,N$ if at least one of the following conditions hold:
    \begin{itemize}
        \item $\sum_{j=1}^N \a_j>0$,
        \item $\k<\a_j \ \ \text{for some } j$,
        \item $\sum_{j=1}^N \frac{\a_j}{\k-\a_j}>0.$
    \end{itemize}  
    \end{itemize}
    Further, $r_{j,\infty}\leq r_{j+1,\infty}$ for all $j$ and if all $\a_j>0$, then $r_j^{\infty} \in [\min_j \sqrt{\a_j},\max_j \sqrt{\a_j}]$.
\end{theorem}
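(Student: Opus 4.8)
The plan is to prove the theorem by assembling the structural lemmas already established for sectorial solutions, treating the phase and amplitude dynamics in sequence. First I would invoke \lem{l:fullsynch}: since $\{z_j\}_{j=1}^N$ is a strictly sectorial solution, the angular variables synchronize, $\max_{j,k}\Phi_{jk}\to 0$ exponentially fast for every $\k>0$, which is exactly the first assertion. Feeding $\cos(\Phi_{lj})\to 1$ back into the amplitude equations recasts them, as in \eqref{eq:amppert}, as the perturbed gradient flow \eqref{eq:GF}, namely $\ddt \br=-\nabla P(\br)+\bE(t)$, where $\bE(t)$ decays exponentially and $P$ is the quartic potential. All remaining work is then purely about the real amplitude system \eqref{eq:ramps}.

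Next I would establish that $\br(t)$ converges to a single equilibrium. The upper bound \eqref{in:max1} eventually confines each $r_j$ below $\sqrt{\a_+}$, and together with the two-sided ratio bounds of \lem{l:maxhom} this keeps the trajectory in a compact region and away from the coordinate hyperplanes unless all amplitudes vanish simultaneously. Because $P$ is a polynomial (hence real-analytic) and coercive, the unperturbed flow obeys the \L ojasiewicz gradient inequality, so every bounded trajectory converges to a single critical point rather than sliding along a continuum; the perturbation $\bE(t)$ is exponentially small, hence integrable in time, and a standard Duhamel/\L ojasiewicz argument shows the convergence persists for the perturbed flow. Thus $r_j\to r_j^\infty$ with $\br^\infty$ a fixed point of \eqref{eq:ramps}.

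To identify which fixed point is selected, I would use the linear stability computations already carried out. If all three conditions \eqref{cond1}--\eqref{cond3} hold, \lem{msad} makes the amplitude-death point $\br=0$ linearly stable; under the complementary hypotheses, \lem{l:actfullsynch} shows $\br=0$ is a hyperbolic repeller (the Jacobian determinant changes sign) and that there is a \emph{unique} active fixed point, which is itself stable. Hence in the active regime the limit must be that unique active point: the stable set of the unstable amplitude-death state has positive codimension and is not reached by a sectorial trajectory held bounded away from the hyperplanes, and when \eqref{2cond2} holds the barrier $r_j^2\ge\tfrac12(\a_j-\k)$ from the proof of \lem{l:actfullsynch}, combined with \lem{l:maxhom}, forces every $r_k$ to stay active. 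In the amplitude-death regime the same bounds together with the absence of a stable active fixed point leave $\br=0$ as the only admissible limit.

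Finally, the ordering $r_{j,\infty}\le r_{j+1,\infty}$ and the envelope $r_j^\infty\in[\min_j\sqrt{\a_j},\max_j\sqrt{\a_j}]$ when all $\a_j>0$ follow from the fixed-point relation $r_{j,\infty}^2=\a_j-\k+\tfrac{\k}{N}\sum_l R_{lj}^\infty$ by a discrete maximum principle: at the index realizing $\max_j r_{j,\infty}$ the coupling term is nonpositive, forcing $r^2\le\a_+$, and symmetrically at the minimizing index, while monotone dependence on $\a_j$ gives the ordering (this recovers the conclusion quoted from \cite{LRS} in the all-positive case). I expect the main obstacle to be the convergence step: making rigorous that the exponentially decaying coupling error $\bE(t)$ cannot disrupt convergence to a single equilibrium, and, in the mixed-sign regime, confirming that no stable \emph{partial} amplitude-death states (some $r_j^\infty=0$ with others positive) exist to compete with the two advertised outcomes.
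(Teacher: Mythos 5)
Your proposal follows essentially the same route as the paper's proof: Lemma \ref{l:fullsynch} for the exponential phase synchronization, the reduction to the perturbed gradient flow \eqref{eq:GF}, convergence via the \L ojasiewicz gradient inequality with the exponentially decaying error handled as in \cite{LRS,RT}, and Lemmas \ref{msad} and \ref{l:actfullsynch} to identify the selected fixed point. Your additional details (the ratio bounds of Lemma \ref{l:maxhom} excluding partial amplitude death, and the maximum-principle argument for the ordering) are consistent with, and merely flesh out, what the paper leaves to the cited references.
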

\begin{proof}
    Lemma \ref{l:fullsynch} yields $\Phi_{jk} \to 0$ exponentially fast for strictly sectorial solutions, regardless of $\boldsymbol{\a}$ configuration. Lemmas \ref{msad} and \ref{l:actfullsynch} provide the existence, uniqueness and stability of the fixed point within the Amplitude Death and Active Regimes respectively for the system without the exponentially decaying contribution from the synchronization part $\Phi_{jk}\to 0$. Indeed the exponentially decaying term $\bE(t)$ does not affect the existence uniqueness or stability analysis. The final piece is to see convergence to the fixed point. Convergence is achieved via the gradient flow structure given by \eqref{eq:GF} and utilizing the Lojasiewicz gradient inequality \cite{L}. This type of convergence has already been shown in \cite{RT} for a similar system of opinion dynamics which will be studied in the following section. While the exponentially decaying term is appropriately dealt with in \cite{LRS}. We refer the reader to these sources for details.
\end{proof}

\begin{remark}
    Note that we did not treat the edge cases between Lemmas \ref{msad} and \ref{l:actfullsynch}. Indeed, these transitions represent the phase transition from having an Active synchronized state to an Amplitude Death synchronized state. 
\end{remark}

The results of this section provide full synchronization for $\omega$-homogeneous oscillators, but for initial data confined to one half-plane. In the final section we study the restriction of Stuart-Landau oscillators to the real line.

\section{Stuart-Landau on the Real line: A model of Opinion dynamics}\label{S:opinion}
The real-valued version of the Stuart-Landau model \eqref{eq:SL} reads
\begin{align}\label{eq:SLopinion1}
    \ddt x_j=(\a_j-x_j^2)x_j+\frac{\k}{N}\sum_{l=1}^N(x_l-x_j), \ \ j=1,..,N
\end{align}
with $\a_j \in \R$, $\k\geq0$ fixed and $x_j\in \R$.
In \cite{RT}, the system \eqref{eq:SLopinion} is proposed as a nonlinear model of opinion dynamics. In that model, the inherent amplitudes are used to model stubbornness parameters that anchored an opinion to that particular positive value. It was introduced as a nonlinear consensus model in line with the famous Degroot \cite{DeGroot} and Friedkin-Johnsen models \cite{FJ}. Within both \cite{RT,LRS} and the previous Section \ref{S:hom}, a restriction on the initial conditions leads to convergence to a positively oriented stable fixed point which is unique within that region. Such a fixed point is a Nash Equilibrium: a point where agents do not necessarily agree, but no agent can do better in the dynamical opinion game by changing their opinion. However, if we allow initial data to take both positive and negative values, we can see other stable asymptotic states can exist in weak coupling regimes. Further, allowing for the existence of subcritical oscillators (agreeable agents, $\a_j\leq 0$), then consensus in amplitude death can be recovered despite heterogeneity of stubbornness parameters.

First let us recall what the potential asymptotic states are of the model \eqref{eq:SLopinion1}.

\begin{definition}[Disagreement, Compromise, and Consensus]
    The asymptotic states $x_j^{\infty} \in \R$ of \eqref{eq:SLopinion1} can be characterized as
    \begin{itemize}
        \item Disagreement: If there exists $j,k$ such that $x_j^{\infty}<0$ and $x_k^{\infty}>0$.
        \item Compromise: If for all $j=1,..,N$, $x_j^{\infty}>0$ $(x_j^{\infty}<0)$ and there exists $k,l$ such that $x_k^{\infty}\neq x_l^{\infty}$.
        \item Consensus: If for all $j=1,...,N$, $x_j^{\infty}\equiv c$.
        \begin{itemize}
            \item Balanced Consensus: If $c=0$.
        \end{itemize}
    \end{itemize}
\end{definition}

Further we recall below the main result from \cite{RT} adapted to the current model in \eqref{eq:SLopinion1}:

\begin{theorem}[\cite{RT}]\label{t:opinion2}
For any set of parameters $\{\a_j\}_{j=1}^N$, such that $\a_j>0$, there exists a unique steady state $\bx^*\in \R^N_+$ to \eqref{eq:SLopinion} which is a locally exponentially stable Nash equilibrium. Moreover, any solution $\bx(t)\in \R^N_+$ with $\bx(0)\in \R^N_+$ converges to the unique Nash equilibrium.
\end{theorem}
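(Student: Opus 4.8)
The plan is to recognize that the real-valued system \eqref{eq:SLopinion1} is exactly the amplitude system \eqref{eq:ramps} analysed in Section~\ref{S:hom}, restricted to nonnegative data, and to harvest the existence, uniqueness and stability machinery already developed there. First I would record the gradient-flow structure: \eqref{eq:SLopinion1} is the negative gradient flow $\dot{\bx} = -\nabla P(\bx)$ of the real-analytic potential
\[
P(\bx) = \frac{\k}{4N}\sum_{i,j=1}^N (x_i - x_j)^2 + \frac14 \sum_{i=1}^N x_i^4 - \frac12 \sum_{i=1}^N \a_i x_i^2 ,
\]
which is the one-dimensional instance of \eqref{eq:GF} with $\bE \equiv 0$ (on the real line there is no phase to synchronise, so the exponentially small remainder is absent). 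Evaluating the vector field on the face $\{x_j = 0\}$ gives $\dot{x}_j = \frac{\k}{N}\sum_l x_l > 0$ whenever the other coordinates are positive, so the open orthant $\R^N_+$ is forward-invariant; together with the one-sided bound $\dot{x}_j \le (\a_j - x_j^2)x_j$ this confines every positive trajectory to a compact absorbing set on which eventually $x_j \le \sqrt{\a_+}$.

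Next I would invoke Lemma~\ref{l:actfullsynch} verbatim. Because $\a_j > 0$ for every $j$, the hypothesis $\sum_j \a_j > 0$ holds automatically, so we are always in the Active case: there is a unique fixed point $\bx^* \in \R^N_+$, and at every positive fixed point the Jacobian $D_{\bx}F$ satisfies $\det D_{\bx}F > 0$ with all leading principal minors positive. This follows by substituting the fixed-point relation $x_j^2 = \a_j - \k + \frac{\k}{N}\sum_l R_{lj}$ into $g_j = \k + 3x_j^2 - \a_j$ and checking $\frac{\k}{N}\sum_j g_j^{-1} = \frac13 < 1$, exactly as in that lemma. Since $D_{\bx}F = D^2 P$, this positive-definiteness is precisely negative-definiteness of the linearisation of the flow, yielding local exponential stability. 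The stationarity $\partial_{x_j} p_j(\bx^*) = 0$ for the payout \eqref{eq:pNFJ}, combined with $\partial^2_{x_j} p_j(\bx^*) > 0$, identifies $\bx^*$ as a locally stable Nash equilibrium of the opinion game.

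Finally, for the global convergence claim I would use that each positive trajectory remains in the compact absorbing set above and that $P$ is real-analytic; the {\L}ojasiewicz gradient inequality \cite{L} then forces every trajectory to converge to a single critical point of $P$ in $\overline{\R^N_+}$, the boundary computation of the first paragraph excludes the faces $\{x_j=0\}$, and uniqueness pins the limit to $\bx^*$. The step I expect to be the main obstacle is uniqueness: $P$ is \emph{not} convex, so uniqueness cannot be read off from convexity and instead rests on the Brouwer degree argument of Lemma~\ref{l:actfullsynch}. Its delicate point is verifying that no fixed point escapes through $\partial \cW$ along the homotopy $\tau\boldsymbol{\a} + (1-\tau)\hat{\boldsymbol{\a}}$ toward a homogeneous profile, so that $\mathrm{deg}\{F,\cW,\mathbf{0}\} = 1$ is preserved and the single computed sign $+1$ forces exactly one preimage.
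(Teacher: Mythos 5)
Your route is essentially the one the paper itself takes: Theorem~\ref{t:opinion2} is recalled from \cite{RT} without an independent proof, and the machinery you invoke (the gradient flow $\dot{\bx}=-\nabla P$, the Jacobian/principal-minor computation and the Brouwer-degree homotopy of Lemma~\ref{l:actfullsynch}, and the {\L}ojasiewicz argument for convergence) is precisely the parallel development in Section~\ref{S:hom}, so there is no genuine divergence of method. One concrete gap remains in your global-convergence step: the {\L}ojasiewicz inequality forces convergence to a single critical point of $P$ in $\overline{\R^N_+}$, and your face computation only rules out boundary points with at least one \emph{positive} coordinate; it does not exclude the origin, which \emph{is} a critical point of $P$ ($\nabla P(\mathbf{0})=0$) and lies in $\overline{\R^N_+}$, so a priori a positive trajectory could limit there along its stable set. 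The fix is one line: writing $x_-=\min_j x_j$ and using that the coupling term is nonnegative at the minimizer gives $\dot{x}_-\geq(\min_j\a_j-x_-^2)\,x_-$, so for $\bx(0)\in\R^N_+$ one has $\liminf_{t\to\infty}x_-(t)\geq\sqrt{\min_j\a_j}>0$; this keeps the trajectory uniformly away from all of $\partial\R^N_+$ (and incidentally supplies the compact set $\cW$ on which the degree count applies), after which uniqueness pins the limit to $\bx^*$ as you say.
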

 The original result is more general incorporating more aspects of opinion dynamics as well as allowing for a generic underlying graph topology of interactions between the oscillators. However, for the purpose of this work we focus on the case of fully interacting oscillators (corresponding to an underlying complete graph topology) and limit the number of the model's parameters to those relevant for the current study, characterizing model \eqref{eq:SLopinion}. 

The above result assumes positive initial data $x_j(0)$ and positive stubborness parameters $\alpha_j> 0$. The Nash equilibrium is a Compromise state if there is $\alpha$-heterogeneity, and Consensus if all $\alpha_j\equiv\a>0$. The exclusion of negative initial data along with the sectorial invariance of the previous section precluded the possibility of Disagreement states. In what follows, instead, we will study the case of both positive and negative initial values, $x_j(0) \in \R$, as well as the inclusion of nonpositive stubborness parameter values, $\a_j \in \R$. Values $\a_j\leq 0$ are considered agreeable or (moderates) as the forcing mechanism $(\a_j-x_j^2)x_j$ always drives the opinion value towards zero. In this way we will see the existence of stable Disagreement states as well as the recovery of a Consensus state while having $\a$-heterogeneity.

In the next subsection we see the effect of allowing for negative initial data, while for the moment continuing to have $\a_j>0$.

\subsection{Negative initial data: the two agents case} Let us begin with a simple situation involving only two agents with the same stubbornness parameter $\a_1=\a_2=\a>0$. Model \eqref{eq:SLopinion} then becomes
\begin{align}
    \ddt x_1&=(\a-x_1^2-\frac{\k}{2})x_1+\frac{\k}{2}x_2,\label{2coup1}\\
    \ddt x_2&=(\a-x_2^2-\frac{\k}{2})x_2+\frac{\k}{2}x_1.\label{2coup2}
\end{align}
Now for $\k=0$ the system is uncoupled and $x_1,x_2$ behave independently, and converge to one of their 3 possible equilibria, respectively, i.e. there are nine equilibria given by $(\pm\sqrt{\a}$, $\pm\sqrt{\a})$, $(\pm\sqrt{\a},0)$, $(0,\pm\sqrt{\a})$, $(0,0)$, for the following uncoupled system,
\begin{align}
    \ddt x_1&=(\a-x_1^2)x_1,\label{2uncoup1}\\
    \ddt x_2&=(\a-x_2^2)x_2.\label{2uncoup2}
\end{align}
Four of these fixed points are stable, $(\pm\sqrt{\a},\pm\sqrt{\a})$, four are saddle points, $(\pm\sqrt{\a},0),(0,\pm\sqrt{\a})$, and $(0,0)$ is fully unstable. Indeed the Jacobian is given by
\[
J(\bx)=\begin{bmatrix}
\a-3x_1^2 & 0 \\
0 & \a-3x_2^2 \\
\end{bmatrix}.
\]
Plugging in each of the nine fixed points grants the stability for each of them. Further, as $\det{J(\bx)}\neq 0$ for each fixed point, they are all hyperbolic fixed points. Thus considering $\k$ small as a perturbation, we know that all nine of the fixed points of of \eqref{2uncoup1}-\eqref{2uncoup2} are continuously shifted with the perturbation parameter $\k$ so that there are also nine fixed points of \eqref{2coup1}-\eqref{2coup2} which all retain the stability of the original fixed points, \cite{HKBook}. However, note that any fixed point satisfies
\begin{align}
    x_1(\k-\a+x_1^2)=\frac{\k}{2}(x_1+x_2)=x_2(\k-\a+x_2^2),
\end{align}
which for $\k>\a$, implies $\mathrm{sgn}(x_1)=\mathrm{sgn}(x_2)$. Thus there must exist at least one value $\k'(\a)\leq \a$ such that a bifurcation occurs eliminating the six fixed points for which $\mathrm{sgn}(x_1)\neq\mathrm{sgn}(x_2)$. In fact we will find two bifurcation values $\k_1'(\a)<\k_2'(\a)$. The following lemma provides us with the desired information about the bifurcations.
\begin{lemma}\label{l:2bifurcate}
    Let $\{x_1,x_2\}$ be a solution to \eqref{2coup1}-\eqref{2coup2}. Then if $\k<\frac{2}{3}\a$ there are exactly nine fixed points, of which four are stable, four are saddle points and one is repulsive. The system undergoes a pair of triple saddle-node bifurcations at $\k_1'(\a)=\frac{2}{3}\a$ where all six fixed points for which $\mathrm{sgn}(x_1)\neq\mathrm{sgn}(x_2)$ meet such that $x_j^2(\k_1')=\frac{1}{3}\a$. A second bifurcation occurs at $\k_2'(\a)=\a$ where the now two fixed points such that $\mathrm{sgn}(x_1)\neq\mathrm{sgn}(x_2)$ collapse to the zero fixed point and for $\k>\k'$ there exist only the three fixed points for which $\mathrm{sgn}(x_1)=\mathrm{sgn}(x_2)$ where the nonzero ones are stable and the zero fixed point is unstable.
\end{lemma}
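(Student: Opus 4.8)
The plan is to bypass the perturbative persistence count already sketched in the text and instead solve the stationary system exactly, exploiting the $\mathbb{Z}_2\times\mathbb{Z}_2$ symmetry of \eqref{2coup1}--\eqref{2coup2} under the swap $(x_1,x_2)\mapsto(x_2,x_1)$ and the global sign flip $(x_1,x_2)\mapsto(-x_1,-x_2)$. Setting both right-hand sides to zero and then adding, respectively subtracting, the two equations factors the fixed-point locus cleanly:
\begin{align}
(x_1+x_2)\left(x_1^2-x_1x_2+x_2^2-\a\right)&=0,\\
(x_1-x_2)\left(x_1^2+x_1x_2+x_2^2-(\a-\k)\right)&=0.
\end{align}
Every equilibrium satisfies one factor from each line, which splits the analysis into four cases according to which factor vanishes, and it is this factorization that makes the whole bookkeeping tractable.

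First I would enumerate the solutions case by case. Both linear factors vanishing gives the origin. The branch $x_1-x_2=0$ together with $x_1^2-x_1x_2+x_2^2=\a$ gives the two symmetric equilibria $(\pm\sqrt{\a},\pm\sqrt{\a})$, present for every $\k$ with $\mathrm{sgn}(x_1)=\mathrm{sgn}(x_2)$. The branch $x_1+x_2=0$ with $x_1^2+x_1x_2+x_2^2=\a-\k$ yields the antisymmetric pair $(\pm\sqrt{\a-\k},\mp\sqrt{\a-\k})$, real precisely for $\k<\a$ and collapsing onto the origin as $\k\nearrow\a$; this pins down $\k_2'(\a)=\a$. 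The genuinely off-axis equilibria arise when both quadratic factors vanish: subtracting and adding them gives $x_1x_2=-\tfrac{\k}{2}$ and $x_1^2+x_2^2=\a-\tfrac{\k}{2}$, whence $(x_1+x_2)^2=\a-\tfrac{3\k}{2}$, producing four real (and pairwise distinct) solutions exactly for $\k<\tfrac23\a$. This identifies $\k_1'(\a)=\tfrac23\a$, and counting over the regimes gives nine equilibria for $\k<\tfrac23\a$, five for $\tfrac23\a<\k<\a$, and three for $\k>\a$.

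For the stability classification I would use that the Jacobian
\[
J=\begin{bmatrix} \a-3x_1^2-\tfrac{\k}{2} & \tfrac{\k}{2}\\ \tfrac{\k}{2} & \a-3x_2^2-\tfrac{\k}{2}\end{bmatrix}
\]
is symmetric, so all eigenvalues are real and stability follows from trace and determinant. At the origin the eigenvalues are $\a$ and $\a-\k$ (repelling for $\k<\a$, saddle for $\k>\a$); at the symmetric corners they are $-2\a$ and $-2\a-\k$ (always stable); at the antisymmetric pair they are $3\k-2\a$ and $2(\k-\a)$, hence stable for $\k<\tfrac23\a$ and saddles for $\tfrac23\a<\k<\a$. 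On the off-axis branch, substituting $x_1^2+x_2^2=\a-\tfrac{\k}{2}$ and $x_1^2x_2^2=\tfrac{\k^2}{4}$ gives $\det J=2\big(\k^2-(\a-\tfrac{\k}{2})^2\big)=2\big(\tfrac{3\k}{2}-\a\big)\big(\tfrac{\k}{2}+\a\big)<0$ for $\k<\tfrac23\a$, so those four equilibria are saddles. This reproduces the stated tally of four stable, four saddle and one repelling equilibrium below $\k_1'$.

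The delicate point, and the one I expect to require the most care, is the precise description of the collision at $\k_1'=\tfrac23\a$. I would verify that as $\k\nearrow\tfrac23\a$ the four off-axis saddles and the two antisymmetric nodes all tend to the two sites $(\pm\sqrt{\a/3},\mp\sqrt{\a/3})$: on the off-axis branch $(x_1+x_2)^2\to0$ together with $x_1^2+x_2^2\to\tfrac23\a$ forces $x_j^2\to\tfrac13\a$, while $\a-\k\to\tfrac13\a$ gives the identical limit for the antisymmetric pair, confirming $x_j^2(\k_1')=\tfrac13\a$. A short local expansion (e.g. writing $x_1+x_2=\pm\varepsilon$) shows that at each site exactly two off-axis saddles and one antisymmetric node coalesce, so the six different-sign equilibria merge in two triples; the simultaneous vanishing of $\det J$ on the off-axis branch and of the eigenvalue $3\k-2\a$ on the antisymmetric branch certifies that this is a genuine, symmetry-folded saddle-node collision rather than a transversal crossing. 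The main obstacle is thus not the algebra but arguing that the coalescence is exactly threefold at each of the two symmetric sites and that no equilibria are created or lost elsewhere, which the clean factorization above resolves by assigning every equilibrium to a definite case and making the merging of branches explicit.
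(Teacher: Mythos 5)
Your proof is correct and establishes every claim of the lemma, but by a somewhat different and more self-contained route than the paper's. The paper only \emph{adds} the two stationarity equations, obtaining the single factorization $(x_1+x_2)\bigl(\alpha-(x_1^2-x_1x_2+x_2^2)\bigr)=0$, and then leans on hyperbolic persistence of the nine equilibria of the uncoupled ($\kappa=0$) system to know that nine fixed points exist for small $\kappa$, locating the bifurcations by searching the line $x_1=-x_2$ and the ellipse for points where $\det J=0$. You additionally \emph{subtract} the equations, which produces the second factor $(x_1-x_2)\bigl(x_1^2+x_1x_2+x_2^2-(\alpha-\kappa)\bigr)$ and turns the fixed-point locus into an exactly solvable four-case intersection problem: this yields the precise equilibrium count ($9$, $5$, $3$) in each $\kappa$-regime with no perturbation argument, exhibits the four off-axis saddles in closed form via $x_1x_2=-\kappa/2$ and $x_1^2+x_2^2=\alpha-\kappa/2$, and makes the two triple collisions at $(\pm\sqrt{\alpha/3},\mp\sqrt{\alpha/3})$ transparent because each merging branch is written down explicitly. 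Your stability classification via the symmetric Jacobian coincides with the paper's; your factored form of $\det J$ on the off-axis branch agrees with the paper's expression $\alpha^2-\alpha\kappa-3(\alpha-\tfrac{\kappa}{2})(x_1^2+x_2^2)+9x_1^2x_2^2$ after substituting the branch relations. What the paper's route buys is brevity and a template (hyperbolic continuation plus degeneracy search) that still functions for $N>2$ agents, where no exact factorization is available; what yours buys is an unconditional equilibrium count valid for all $\kappa>0$ and an explicit verification that no equilibria are created or destroyed away from the two named bifurcation values, which the paper's persistence argument only guarantees for $\kappa$ small.
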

\begin{proof}
    As the nine fixed points of \eqref{2uncoup1}-\eqref{2uncoup2} are hyperbolic, it is guaranteed for small values of $\k$ that these nine fixed points are preserved along with their stability. Let us now seek a value $\k'$ such that the hyperbolicity breaks down, i.e. fixed points such that at $\k'$ we have
    \begin{align}
        0=(\a-x_1^2-\frac{\k}{2})x_1+\frac{\k}{2}x_2,& \ \ \ \ \ 
    0=(\a-x_2^2-\frac{\k}{2})x_2+\frac{\k}{2}x_1,\label{bifurcate1}\\
    \det J(\bx)&=0,\label{bifurcate2}
    \end{align}
    where for the coupled system we have
    \[
J(\bx)=\begin{bmatrix}
\a-3x_1^2-\frac{\k}{2} & \frac{\k}{2} \\
\frac{\k}{2} & \a-3x_2^2-\frac{\k}{2} \\
\end{bmatrix}.
\]
Let us begin with \eqref{bifurcate1}. Note that $x_1=x_2=0$ is always a fixed point. Now by adding the two equations there we get the following condition for a fixed point.
\begin{align}\label{fpprod}
    0=(x_1+x_2)(\a-(x_1^2-x_1x_2+x_2^2)).
\end{align}
The first part of the product prompts us to investigate what happens when $x_1=-x_2$. Returning to the first equation of \eqref{bifurcate1} yields
\begin{align}
    0=(\a-\k-x_1^2)x_1,
\end{align}
which gives $x_1^2=\a-\k=x_2^2$ (or $x_1^2=0=x_2^2$ which has already been accounted for). Therefore we get the two points $(\pm \sqrt{\a-\k},\mp \sqrt{\a-\k})$ while $\k<\a$. Note that this automatically yields the bifurcation value $\k_2'(\a)=\a$ for when these fixed points collide with the zero fixed point.\\

Returning to \eqref{fpprod} the second part of the product gives the equation for an ellipse,
\begin{align}\label{ellipse}
    \a=x_1^2-x_1x_2+x_2^2.
\end{align}
The ellipse has been rotated $\frac{\pi}{4}$ radians so that the vertex of the main axis is found at $x_1=\pm\sqrt{\a}=x_2$ and the vertex of the minor axis at $x_1=\pm\sqrt{\frac{\a}{3}}=\mp x_2$. This gives the first bifurcation value $\k_1'(\a)=\frac{2\a}{3}$ by plugging in $x_1^2=\frac{\a}{3}$ and $x_1=-x_2$ into \eqref{bifurcate1},
\begin{align}
    0=(\frac{2}{3}\a-\k)\sqrt{\frac{\a}{3}}.
\end{align}

Therefore, any fixed point must be found either on the ellipse \eqref{ellipse}, or at the two points where $(x_1,x_2)=(\pm \sqrt{\a-\k},\mp \sqrt{\a-\k})$ while $\k<\a$, or at $x_1=x_2=0$. Note that all nine of the fixed points of \eqref{2uncoup1}-\eqref{2uncoup2} can be found there as well. Note that at the points we computed we still need to verify that \eqref{bifurcate2} is satisfied.\\

Computing the determinant  yields
\begin{align}
    \det J(\bx)=\a^2-\a\k-3(\a-\frac{\k}{2})(x_1^2+x_2^2)+9x_1^2x_2^2.
\end{align}
Plugging in the values found at $\k_1'(\a)=\frac{2\a}{3}$ with $x_1^2=\frac{\a}{3}$ and $x_1=-x_2$ as well as at $\k_2'(\a)=\a$ with $x_1=x_2=0$ both grant $\det J(\bx)=0$, while any other point on the ellipse or the line $x_1=-x_2$ yield $\det J(\bx)\neq 0$. Therefore there are exactly two bifurcation points and for $\k>\a$ the only fixed points are the stable Nash equilibria from Theorem \ref{t:opinion2} representing consensus at $(x_1,x_2)=(\pm \sqrt{\a},\pm \sqrt{\a})$ and the unstable zero fixed point.
\end{proof}
\begin{figure}
    \centering
    \includegraphics[width=0.75\linewidth]{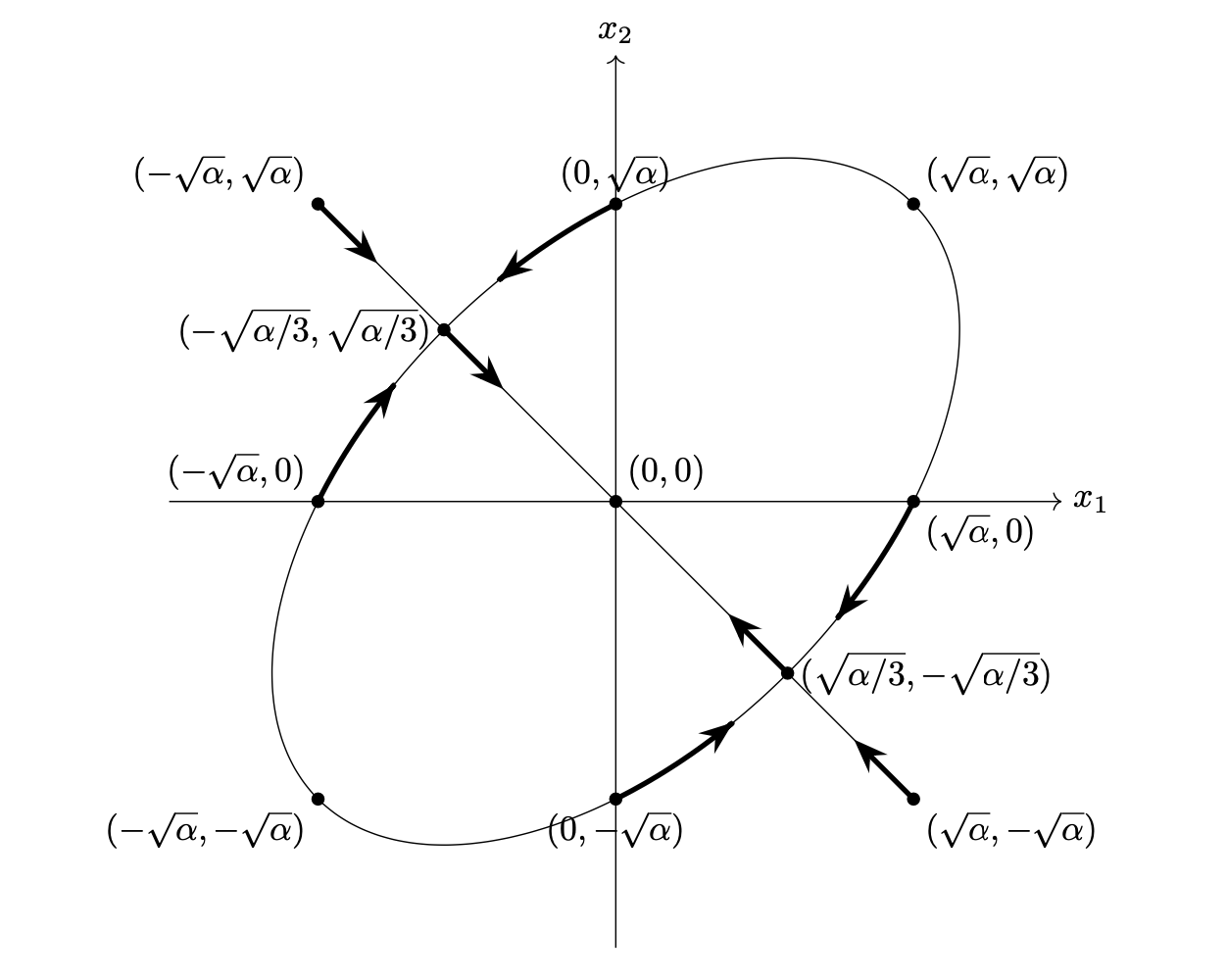}
    \caption{This figure demonstrates the evolution of the the nine fixed points of \eqref{2coup1}-\eqref{2coup2} as $\k$ increases from $\k=0$ to $\k=\a$ via the two bifurcation points $\k_1'$ and $\k_2'$. The fixed points $(\sqrt{\a},\sqrt{\a})$ and $(-\sqrt{\a},-\sqrt{\a})$ are stable and never move for any value of $\k>0$. The fixed point $(0,0)$ is unstable and never moves as well. The fixed points $(-\sqrt{\a},\sqrt{\a})$ and $(\sqrt{\a},-\sqrt{\a})$ are also stable, but as $\k$ increases they move along the line $x_1=-x_2$. Meanwhile the four saddle points which begin on each of the axes at distance $\sqrt{\a}$ away from the origin move along the ellipse given by \eqref{ellipse} in the direction of the line $x_1=-x_2$. The first bifurcation occurs at $\k_1'(\a)=\frac{2\a}{3}$ where the two saddle points in the second quadrant collide with the stable node at $(-\sqrt{\a/3},\sqrt{\a/3})$ producing one saddle point. And similarly in the fourth quadrant. The two remaining saddle points continue along the line $x_1=-x_2$ until at $\k_2'(\a)=\a$ it reaches the point $(0,0)$ and becomes a fully unstable fixed point.}
    \label{fig:Bifurcation_Ellipse}
\end{figure}
Figure \ref{fig:Bifurcation_Ellipse} gives the evolution of the fixed points as $\k$ increases giving rise to the two bifurcation points $\k_1'=\frac{2\a}{3}$ and $\k_2'=\a$.

\subsection{The general case with $N$ agents}
If all $\a_j>0$, then the previous argument for $N=2$ still holds, yielding the existence of multiple stable fixed points which are Disagreement states, where some $x_j<0$ and some $x_j>0$. Indeed, as the perturbation breaks down as the coupling strength increases, the basins of attraction of these fixed points shrink to zero and collide with the saddle points. There is a cascade of saddle-node bifurcations until for $\k\geq\a_{+}$ the only fixed points are the two stable positively (negatively) oriented Compromise (Consensus if all $\a_j\equiv \a$) states and the unstable repulsive Consensus state at zero.

However, letting $\a_j$ take negative values as well can lead to more diverse outcomes. In the context of opinion dynamics, $\a_j>0$ represents stubborn agents that want to keep their initial opinion values as close to $\sqrt{\a_j}$ if initially positive and $-\sqrt{\a_j}$ if initially negative. Meanwhile $\a_j< 0$ would indicate an agent that is not stubborn, but agreeable and always wants to be in the center.

Let us now investigate the linearization around the fixed points.

In this case we let the fixed point map be
\begin{align}
    F_j(\bx)=(\a_j-x_j^2-\k)x_j+\frac{\k}{N}\sum_{l=1}^Nx_l.
\end{align}
Then the Jacobian matrix is given  by
\[
D_{\bx}F(\bx)=\begin{bmatrix}
\a_1-3x_1^2-\frac{\k(N-1)}{N} & \frac{\k}{N} & \dots & \frac{\k}{N}\\
\frac{\k}{N} & \a_2-3x_2^2-\frac{\k(N-1)}{N} &  & \vdots \\
\vdots &  & \ddots & \vdots \\
\frac{\k}{N} & \dots & \dots & \a_N-3x_N^2-\frac{\k(N-1)}{N}\\
\end{bmatrix}
\]
\[
=\begin{bmatrix}
\a_1-3x_1^2 &  &  & \\
 & \a_2-3x_2^2 &  & \\
 &  & \ddots &  \\
 & &  & \a_N-3x_N^2\\
\end{bmatrix}-\frac{\k}{N}\begin{bmatrix}
N-1 & -1 & \dots & -1\\
-1 & N-1&  & \vdots \\
\vdots &  & \ddots & \vdots \\
-1 & \dots & \dots & N-1\\
\end{bmatrix}=A-E.
\]

Now, for the uncoupled system with $\k=0$, the eigenvalues are given by $\a_j-3x_j^2$, where the stable fixed points are achieved if for $\a_j>0$, then $x_j=\pm\sqrt{\a_j}$, and for $\a_j<0$, then $x_j=0$. Indeed, if $\a_j<0$, then the only steady state for an uncoupled oscillator is $x_j=0$, as this is the subcritical regime of the pitchfork bifurcation.

For $\k$ small, we know that as long as $\a_j\neq 0$ that every fixed point is hyperbolic and the existence and stability is retained up to each saddle-node bifurcation like the one in Lemma \ref{l:2bifurcate}. 

For general $N$ and with $\a$-heterogeneity it becomes too unwieldy to find all of the individual bifurcations. However, we can give a lower bound on when the final bifurcation has occurred.

\begin{lemma}\label{l:k*}
    There exists a $\k^*\geq 0$ such that for all $\k>\k^*$, the stable asymptotic state is either,
    \begin{itemize}
        \item Consensus: All $x_j\equiv 0$, if $\sum_{j=1}^N \a_j<0$,
        \item Compromise: All $x_j>0$ $(x_j<0)$, if $\sum_{j=1}^N \a_j>0$,
    \end{itemize}
     The Compromise state further can be a positive (negative) Consensus state if $\a_j\equiv \a>0$ for all $j=1,...,N$. Further, if $\a_1\leq 0$, then $k^*=0$ and Consensus at $x_j\equiv 0$ is guaranteed, while if $\max_j|\a_j|=\a_1>0$, then at the least $0<\k_1^*\leq\a_1$ where $\k_1^*$ represents the final bifurcation for the disappearance of Disagreement states. If $\sum_{j=1}^N\a_j>0$, then this agrees with the final bifurcation value $\k_1^*=\k^*$, while if $\sum_{j=1}^N\a_j<0$, then there is one further bifurcation where the stable Compromise states fall into the Consensus state with a final pitchfork bifurcation.
\end{lemma}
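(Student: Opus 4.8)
The plan is to reduce the asymptotic classification to a classification of the equilibria of \eqref{eq:SLopinion1}. Since the dynamics form a gradient flow $\dot{\bx}=-\nabla P(\bx)$ with the coercive, real-analytic (indeed polynomial) potential $P(\bx)=\frac{\k}{4N}\sum_{i,j}(x_i-x_j)^2+\frac14\sum_i x_i^4-\frac12\sum_i\a_i x_i^2$, coercivity bounds every trajectory and the Lojasiewicz inequality (as in \cite{RT} and Theorem~\ref{t:N>2synch2}) forces convergence to a single equilibrium. It therefore suffices to locate the fixed points and decide their stability as $\k$ varies. Writing $\bar x=\frac1N\sum_l x_l$, every fixed point of \eqref{eq:SLopinion1} satisfies the scalar relation
\begin{align}
x_j\left(\k+x_j^2-\a_j\right)=\k\bar x,\qquad j=1,\dots,N.\label{eq:planfp}
\end{align}
I would read off the sign structure and the bifurcations directly from \eqref{eq:planfp}, invoking the stability dichotomy already established in Lemmas~\ref{msad} and \ref{l:actfullsynch}, together with the reflection symmetry $F_j(-\bx)=-F_j(\bx)$.

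First I would eliminate the Disagreement states. Let $\a_+=\max_j\a_j$. For $\k>\a_+$ the factor $\k+x_j^2-\a_j$ in \eqref{eq:planfp} is strictly positive for every $j$, so $\mathrm{sgn}(x_j)=\mathrm{sgn}(\bar x)$ for all $j$; if $\bar x=0$ this forces $\bx=0$. Hence no equilibrium can have coordinates of opposite sign once $\k>\a_+$, giving the upper bound $\k_1^*\le\a_+\le\max_j|\a_j|=\a_1$ on the final Disagreement bifurcation. Positivity $\k_1^*>0$, when at least two agents are supercritical so that genuine mixed-sign equilibria exist at $\k=0$, follows because the uncoupled system has hyperbolic equilibria of opposite signs which persist, together with their stability, for small $\k$; this is the mechanism analyzed for $N=2$ in Lemma~\ref{l:2bifurcate}, and the resulting cascade of saddle--node bifurcations that removes them must terminate by $\k=\a_+$.

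Next I would settle the Consensus-versus-Compromise alternative. For $\k>\a_+$ every surviving equilibrium is sign-definite, hence (after the reflection $\bx\mapsto-\bx$) lies in the nonnegative orthant and is exactly a fixed point of the half-line system governed by Lemmas~\ref{msad} and \ref{l:actfullsynch}. The decisive quantity is $S(\k)=\sum_j\frac{\a_j}{\k-\a_j}$, and the key observation is that $\k S(\k)=\sum_j\a_j+\sum_j\frac{\a_j^2}{\k-\a_j}$ is strictly decreasing on $(\a_+,\infty)$, from $+\infty$ down to $\sum_j\a_j$. Thus if $\sum_j\a_j\ge0$ then $S(\k)>0$ throughout, so Lemma~\ref{l:actfullsynch} yields the unique positive Active state (a Compromise, degenerating to Active Consensus at $\pm\sqrt{\a}$ when $\a_j\equiv\a>0$ by \eqref{eq:planfp}) and its reflection as the only stable states, with $\bx=0$ unstable; here $\k^*=\k_1^*$. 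If $\sum_j\a_j<0$ then $\k S(\k)$ has a single zero $\k^*\in(\a_+,\infty)$: for $\k>\k^*$ we have $S(\k)<0$ and Lemma~\ref{msad} makes $\bx=0$ the unique stable Balanced Consensus, while for $\k_1^*<\k<\k^*$ a Compromise persists and collapses into $\bx=0$ at $\k^*$ through a pitchfork. In the remaining case $\a_+\le0$ a direct maximum-modulus argument on the raw equation (evaluate at the index of largest $|x_j|$; the reaction and coupling terms then carry the same strict sign unless $\bx=0$) shows $\bx=0$ is the unique equilibrium for every $\k>0$, so $\k^*=0$ and Balanced Consensus holds.

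The individual ingredients above are clean: the sign lemma, the half-line dichotomy of Lemmas~\ref{msad}/\ref{l:actfullsynch}, and the monotonicity of $\k S(\k)$ (which pins the Compromise-to-Consensus pitchfork at a single value) combine to give the stated thresholds with $\k_1^*\le\a_+\le\k^*$. The main obstacle is the Disagreement side for general heterogeneous $N$: the sign lemma delivers only the upper bound $\k_1^*\le\a_+$, and establishing that Disagreement is removed through a \emph{finite} cascade of saddle--node bifurcations, together with the precise positivity $\k_1^*>0$ across all degenerate configurations (e.g.\ vanishing or exactly one supercritical $\a_j$), is the part that resists a single closed-form description — unlike the Compromise/Consensus transition, which the strict monotonicity of $\k S(\k)$ determines exactly.
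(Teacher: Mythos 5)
Your proposal is correct and follows essentially the same route as the paper: the sign identity $x_j(\k+x_j^2-\a_j)=\k\bar x$ to rule out Disagreement equilibria for $\k>\max_j\a_j$, followed by the stability dichotomy of Lemmas~\ref{msad} and \ref{l:actfullsynch} (via Theorem~\ref{t:N>2synch2}) to decide between Compromise and Balanced Consensus according to the sign of $\sum_j\a_j$. Your observation that $\k S(\k)=\sum_j\a_j+\sum_j\frac{\a_j^2}{\k-\a_j}$ is strictly decreasing on $(\a_+,\infty)$ is a worthwhile supplement, since it shows the pitchfork value $\k^*$ solving $\sum_j\frac{\a_j}{\k^*-\a_j}=0$ is unique, a point the paper asserts without verification.
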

\begin{proof}
First let us see that if $\k>\max_j|\a_j|=\a_1$, then there exist only three possible fixed points, Consensus $(x_j\equiv 0)$, or the two compromise states such that all $x_j>0 \ (x_j<0)$ as determined in Theorem \ref{t:opinion2}.

Indeed, suppose that $\k>\a_1$ and consider the fixed point map for $x_1$ and $x_N$ respectively,
\begin{align}
    0=(\a_1-x_1^2)x_1+\frac{\k}{N}\sum_{l=1}^N(x_l-x_1),\\
    0=(\a_N-x_N^2)x_N+\frac{\k}{N}\sum_{l=1}^N(x_l-x_N),
\end{align}
letting $x_a=\frac{1}{N}\sum_{l=1}^Nx_l$ and rearranging terms yields,
\begin{align}
    x_1(k-\a_1+x_1^2)=kx_a=x_N(k-\a_N+x_N^2).
\end{align}
As $\k>\a_1\geq\a_N$, we see that $\mathrm{sgn}(x_1)=\mathrm{sgn}(x_N)=\mathrm{sgn}(x_j)$ for all $j=1,...,N$.

Therefore $\k_1^*\leq \max_j|\a_j|$ and Theorem \ref{t:N>2synch2} tells us that if $\sum_{j=1}^N \a_j>0$ then for $\k>\k_1^*$, there are exactly the two stable Compromise fixed points and the unstable Consensus at zero fixed point. On the other hand if $\sum_{j=1}^N \a_j<0$, then there exists a final $\k^*>\a_1\geq\k_1^*$ such that  $\sum_{j=1}^N \frac{\a_j}{\k^*-\a_j}=0$ at which the pitchfork bifurcation occurs and we are left only with the Consensus at zero fixed point.
\end{proof}

To see convergence to one of the stable fixed points one can apply a gradient flow argument similar to those in \cite{LRS,RT}, however due to the existence of multiple stable fixed points, the initial data plays a role as to which fixed point the system converges. Therefore the entirety of Theorem \ref{t:opinion1} has been proved. We provide here a corollary to further state the totality of results.

\begin{corollary}\label{c:opinion3}
    Let $N=N_1+N_2$ where there are $N_1$ agents with conviction parameter $\a_j>0$ and $N_2$ agents with $\a_j\leq 0$. Then in the weak coupling regime, $0<\k<\k_*$, where $\k_*$ represents the first saddle-node bifurcation, there exists $3^{N_1}$ fixed points of \eqref{eq:SLopinion1}, where $2^{N_1}$ are stable fixed points. If $N_1>0$, then of these stable fixed points, $2$ are representative of compromise where all $x_j>0$, $(x_j<0$, respectively), while the rest are disagreement fixed points. The remaining $3^{N_1}-2^{N_1}$ fixed points have exactly the $1$ unstable consensus at zero fixed point, and the rest as saddle point disagreement states. 
\end{corollary}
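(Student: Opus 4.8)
The plan is to treat the weak-coupling window as a regular perturbation of the fully uncoupled system $\k=0$, for which \eqref{eq:SLopinion1} splits into the $N$ scalar pitchfork equations $\ddt x_j=(\a_j-x_j^2)x_j$. For a supercritical agent $\a_j>0$ this scalar equation has the three equilibria $x_j\in\{0,\pm\sqrt{\a_j}\}$, with linearization $\a_j-3x_j^2$ equal to $\a_j>0$ at the origin (unstable) and to $-2\a_j<0$ at $\pm\sqrt{\a_j}$ (stable); for $\a_j<0$ the unique equilibrium is $x_j=0$ with linearization $\a_j<0$ (stable), and for the borderline $\a_j=0$ again only $x_j=0$, attracting along $\ddt x_j=-x_j^3$. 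First I would count product equilibria: each of the $N_1$ supercritical agents contributes a factor of three and each of the $N_2$ agents with $\a_j\le 0$ a factor of one, giving $3^{N_1}$ equilibria, of which $2^{N_1}$ are stable (each supercritical agent choosing one of its two stable branches). Since the uncoupled Jacobian is diagonal with the entries above, every equilibrium with all $\a_j\neq 0$ is hyperbolic; the one marginal entry coming from an $\a_j=0$ agent becomes a strictly stable direction $-3x_j^2-\tfrac{\k(N-1)}{N}<0$ as soon as $\k>0$, so it obstructs no continuation.

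Next I would run the continuation. By definition $\k_*$ is the first coupling value at which some equilibrium loses hyperbolicity through a saddle-node collision (the mechanism exhibited for $N=2$ in \lem{l:2bifurcate}, where $\k_*=\tfrac23\a$). Hence on $0<\k<\k_*$ all $3^{N_1}$ equilibria persist as hyperbolic equilibria of the coupled system and retain their number of unstable eigenvalues, by the standard persistence theorem for hyperbolic equilibria (see \cite{HKBook}). This already delivers the stated counts throughout the window: $3^{N_1}$ fixed points, $2^{N_1}$ of them stable and $3^{N_1}-2^{N_1}$ unstable.

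The remaining, and most delicate, step is the sign classification, indexed by the branch choice $\e\in\{+,-,0\}^{N_1}$ of the supercritical agents (the $\a_j\le0$ agents are pinned near $0$). For the $2^{N_1}$ stable states only $\pm$ branches occur; the two all-equal-sign configurations force every supercritical coordinate to share one sign, and I would show the coupling pulls the subcritical coordinates to that same sign, giving the two genuine Compromise states, whereas any mixed-sign stable configuration already contains coordinates of both signs and is a Disagreement state, accounting for the remaining $2^{N_1}-2$. Among the $3^{N_1}-2^{N_1}$ unstable states the single all-zero configuration is the exact equilibrium $x_j\equiv0$, unstable whenever $N_1>0$ because it inherits the positive eigenvalues $\simeq\a_j$; this is the unique Consensus-at-zero state. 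The crux is that each of the remaining $3^{N_1}-2^{N_1}-1$ unstable states is a Disagreement saddle: it has at least one supercritical agent on a $\pm\sqrt{\a_j}$ branch and at least one pinned at the unstable $0$, so it carries both a stable direction ($-2\a_j$) and an unstable direction ($+\a_j$) and is a saddle. To see that it is a Disagreement state I would linearize the fixed-point relation near a zero-pinned supercritical coordinate, obtaining $\bigl(\a_j-\tfrac{\k(N-1)}{N}\bigr)x_j=-\tfrac{\k}{N}\sum_{l\neq j}x_l+O(x_j^3)$; since $\a_j-\tfrac{\k(N-1)}{N}>0$ on the weak-coupling window, $x_j$ is repelled to the sign opposite that of the remaining mean, so a state whose nonzero supercritical coordinates share a sign nonetheless acquires a coordinate of the opposite sign, forcing Disagreement.

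I expect this last sign-reversal mechanism to be the main obstacle, since it is precisely the feature that distinguishes the present indefinite-$\a_j$, signed-data setting from the one-sided analyses of \thm{t:opinion2} and \thm{t:N>2synch2}: the instability of the origin for a supercritical agent drives it \emph{across} the mean rather than toward it. Once this is established the corollary follows by pairing the continuation counts with the sign labels; convergence of trajectories to these fixed points is inherited from the gradient-flow structure already invoked in \thm{t:N>2synch2} and in \cite{LRS,RT}.
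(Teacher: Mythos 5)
Your proposal is correct and follows essentially the same route as the paper: the corollary is obtained there by viewing small $\k$ as a regular perturbation of the uncoupled product system, counting $3^{N_1}$ hyperbolic equilibria ($2^{N_1}$ stable) that persist up to the first saddle-node bifurcation, exactly as you do. Your explicit sign-reversal computation for the zero-pinned supercritical coordinates, showing the saddles are genuine Disagreement states, is a detail the paper leaves implicit, and your handling of the marginal $\a_j=0$ direction is at least as careful as the paper's.
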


As a model of opinion dynamics the Stuart-Landau system is interesting as it allows for various asymptotic states depending on the parameters of the system. Indeed, the nonlinear nature allows for stable configurations that exhibit disagreement, compromise, or consensus in the weak coupling regime. While as the coupling strength increases, the ability to have stable disagreements falls away through a series of saddle-node bifurcations, leaving only the compromise and consensus states depending on if on average most agents are ``agreeable" $(\a_j<0)$ or ``stubborn" $(\a_j>0)$.\\


\section{Discussion}\label{S:discussion}

The purpose of this work was to establish a rigorous mathematical analysis of the fundamental SL model of synchronization allowing for full freedom of parameter values $(\a_j,\omega_j,\k) \in \R \times \R \times \R_+$. In accomplishing this task for the case of $N=2$ oscillators, a novel regime of phase-locking, \textit{leader-driven synchronization}, was discovered. This regime can be found when the coupling strength $\k$ is found between the values of the respective Hopf-parameters $\a_1$ and $\a_2$, in particular $\k/2\in[\a_2,\a_1]$, where the $2$ is seen explicitly because $N=2$. Within the \textit{leader-driven synchronization} regime there is a unique active stable phase-locked state for any values $\o_1,\o_2\in \R$, and thus for any $\g>0$. This is found in direct contrast to the $\a$-homogeneous case, and in models of synchronization which have undergone a phase-reduction, e.g. the famous Kuramoto model \cite{ABPRS,Kur}, and its higher-order generalizations, Lohe matrix model \cite{HK_rev,PP}, and Schr\"odinger-Lohe model \cite{AR,AR2}. Indeed, in each of these models, for any configuration of natural frequencies $\o_j \in S$, for the appropriate set $S$, there exists a $\k^* \in \R_+$ such that for all $\k>\k^*$ there exists a unique stable phase-locked state, where $\k^*=0$ if and only if there is $\o$-homogeneity so that $\o_j\equiv \w$ or $\g=0$. Conversely, this implies that for any $\k>0$ one can find a configuration of $\o_j$ (i.e. $\g$ large enough) such that no phase-locked state exists. However, in the \textit{leader-driven synchronization} regime, this is impossible. Of further interest is the ability to study the synchronous (or incoherent) behavior of oscillators undergoing amplitude death. In the $\alpha$-heterogeneous regime, we saw that amplitude death implies phase-locking, however this is not the case in the $\alpha$-homogeneous setting where incoherent phase dynamics can persist as amplitude death occurs. Investigation into the persistence of these regimes for $N>2$ will be the topic of future works.

Beyond the $N=2$ case, a rigorous convergence to the fully synchronized state for $\a$-heterogeneous, $\o$-homogeneous SL oscillators is provided under a half-plane initial data configuration. Further, the phase-transition for whether the synchronized state is active or converging to amplitude death is explicitly given.

Last, restricting SL oscillators to the real line ($\g=0)$ gives a model of opinion dynamics, which in low coupling regimes exhibits multistablitiy. Indeed, depending on $(\a_j,\k) \in \R \times \R_+$ there can be coexisting, stable disagreement, compromise, and consensus fixed points, a realistic result not seen in other continuous models of opinion dynamics like the Taylor and Abelson models \cite{PT1,PT2}. Increasing $\k$ gives rise to a series of saddle-node bifurcations which eliminate all disagreement fixed points, until only one (up to multiplication by -1) stable fixed point exists (either compromise or consensus depending on the distribution of $\{\a_j\}_{j=1}^N$).

\section*{Acknowledgments}

DNR and DP have been partially supported by the Modeling Nature (MNat) Research Unit, project QUAL21-011.
DNR has received funding from the National Science Centre, Poland, grant number 2023/50/A/ST1/00447.
DP has been supported by Grant C-EXP-265-UGR23 funded by Consejeria de Universidad, Investigacion e Innovacion  ERDF/EU Andalusia Program, and by Grant PID2022-137228OB-I00 funded by the Spanish Ministerio de Ciencia, Innovacion y Universidades, MICIU/AEI/10.13039/501100011033  ``ERDF/EU A way of making Europe''.
APM was supported by Grant No. PID2023-149174NB-I00 financed by the Spanish Ministry and Agencia Estatal de Investigaci\'on MICIU/AEI/10.13039/501100011033 and ERDF funds (European Union), and by the ’Ram\'on y Cajal’ program of the Spanish Ministry of Science and Innovation (Grant RYC2021-031241-I).
FT is partially funded by the PRIN-MUR project MOLE code 2022ZK5ME7, and by PRINPNRR project FIN4GEO within the European Union’s Next Generation EU framework, Mission 4, Component 2, CUP P2022BNB97.

The authors would also like to thank Mihail Hurmuzov for their technological skills in the transfer from chalkboard to digital files of Figures \ref{fig:Manifold_stability} and \ref{fig:Bifurcation_Ellipse}

\end{document}